\pgfplotsset{compat=1.18}
\theoremstyle{plain}
\newtheorem{theorem}{Theorem}
\newtheorem{lemma}[theorem]{Lemma}
\newtheorem{prop}[theorem]{Proposition}
\newtheorem{corollary}[theorem]{Corollary}
\theoremstyle{definition}
\newtheorem{definition}[theorem]{Definition}
\theoremstyle{remark}
\newtheorem{remark}[theorem]{Remark}
\newtheorem{question}[theorem]{Question}
\numberwithin{equation}{section}
\numberwithin{theorem}{section}
\newcommand{\br}{\overline}
\newcommand{\R}{\mathbb R}
\newcommand{\C}{\mathbb C}
\newcommand{\N}{\mathbb N}
\DeclareMathOperator{\dist}{{\mathrm{dist}}}
\DeclareMathOperator{\diam}{{\mathrm{diam}}}
\DeclareMathOperator{\inter}{{\mathrm{int}}}
\DeclareMathOperator{\Mod}{\mathrm{Mod}}
\DeclareMathOperator{\area}{\mathrm{Area}}
\DeclareMathOperator{\loc}{\mathrm{loc}}
\DeclareMathOperator{\st}{\mathrm{St}}
\DeclareMathOperator{\Ast}{\mathcal A\text{-}St}
\DeclareMathOperator{\A'st}{\mathcal{A}^{\prime}\text{-}St}
\DeclareMathOperator{\llc}{\mathit{LLC}}
\title{Characterization of quasispheres via smooth approximation}
\author{Dimitrios Ntalampekos}
\address{Department of Mathematics, Aristotle University of Thessaloniki, Thessaloniki, 54152, Greece.}
\email{dntalam@math.auth.gr}
\date{\today}
\keywords{Quasisphere, quasisymmetric, quasiconformal, doubling, linearly locally connected, Loewner, modulus, reciprocal, metric surface, Riemannian surface, simplicial complex, approximation of metric space, Gromov--Hausdorff convergence}
\subjclass[2020]{Primary 30L10, 30C65, 53C23; Secondary 30F10, 51F99, 53A05}
\begin{document}

	\begin{abstract}
	We prove that every two-dimensional quasisphere is the limit of a sequence of smooth spheres that are uniform quasispheres. In the case of metric spheres of finite area we provide necessary and sufficient geometric conditions for a quasisphere, involving the doubling property, linear local connectivity, the Loewner property, conformal modulus, and reciprocity. In particular, although an arbitrary quasisphere does not satisfy necessarily all of those geometric conditions, we prove that every quasisphere can be approximated by uniform quasispheres that are uniformly doubling, linearly locally connected, $2$-Loewner, reciprocal and satisfy a uniform bound for the modulus of annuli.
	\end{abstract}

\maketitle

\setcounter{tocdepth}{1}
\tableofcontents

\section{Introduction}

\subsection{Background}
In this paper we provide characterizations of quasispheres involving geometric conditions and approximation by smooth spaces. For $n\geq 2$, an $n$-dimensional {quasisphere} is a metric space that can be mapped to the Euclidean $n$-sphere with a quasisymmetric homeomorphism (see Section \ref{section:prelim} for the definition). The class of quasisymmetric maps generalizes conformal homeomorphisms between Euclidean domains and can be defined on arbitrary metric spaces, even without the presence of a smooth structure. 

Quasispheres appear naturally in the areas of Analysis on Metric Spaces and Geometric Function Theory, but also in adjacent areas. For example, in Complex Dynamics they appear in the study of expanding Thurston maps. Specifically, it is shown in \cite{BonkMeyer:Thurston} that for an expanding Thurston map $f$ of the $2$-sphere, the associated visual metric on the $2$-sphere is quasisymmetric to the Euclidean metric if and only if the map $f$ is topologically conjugate to a rational map. Furthermore, a major open conjecture in Geometric Group Theory, namely Cannon's conjecture \cites{Sullivan:Cannon,Cannon:Conjecture,BonkKleiner:cannon}, asserts that the boundary at infinity of a Gromov hyperbolic group is always a quasisphere if it is topologically a $2$-sphere. Hence, there is a considerable amount of interest in understanding completely these objects, at least in the $2$-dimensional setting.

The most significant work on quasispheres so far is due to Bonk and Kleiner \cite{BonkKleiner:quasisphere}, who characterized Ahlfors $2$-regular quasispheres as \textit{linearly locally connected}, or else $\llc$, spheres (see Section \ref{section:geometric_notions}). Roughly speaking, this condition prevents cusps and dense wrinkles on a surface. The Bonk--Kleiner theorem has been an object of intense study over the past two decades and has been extended to other topologies \cites{Wildrick:parametrization, MerenkovWildrick:uniformization, GeyerWildrick:uniformization, Ikonen:isothermal, HakobyanRehmert:koebe}.

While the $\llc$ condition is necessary for a quasisphere, Ahlfors $2$-regularity is too restrictive. In fact, quasispheres can even have infinite area. Currently, we have very limited knowledge about quasispheres that are not Ahlfors $2$-regular or have infinite area, such as the aforementioned instances in Complex Dynamics and Geometric Group Theory. 

We start our investigation from smooth quasispheres, which are very well understood from a quantitative point of view. With the current techniques it is not hard to establish the following result (actually we prove a more general result, Theorem \ref{theorem:reciprocal} below). We refer the reader to Section \ref{section:geometric_notions} for the definitions of the metric notions appearing in the statement.

\begin{theorem}[Smooth quasispheres]\label{theorem:smooth}
Let $X$ be a Riemannian $2$-sphere. The following statements are quantitatively equivalent.
\begin{enumerate}[label=\normalfont{(\Alph*)}]
\item\label{smooth:quasisphere} $X$ is a quasisphere.
\item\label{smooth:llc-mod}
\begin{enumerate}[label=\normalfont{(\arabic*)}]
\item\label{smooth:llc} $X$ is a doubling and linearly locally connected space and
\item\label{smooth:modulus} there exist constants $L>1$ and $M>0$ such that for every ball $B(a,r)\subset X$ we have 
$$\Mod_2 \Gamma(\br B(a,r), X\setminus B(a,L r);X) <M.$$
\end{enumerate} 
\item\label{smooth:loewner} $X$ is a doubling and $2$-Loewner space. 
\end{enumerate}
\end{theorem}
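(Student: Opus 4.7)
The plan is to reduce all three equivalences to conformal uniformization together with the more general Theorem~\ref{theorem:reciprocal} announced in the introduction. The starting point is that every Riemannian $2$-sphere $X$ admits a conformal diffeomorphism $\phi\colon\mathbb{S}^2\to X$, which preserves the $2$-modulus of every curve family exactly. I would first observe that $X$ is a quasisphere if and only if this particular $\phi$ is quasisymmetric: one direction is trivial, and for the other, given any quasisymmetric $f\colon X\to\mathbb{S}^2$, the composition $f\circ\phi$ is a quasiconformal self-homeomorphism of the round $\mathbb{S}^2$, hence quasisymmetric by the classical theory on Ahlfors regular Loewner spaces, so $\phi=f^{-1}\circ(f\circ\phi)$ is quasisymmetric as well. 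This reduces every direction to understanding when $\phi$ is quasisymmetric in terms of the intrinsic geometric data on $X$.

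For $(A)\Rightarrow(C)$, doubling is inherited from $\mathbb{S}^2$ via the quasisymmetric invariance of doubling on compact spaces. For the $2$-Loewner property I would pull continua $E,F\subset X$ back to $\mathbb{S}^2$ via $\phi^{-1}$, use quasisymmetry to bound the ratio $\dist(\phi^{-1}E,\phi^{-1}F)/\min(\diam\phi^{-1}E,\diam\phi^{-1}F)$ in terms of the analogous ratio in $X$, apply the Loewner property of $\mathbb{S}^2$, and then transport the resulting lower modulus bound back to $X$ using the conformal invariance of $2$-modulus under $\phi$. For $(C)\Rightarrow(B)$, LLC follows from doubling plus $2$-Loewner by the standard chaining-of-balls argument. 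The upper bound on the modulus of concentric annuli is subtler; I would use the reciprocity of Riemannian surfaces (in the sense of Rajala), in the form $\Mod_2\Gamma\cdot\Mod_2\Gamma^{\ast}\le\kappa$, where $\Gamma^{\ast}$ consists of curves separating $\br B(a,r)$ from $X\setminus B(a,Lr)$. A lower bound for $\Mod_2\Gamma^{\ast}$ comes from Loewner applied to diametrically opposite continua inside the annulus, which yields the desired upper bound for $\Mod_2\Gamma(\br B(a,r),X\setminus B(a,Lr);X)$.

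The direction I expect to be the main obstacle is $(B)\Rightarrow(A)$. Here I would invoke Theorem~\ref{theorem:reciprocal}, which handles reciprocal metric surfaces; Riemannian $2$-spheres are automatically reciprocal, so the theorem applies. The core task is to verify the Tukia--V\"ais\"al\"a three-point condition for $\phi$ using only doubling, LLC, and the modulus upper bound on annuli. LLC supplies connecting continua of controlled diameter between nearby points, doubling yields scale-invariance of covering estimates, and the modulus upper bound, combined with the conformal invariance of $2$-modulus under $\phi$, constrains how much $\phi^{-1}$ can distort concentric balls in $\mathbb{S}^2$. The delicate point is to obtain quantitative quasisymmetry estimates without assuming any Loewner-type lower bound in the hypotheses of (B); the framework of Theorem~\ref{theorem:reciprocal} is designed to carry this out. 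The implication $(B)\Rightarrow(C)$ then follows by chaining $(B)\Rightarrow(A)$ with $(A)\Rightarrow(C)$, closing the loop and yielding the quantitative nature of all implications.
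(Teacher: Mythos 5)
Your overall strategy---reduce everything to the conformal uniformizing map $\phi\colon \mathbb S^2\to X$ and invoke Theorem \ref{theorem:reciprocal} for the hard direction---is essentially the paper's own route: Theorem \ref{theorem:smooth} is stated as the Riemannian special case of Theorem \ref{theorem:reciprocal}, which is proved in Section \ref{section:finite_area} via Theorems \ref{theorem:bac}, \ref{theorem:qc_qs}, and \ref{theorem:upgrade}, independently of Theorem \ref{theorem:smooth}, so that citation is legitimate and not circular. Your \ref{smooth:quasisphere} $\Rightarrow$ \ref{smooth:loewner} argument also matches the paper's (it uses Tyson's theorem for modulus quasi-invariance where you use exact conformal invariance of $\phi$; both work).

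The genuine gap is your direct \ref{smooth:loewner} $\Rightarrow$ \ref{smooth:llc-mod}. First, the proposed lower bound for $\Mod_2\Gamma^*$ has the minorization inequality backwards: if $E,F$ are continua inside the annulus that every separating curve must meet, then every $\gamma\in\Gamma^*$ \emph{contains a subcurve} joining $E$ to $F$, and the overflowing principle then gives $\Mod_2\Gamma^*\leq \Mod_2\Gamma(E,F;X)$ --- an upper bound, not the lower bound you need. The standard lower bound for the separating family of $B(a,Lr)\setminus \br B(a,r)$ (coarea applied to $x\mapsto d(a,x)$ plus Cauchy--Schwarz) yields $\Mod_2\Gamma^*\gtrsim (L-1)^2r^2/\mathcal H^2(B(a,Lr))$, which is only useful given the upper mass bound $\mathcal H^2(B(a,Lr))\lesssim r^2$, i.e.\ upper Ahlfors $2$-regularity; that is exactly the hypothesis the theorem deliberately avoids, and it is not available with uniform constants from doubling plus $2$-Loewner on an arbitrary Riemannian sphere. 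The same objection applies to the claim that $\llc$ follows from doubling plus $2$-Loewner ``by standard chaining'': the Heinonen--Koskela argument uses $Q$-regularity, and the whole content of the theorem is quantitativity, so constants may not depend on the particular metric $g$. (There is also the minor point that $B(a,Lr)\setminus\br B(a,r)$ need not be a ring domain, so the quadrilateral/annulus duality $\Mod_2\Gamma\cdot\Mod_2\Gamma^*\leq\kappa$ does not apply verbatim.) The repair is to reroute as the paper does: prove \ref{smooth:loewner} $\Rightarrow$ \ref{smooth:quasisphere} by applying Theorem \ref{theorem:upgrade} to $\phi^{-1}$ (a modulus-preserving map from a space satisfying \ref{smooth:loewner} onto $\widehat\C$, which satisfies \ref{smooth:llc-mod}), and then obtain \ref{smooth:quasisphere} $\Rightarrow$ \ref{smooth:llc-mod} from Theorem \ref{theorem:qc_qs}, which transports the annulus modulus bound of $\widehat\C$ through the now quasisymmetric and quasiconformal parametrization.
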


Of course a smooth sphere satisfies \ref{smooth:llc-mod} and \ref{smooth:loewner} trivially, since it is locally bi-Lipschitz to the Euclidean plane. So the content of the theorem is about the quantitative relation of the quasisymmetric distortion function with the various parameters in \ref{smooth:llc-mod} and \ref{smooth:loewner}. Moreover, condition \ref{smooth:llc-mod}\ref{smooth:modulus} is a consequence of Ahlfors $2$-regularity. We remark that Semmes \cite{Semmes:chordarc2}*{Theorem 5.4} had already provided sufficient conditions in the spirit of Bonk--Kleiner for a smooth $2$-sphere to be a quasisphere, quantitatively. In this paper we investigate the following question.

\begin{question}
To what extent does the conclusion of Theorem \ref{theorem:smooth} remain valid for arbitrary, non-smooth, metric $2$-spheres?
\end{question}

We explore answers to this question in two different directions. First, we discuss the case of spheres of finite area and attempt to provide a characterization in the spirit of Theorem \ref{theorem:smooth}. Second, and in fact as our main theorem, we prove that every quasisphere of dimension $2$ can be approximated by smooth uniform quasispheres. We conclude that, although an arbitrary $2$-dimensional quasisphere does not satisfy \ref{smooth:llc-mod}, it can be approximated by smooth spheres that satisfy this condition with uniform parameters.

\subsection{Quasispheres of finite area}

We show that the implications \ref{smooth:llc-mod} $\Rightarrow$ \ref{smooth:quasisphere} $\Rightarrow$ \ref{smooth:loewner} in Theorem \ref{theorem:smooth} remain true for all metric $2$-spheres of finite area. This is a consequence of the very recent uniformization result of the author and Romney \cites{NtalampekosRomney:nonlength}, which implies that every metric $2$-sphere of finite area can be parametrized by the Euclidean $2$-sphere with a {weakly quasiconformal map}; see Section \ref{section:finite_area} for the definition. This result generalizes the classical uniformization theorem for smooth surfaces and is the final result in a series of recent works on uniformization of metric $2$-spheres of finite area under minimal geometric assumptions, starting from the Bonk--Kleiner theorem  \cites{BonkKleiner:quasisphere, Rajala:uniformization, LytchakWenger:parametrizations, MeierWenger:uniformization, NtalampekosRomney:length, NtalampekosRomney:nonlength}.

\begin{theorem}\label{theorem:bac}
Let $X$ be a metric $2$-sphere of finite Hausdorff $2$-measure. Then the implications \ref{smooth:llc-mod} $\Rightarrow$ \ref{smooth:quasisphere} $\Rightarrow$ \ref{smooth:loewner} in Theorem \ref{theorem:smooth} are true, quantitatively.
\end{theorem}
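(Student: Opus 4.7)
The plan is to leverage the recent uniformization theorem of the author and Romney \cite{NtalampekosRomney:nonlength}, which asserts that every metric $2$-sphere of finite Hausdorff $2$-measure admits a weakly quasiconformal parametrization $f: \hat{\C} \to X$. This map converts the metric geometry of $X$ into analytic data comparable to that of the round sphere, and in both implications the idea is to transfer modulus estimates through $f$ (or its inverse).

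For \ref{smooth:llc-mod} $\Rightarrow$ \ref{smooth:quasisphere}, I would begin with the weakly quasiconformal uniformization $f$. The modulus upper bound in \ref{smooth:llc-mod}\ref{smooth:modulus}, applied to sufficiently many annular curve families, should yield a form of reciprocity for $X$ that upgrades $f$ from weakly to genuinely quasiconformal, in the sense of satisfying two-sided modulus distortion. The doubling and linear local connectivity hypotheses in \ref{smooth:llc-mod}\ref{smooth:llc} should then allow a Heinonen--Koskela-type upgrade from quasiconformal to quasisymmetric, showing that $f$ is a quasisymmetry and hence $X$ is a quasisphere, with quantitative dependence on the given data.

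For \ref{smooth:quasisphere} $\Rightarrow$ \ref{smooth:loewner}, given a quasisymmetry $\phi: \hat{\C} \to X$, the doubling property of $X$ is immediate from quasisymmetric invariance of doubling. For the $2$-Loewner property, the finite area hypothesis should allow the uniformization framework to apply to $\phi$ and in particular should make $\phi^{-1}$ satisfy an upper modulus inequality. Given disjoint continua $E, F \subset X$ with controlled relative distance, their preimages $E', F' \subset \hat{\C}$ have controlled relative distance by the quasisymmetry of $\phi^{-1}$, so the $2$-Loewner property of $\hat{\C}$ produces a lower bound for $\Mod_2 \Gamma(E', F'; \hat{\C})$; reading the upper modulus inequality for $\phi^{-1}$ in reverse transfers this into the desired lower bound for $\Mod_2 \Gamma(E, F; X)$.

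The main obstacle will be the quantitative transfer of the lower modulus inequality through the (weakly) quasiconformal parametrization. For \ref{smooth:llc-mod} $\Rightarrow$ \ref{smooth:quasisphere}, the nontrivial step is extracting genuine reciprocity from the single-sided bound in \ref{smooth:llc-mod}\ref{smooth:modulus} together with the uniformization. For \ref{smooth:quasisphere} $\Rightarrow$ \ref{smooth:loewner}, the difficulty is verifying that $\phi^{-1}$ admits the required upper modulus inequality with constants depending only on the quasisymmetric distortion and the doubling data, with no Ahlfors $2$-regularity hypothesis available.
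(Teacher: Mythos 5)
Your second implication \ref{smooth:quasisphere} $\Rightarrow$ \ref{smooth:loewner} is essentially the paper's argument: doubling is quasisymmetrically invariant, and the ``upper modulus inequality for $\phi^{-1}$'' you need is exactly Tyson's theorem (\cite{Tyson:lusin}*{Theorem 3.13}), which says that a quasisymmetric map from $\widehat\C$ onto a surface of finite Hausdorff $2$-measure satisfies $\Mod_2\Gamma\leq K\Mod_2\phi(\Gamma)$ with $K=K(\eta)$; no Ahlfors regularity is needed. Transferring the Loewner bound of $\widehat\C$ through this inequality, with the relative distance controlled by quasisymmetry, is precisely what the paper does.

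The first implication, however, contains a gap in the step ``annulus modulus bound $\Rightarrow$ reciprocity $\Rightarrow$ genuine quasiconformality of $f$.'' There is no direct route from condition \ref{smooth:llc-mod}\ref{smooth:modulus} to reciprocity: the paper's own derivation of reciprocity from that condition (Theorem \ref{theorem:qc_qs}, via Williams' theorem) \emph{presupposes} that the parametrization is quasisymmetric, because the quasisymmetry is what lets one compare round annuli in $\widehat\C$ with metric annuli in $X$ before applying Williams' criterion. Using reciprocity as an intermediate step toward proving quasisymmetry is therefore circular, and the annulus bound is strictly weaker than the known sufficient conditions for reciprocity (such as the pointwise upper mass bound). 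Fortunately the step is also unnecessary: the Heinonen--Koskela-type upgrade (the paper's Theorem \ref{theorem:upgrade}) runs directly on the \emph{weakly} quasiconformal map, since the argument only ever transfers the Loewner lower bound from $\widehat\C$ to $X$ through the one-sided inequality $\Mod_2\Gamma\leq K\Mod_2 f(\Gamma)$, while the upper bounds on the target side come from \ref{smooth:llc-mod}\ref{smooth:modulus} itself (iterated over nested annuli as in Lemma \ref{lemma:modulus_log}) rather than from any regularity of $X$. Two further points you omit: a weakly quasiconformal map is a priori only a uniform limit of homeomorphisms, and one must first use the modulus bound to show that nonconstant curves through any point have zero $2$-modulus, which forces $f$ to be a homeomorphism; and the quasisymmetry upgrade requires a three-point normalization of $f$ by a M\"obius transformation. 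Finally, note that in your last step doubling and LLC alone do not suffice for the upgrade --- the modulus bound \ref{smooth:llc-mod}\ref{smooth:modulus} is used there a second time in place of the missing Loewner/Ahlfors-regularity hypothesis on $X$.
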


In addition, we prove that the converse implications remain valid for \textit{reciprocal spheres}, which were introduced by Rajala \cite{Rajala:uniformization}. These are precisely spheres of finite area that can be mapped to the Euclidean $2$-sphere with a quasiconformal map (according to the geometric definition involving modulus). In particular, all Riemannian and polyhedral $2$-spheres are reciprocal by the classical uniformization theorem. 

\begin{theorem}\label{theorem:reciprocal}
Let $X$ be a metric $2$-sphere of finite Hausdorff $2$-measure that is reciprocal. Then the conclusion of Theorem \ref{theorem:smooth} is true.
\end{theorem}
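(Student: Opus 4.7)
Since Theorem~\ref{theorem:bac} already establishes \ref{smooth:llc-mod}~$\Rightarrow$~\ref{smooth:quasisphere}~$\Rightarrow$~\ref{smooth:loewner} for every metric $2$-sphere of finite area, only the reverse implications need to be proved under the reciprocality hypothesis. Reciprocality supplies, by Rajala's theorem, a homeomorphism $g\colon \mathbb{S}^{2}\to X$ that is quasiconformal in the geometric (modulus) sense, with constant $K$ depending only on $X$.

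For \ref{smooth:loewner}~$\Rightarrow$~\ref{smooth:quasisphere}: under \ref{smooth:loewner}, both $X$ and $\mathbb{S}^{2}$ are doubling and $2$-Loewner. Applying the Heinonen--Koskela theorem---quasiconformal homeomorphisms between doubling $Q$-Loewner spaces are quantitatively quasisymmetric---to $g$ yields that $g$ is QS. Hence $X$ is a quasisphere.

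For \ref{smooth:quasisphere}~$\Rightarrow$~\ref{smooth:llc-mod}: let $h\colon X\to \mathbb{S}^{2}$ be a QS homeomorphism. The subcondition \ref{smooth:llc} is inherited from $\mathbb{S}^{2}$ through $h$, leaving \ref{smooth:modulus}. The key idea is a self-bootstrap: Theorem~\ref{theorem:bac} gives \ref{smooth:quasisphere}~$\Rightarrow$~\ref{smooth:loewner}, so under \ref{smooth:quasisphere} the space $X$ is already doubling and $2$-Loewner, and the preceding paragraph upgrades $g$ itself to a QS map. Consequently $\phi:=h\circ g\colon \mathbb{S}^{2}\to\mathbb{S}^{2}$ is QS, hence classically quasiconformal with some constant $K_\phi$ depending only on $K$ and the QS distortion $\eta$ of $h$.

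Now fix a ball $B(a,r)\subset X$ and a large $L>1$, and set $A:=B(a,Lr)\setminus \br B(a,r)$. The QS property of $h$ produces concentric Euclidean balls satisfying $h(B(a,r))\subset B(h(a),R_1)$ and $B(h(a),R_2)\subset h(B(a,Lr))$ with $R_2/R_1\geq 1/\eta(1/L)$; for $L$ large enough this ratio is definite, and $h(A)$ contains a round annulus of conformal modulus at least $m_0=m_0(L,\eta)>0$. Since $\phi$ is a QC self-map of $\mathbb{S}^{2}$ mapping $\widetilde A:=g^{-1}(A)$ onto $h(A)$, the standard conformal distortion gives $\mathrm{mod}(\widetilde A)\geq m_0/K_\phi$. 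Transferring back to $X$ through the modulus distortion of $g$,
\[
\Mod_2 \Gamma\bigl(\br B(a,r),\, X\setminus B(a,Lr);\, X\bigr) \;\leq\; K\cdot \frac{2\pi}{\mathrm{mod}(\widetilde A)} \;\leq\; \frac{2\pi K K_\phi}{m_0},
\]
which is the quantitative form of \ref{smooth:modulus}. The crux, and the main obstacle to spot, is precisely this self-bootstrap: $2$-Loewner has to be extracted from the quasisphere assumption via Theorem~\ref{theorem:bac} \emph{before} reciprocality and Heinonen--Koskela can be combined to promote $g$ to a QS map and control $\phi$ on the smooth sphere; once this is in place, the modulus estimate is a routine computation on $\mathbb{S}^{2}$.
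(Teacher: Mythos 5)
Your overall architecture matches the paper's: Theorem \ref{theorem:bac} supplies \ref{smooth:llc-mod} $\Rightarrow$ \ref{smooth:quasisphere} $\Rightarrow$ \ref{smooth:loewner}; Rajala's quasiconformal parametrization plus a QC-to-QS upgrade gives \ref{smooth:loewner} $\Rightarrow$ \ref{smooth:quasisphere}; and \ref{smooth:quasisphere} $\Rightarrow$ \ref{smooth:llc-mod}\ref{smooth:modulus} reduces to showing the quasisymmetric parametrization is quasiconformal followed by an annulus/egg-yolk modulus estimate on $\widehat\C$ (the paper packages that computation as Theorem \ref{theorem:qc_qs}, and obtains quasiconformality of $h$ via Theorem \ref{theorem:tyson} and Lemma \ref{lemma:wqc_qc}; your detour through $\phi=h\circ g$ is a workable variant, and your omission of the three-point normalizations is minor).

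The genuine gap is in the upgrade step. The theorem you invoke --- ``quasiconformal homeomorphisms between doubling $Q$-Loewner spaces are quantitatively quasisymmetric'' --- is not the Heinonen--Koskela theorem and is not available here. The classical statement assumes Ahlfors $Q$-regularity and, crucially, is \emph{asymmetric}: one needs a Loewner-type lower modulus bound on the source, but $\llc$ together with an upper modulus bound of the form \ref{smooth:llc-mod}\ref{smooth:modulus} on the target. Condition \ref{smooth:loewner} supplies neither target-side hypothesis, and it genuinely cannot: by Theorem \ref{theorem:bac} every finite-area quasisphere is doubling and $2$-Loewner, yet the non-reciprocal quasispheres of \cites{Romney:absolute, NtalampekosRomney:absolute} fail \ref{smooth:llc-mod}\ref{smooth:modulus}. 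So applying your cited theorem to $g\colon\mathbb S^2\to X$ with target $X$ satisfying only \ref{smooth:loewner} is unjustified, and this step is also the linchpin of your self-bootstrap in the other direction. The repair is immediate and is exactly what the paper does: run the upgrade on the inverse map $g^{-1}\colon X\to\widehat\C$ (suitably normalized), whose source satisfies \ref{smooth:loewner} and whose target is the round sphere, which satisfies \ref{smooth:llc-mod}; this is the content of Theorem \ref{theorem:upgrade}, and since inverses of quasisymmetries are quasisymmetric, $g$ is then QS. With that correction your argument goes through, though the bootstrap is dispensable: Theorem \ref{theorem:tyson} and Lemma \ref{lemma:wqc_qc} already show directly that any quasisymmetric $h\colon\widehat\C\to X$ onto a reciprocal sphere is quasiconformal.
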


See \cite{Rehmert:thesis}*{Theorem 1.5.7} for a relevant result in the setting of metric spaces homeomorphic to multiply connected domains in the $2$-sphere. On the other hand, for general metric $2$-spheres of finite area the implication \ref{smooth:quasisphere} $\Rightarrow$ \ref{smooth:llc-mod} fails, as a consequence of the next theorem. 

\begin{theorem}\label{theorem:qc_qs}
Let $X$ be a metric $2$-sphere of finite Hausdorff $2$-measure and $h\colon \widehat{\C}\to X$ be a quasisymmetric homeomorphism. The following are quantitatively equivalent.
\begin{enumerate}[label=\normalfont{(\arabic*)}]
	\item\label{qc_qs:1} The map $h$ is quasiconformal.
	\item\label{qc_qs:2} There exist constants $L>1$ and $M>0$ such that for every $a\in X$ we have
$$\liminf_{r\to 0^+}\Mod_2 \Gamma(\br B(a,r), X\setminus B(a,L r);X) <M.$$
	\item\label{qc_qs:3} There exist constants $L'>1$ and $M'>0$ such that for every ball $B(a,r)\subset X$ we have
$$\Mod_2 \Gamma(\br B(a,r), X\setminus B(a,L'r);X) <M'.$$
\end{enumerate}
\end{theorem}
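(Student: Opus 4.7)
I will prove the equivalences via the cycle $(3)\Rightarrow(2)\Rightarrow(1)\Rightarrow(3)$. The implication $(3)\Rightarrow(2)$ is immediate: for any fixed $L$ and $a$, $\liminf_{r\to 0^+}\Mod_2\Gamma(\br B(a,r),X\setminus B(a,Lr);X)$ is bounded above by the uniform constant $M'$ from $(3)$, once one adjusts $L$ to match $L'$.

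For $(1)\Rightarrow(3)$, the plan is to pull ring domains back through $h^{-1}$ using the quasisymmetry of $h$. Fix $a\in X$ and $r>0$, and set $y=h^{-1}(a)$. The $\eta$-quasisymmetry of $h$ provides $s>0$ and $\lambda=\lambda(L,\eta)>1$ such that
\[
h^{-1}(B(a,r))\subset B(y,s),\qquad B(y,\lambda s)\subset h^{-1}(B(a,Lr)).
\]
Every curve in $\Gamma(\br B(a,r), X\setminus B(a,Lr);X)$ is the $h$-image of a curve joining $\br B(y,s)$ and $\widehat{\C}\setminus B(y,\lambda s)$, so the upper quasiconformal modulus inequality together with the standard Euclidean ring modulus on $\widehat{\C}$ yields a bound $M'=M'(K,\lambda)$ independent of $a$ and $r$.

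The main work is the implication $(2)\Rightarrow(1)$, which I carry out in two stages. \emph{Stage 1.} I upgrade the pointwise infinitesimal hypothesis $(2)$ to the uniform bound $(3)$ (possibly with different constants). For $a\in X$ and $r>0$, pick $r_n$ from the $\liminf$ sequence with $r_n$ small relative to $r$; since $h^{-1}$ is $\eta$-quasisymmetric, the preimages of $B(a,r_n)$, $B(a,Lr_n)$, $B(a,r)$, $B(a,L'r)$ in $\widehat{\C}$ are quasi-round with controlled relative size. By chaining admissible functions at scales $r_n, Lr_n, L^2 r_n,\dots$ up to $r$, using the quasi-roundness of preimages and subadditivity of $\Mod_2$, I assemble an admissible function for $\Gamma(\br B(a,r), X\setminus B(a,L'r);X)$ with controlled mass. \emph{Stage 2.} Given $(3)$, I deduce that $h$ is quasiconformal. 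The upper modulus inequality $\Mod_2 h(\Gamma)\le K\Mod_2\Gamma$ for a path family $\Gamma$ on $\widehat{\C}$ follows from decomposing $h(\Gamma)$ via Whitney-type ball covers whose pieces are ring families in $X$ controlled by $(3)$, weighted against the $\widehat{\C}$-modulus of their preimages (a scale-invariant quantity due to the quasisymmetric sandwich). The reverse modulus inequality follows by a similar argument using quasisymmetric preimages and the explicit ring modulus on $\widehat{\C}$.

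The main obstacle lies in Stage 1: the $\liminf$ hypothesis supplies only a sparse sequence of radii at each point where the ring modulus is bounded, and one must chain these carefully across scales with the quasisymmetric control on preimages to obtain a uniform bound, without the constants blowing up with the chain length. A secondary subtlety is Stage 2, where promoting an upper bound on ring modulus to genuine quasiconformality in the absence of Ahlfors $2$-regularity relies on the finite Hausdorff $2$-measure of $X$ and on a quasisymmetric decomposition of arbitrary path families into ring pieces.
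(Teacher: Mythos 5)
Your implications (3)$\Rightarrow$(2) and (1)$\Rightarrow$(3) are correct and coincide with the paper's argument: sandwich the preimage of the ring between two round rings in $\widehat{\C}$ of controlled ratio using the quasisymmetry of $h^{-1}$, and invoke the Euclidean ring modulus bound together with the upper quasiconformal inequality.

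The genuine gap is in Stage 1 of your (2)$\Rightarrow$(1). The liminf hypothesis supplies, at each point $a$, controlled annuli only along a sequence of radii tending to zero; it gives no controlled annulus at any scale comparable to a fixed macroscopic $r$. An annulus $B(a,Lr_n)\setminus \br B(a,r_n)$ with $r_n\ll r$ sits deep inside $B(a,r)$ and is not crossed by the curves of $\Gamma(\br B(a,r),X\setminus B(a,L'r);X)$, so it contributes nothing to an admissible function for that family; and the intermediate scales $L^k r_n$ between $r_n$ and $r$ are precisely the scales at which (2) gives no information. Hence there is no direct chaining proof of (2)$\Rightarrow$(3); in the paper that implication is obtained only by passing through (1). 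What replaces your Stage 1 is a measure-theoretic local-to-global argument over \emph{all} points simultaneously, namely Williams' theorem (Theorem \ref{theorem:williams}), applied to $h^{-1}\colon X\to\widehat{\C}$: since the spherical measure is Ahlfors $2$-regular, condition (2), transported by the quasisymmetric ball sandwich, verifies Williams' infinitesimal hypothesis and directly yields the global inequality $\Mod_2 h(\Gamma)\leq K\Mod_2\Gamma$ for every curve family $\Gamma$ in $\widehat{\C}$, with no intermediate uniform ring bound needed. Your Stage 2 also understates the difficulty of the reverse modulus inequality: it does not follow ``by a similar argument.'' The inequality from Williams' theorem only says that $h^{-1}$ is weakly quasiconformal, and upgrading this to genuine quasiconformality uses that the target $\widehat{\C}$ is reciprocal (Lemma \ref{lemma:wqc_qc}); for non-reciprocal targets the reverse inequality genuinely fails, which is exactly why the non-reciprocal quasispheres of finite area discussed in the paper violate condition (3).
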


Here $\widehat \C$ denotes the Riemann sphere equipped with the spherical metric and measure. This result implies that a quasisphere of finite area that satisfies condition \ref{smooth:llc-mod}\ref{smooth:modulus} of Theorem \ref{theorem:smooth} is necessarily reciprocal. On the other hand, examples of quasispheres of finite area that are not reciprocal have been presented by Romney and the author \cites{Romney:absolute, NtalampekosRomney:absolute}. In particular, there exists a quasisphere $X$ with finite area in which condition \ref{smooth:llc-mod}\ref{smooth:modulus} fails. 

We are not aware whether the implication \ref{smooth:loewner} $\Rightarrow$ \ref{smooth:quasisphere} fails in general.
\begin{question}
Does there exist a doubling and $2$-Loewner metric sphere that is not a quasisphere? What if we add further assumptions, such as the $\llc$ property?
\end{question}
If such a space exists, then by Theorem \ref{theorem:reciprocal} it cannot be reciprocal, so it does not admit a quasiconformal parametrization from the Euclidean sphere, but only a weakly quasiconformal one. We prove Theorems \ref{theorem:bac}, \ref{theorem:reciprocal}, and \ref{theorem:qc_qs} in Section \ref{section:finite_area}.

\subsection{Approximation by smooth quasispheres}

A typical example of a quasisphere that is not Ahlfors $2$-regular and has infinite area is the \textit{snowsphere} of Meyer \cite{Meyer:origami}; see Figure \ref{figure:snowsphere}. The snowsphere is constructed as follows. Consider the surface of the unit cube in $\R^3$. Each face is a unit square and we subdivide it into nine squares of side length $1/3$. Then we replace the middle square with a cubical cap, consisting of five faces of side length $1/3$. We repeat this subdivision and replacement process in each square of side length $1/3$, etc. The space that we obtain in each stage of the construction is a polyhedral surface with its intrinsic metric and it is actually a quasisphere with uniform parameters. The snowsphere is the Gromov--Hausdorff limit of that sequence of spaces. Motivated by the example of the snowpshere we pose the following question.

\begin{figure}
\centering
\includegraphics[scale=0.5]{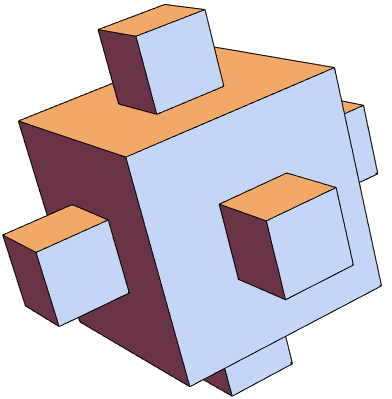}
\caption{The first stage of the construction of the snowsphere.}\label{figure:snowsphere}
\end{figure}

\begin{question}\label{question:approximation}
Does every quasisphere arise as the limit of a sequence of polyhedral or smooth spheres that are uniform quasispheres?
\end{question}

As the main result of the paper, we provide an affirmative answer to the question. 

\begin{theorem}[Smooth approximation of quasisymmetric $2$-manifolds]\label{theorem:main:approximation}
Let $X$ be a compact Riemannian $2$-manifold (with boundary), equipped with a Riemannian metric $g$ and with the corresponding intrinsic metric $d_g$. Let $d_X$ be another metric on $X$ that induces its topology. The following are quantitatively equivalent.
	\begin{enumerate}[label=\normalfont(\arabic*)]
	\item\label{theorem:main:approximation:1} The metric space $(X,d_g)$ is quasisymmetric to $(X,d_X)$.
	\item\label{theorem:main:approximation:2}  For each $k\in \N$ there exists a Riemannian metric $g_k$ on $X$ and a metric $d_k$ that is locally isometric to the intrinsic metric $d_{g_k}$ such that 
	\begin{align*}
	&\text{$(X,d_k)$ converges to $(X,d_X)$ in the Gromov--Hausdorff metric as $k\to\infty$ and}\\
	&\text{$(X,d_{k})$ is uniformly quasisymmetric to $(X,d_g)$ for each $k\in \N$.}
	\end{align*}
	\end{enumerate}
In this case there exists an approximately isometric sequence $f_k\colon (X,d_k) \to (X,d_X)$, $k\in \N$, of uniform quasisymmetries.
\end{theorem}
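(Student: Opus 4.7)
The direction \ref{theorem:main:approximation:2} $\Rightarrow$ \ref{theorem:main:approximation:1} is a compactness argument. Let $\phi_k\colon (X,d_g)\to(X,d_k)$ be uniformly $\eta$-quasisymmetric homeomorphisms, and let $\alpha_k\colon(X,d_k)\to(X,d_X)$ be the $\varepsilon_k$-approximate isometries witnessing Gromov--Hausdorff convergence, with $\varepsilon_k\to 0$. The compositions $\alpha_k\circ\phi_k$ are ``approximately $\eta$-quasisymmetric'' with error $\varepsilon_k$; by Arzelà--Ascoli (the family is equicontinuous and uniformly bounded on the compact space $X$) a subsequence converges uniformly to a map $(X,d_g)\to(X,d_X)$, which is an honest $\eta$-quasisymmetric homeomorphism by stability of the three-point condition under uniform convergence on compact spaces.

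For the substantive direction \ref{theorem:main:approximation:1} $\Rightarrow$ \ref{theorem:main:approximation:2}, my plan is to build polyhedral approximations and then smooth. Fix $\varepsilon_k\to 0$ and choose a maximal $\varepsilon_k$-separated set $N_k\subset X$ relative to $d_X$. Using the smooth structure of $(X,g)$ to control the topology, I assemble a triangulation $T_k$ of $X$ with vertex set $N_k$, every face having $d_X$-diameter $\asymp \varepsilon_k$ and uniformly bounded combinatorial valence. Realize $T_k$ as a polyhedral surface by declaring each triangle to be the Euclidean triangle whose side lengths equal the $d_X$-distances of its vertices (perturbing slightly if necessary so that the triangle inequalities are strict); this defines a polyhedral metric $d_k$ on $X$. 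A mild smoothing near vertices and along edges produces a Riemannian metric $g_k$ whose intrinsic distance is locally isometric to $d_k$ off an arbitrarily small neighbourhood of the $0$-skeleton. Gromov--Hausdorff convergence $(X,d_k)\to(X,d_X)$ and the existence of approximately isometric maps $f_k$ (identifying $N_k$ with itself) are immediate from the construction.

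The main obstacle is verifying that $(X,d_k)$ is uniformly quasisymmetric to $(X,d_g)$ independently of $k$, and similarly that each $f_k$ is a uniform quasisymmetry. Since $(X,d_X)$ is only a quasisphere and may fail doubling, reciprocity or the Loewner property, Theorem \ref{theorem:smooth} cannot be applied to $d_X$ itself to transfer geometric conditions to $d_k$. Instead, I plan to verify the three-point characterization of quasisymmetry by hand, in two scale regimes. At scales $\gg \varepsilon_k$, the polyhedral metric $d_k$ agrees with $d_X$ up to additive error $O(\varepsilon_k)$ (negligible compared with the scale), so the quasisymmetric distortion is inherited from the given quasisymmetry between $(X,d_g)$ and $(X,d_X)$. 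At scales $\lesssim \varepsilon_k$, any configuration involves only a bounded number of incident Euclidean triangles, and uniform quasisymmetric distortion reduces to uniform non-degeneracy of those triangles; this non-degeneracy follows because the triangulation has bounded valence and $d_X$-distances among neighbouring vertices are mutually comparable, a fact transferred from the $d_g$-geometry through the quasisymmetry $h$ (which preserves doubling and controls distortion of ratios). Matching the two regimes at the crossover scale $\varepsilon_k$ gives a uniform quasisymmetric bound independent of $k$; the same two-scale analysis applied to the vertex-identification maps yields that $f_k$ is a uniform quasisymmetry between $(X,d_k)$ and $(X,d_X)$.
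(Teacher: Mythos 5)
Your direction \ref{theorem:main:approximation:2} $\Rightarrow$ \ref{theorem:main:approximation:1} is essentially the paper's argument (a compactness/limit argument, carried out in the paper via Lemma \ref{lemma:keithlaakso}), though you should add a normalization at two fixed points to rule out the limit collapsing to a constant, and note that the $\varepsilon_k$-isometries $\alpha_k$ need not be continuous, so Arzel\`a--Ascoli does not apply verbatim.

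The direction \ref{theorem:main:approximation:1} $\Rightarrow$ \ref{theorem:main:approximation:2} has a decisive gap: your $d_k$ is, by construction, a length metric (a polyhedral metric glued from Euclidean triangles, then smoothed), and Gromov--Hausdorff limits of compact length spaces are length spaces. A quasisphere $(X,d_X)$ need not be a length space, nor even quasiconvex, so the claimed convergence $(X,d_k)\to(X,d_X)$ is not ``immediate'' --- it is false in general for any construction whose output is a genuine length metric. Concretely, for $x,y$ at definite distance, $d_k(x,y)$ is comparable to $\inf\sum_i d_X(v_{i-1},v_i)$ over $\varepsilon_k$-fine chains of vertices, which tends to the (possibly infinite) induced length metric of $d_X$ rather than to $d_X(x,y)$. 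This is precisely why statement \ref{theorem:main:approximation:2} only requires $d_k$ to be \emph{locally} isometric to $d_{g_k}$. The paper's resolution is the gluing construction of Lemma \ref{lemma:glue}: one keeps the smoothed polyhedral metric at small scales but redeclares the distances between the vertices to equal the corresponding $d_X$-distances, obtaining a metric $\widetilde d$ that is locally isometric to a Riemannian metric (the vertex set being discrete) yet globally tracks $d_X$. Showing that this surgery preserves uniform quasisymmetry to $(X,d_g)$ is the technical heart of the proof (Theorem \ref{theorem:approximation_quasisymmetric}) and cannot be dispensed with.

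A second genuine problem is your claim that uniform non-degeneracy of the Euclidean triangles ``follows because \dots $d_X$-distances among neighbouring vertices are mutually comparable.'' Mutual comparability of the three side lengths does not prevent degeneracy (side lengths $1,1,2$ are comparable and the triangle is flat), and quasisymmetry of $\id\colon(X,d_g)\to(X,d_X)$ gives no lower bound keeping the images of the three vertices of an equilateral triangle away from collinearity. Arbitrarily thin triangles are not uniformly bi-Lipschitz to equilateral ones, so your small-scale regime breaks down. The paper circumvents this with the tent construction of Lemma \ref{lemma:triangle}: each (possibly degenerate) triangle with the prescribed side lengths is subdivided at the barycenter, which is then lifted to height comparable to the diameter, yielding three uniformly non-degenerate simplices regardless of the shape of the base triangle.
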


The implication from \ref{theorem:main:approximation:2} to \ref{theorem:main:approximation:1} is quite standard. We reformulate the reverse implication, which is the most technical, in a way that the quantitative dependence is more clear. Let $X$ be a smooth manifold. We define $R_{\loc}(X)$ to be the collection of metrics $d$ on $X$ that are locally isometric to the intrinsic metric arising from a Riemannian metric on $X$. For a metric $d$ on $X$ and a homeomorphism $\eta\colon [0,\infty)\to [0,\infty)$ we denote by $QS(X,d,\eta)$ the collection of metric spaces $Y$ with the property that there exists an $\eta$-quasisymmetric map from $(X,d)$ onto $Y$. 
The implication from \ref{theorem:main:approximation:1} to \ref{theorem:main:approximation:2} in our main theorem can be restated as follows. Note that the closure refers to the Gromov--Hausdorff distance.

\begin{theorem}\label{theorem:main:approximation:rloc}
For each distortion function $\eta$ there exists a distortion function $\eta'$ such that for each compact Riemannian $2$-manifold $X$ with intrinsic metric $d_g$ we have
$$QS(X,d_g,\eta)\subset \overline{ R_{\loc}(X)\cap QS(X,d_g,\eta')}.$$
\end{theorem}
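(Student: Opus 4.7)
Let $Y = (X, d_X) \in QS(X, d_g, \eta)$ with $h \colon (X, d_g) \to (X, d_X)$ an $\eta$-quasisymmetric identification. I will approximate $d_X$ by polyhedral metrics built from a sequence of shrinking smooth triangulations of $X$ whose vertex-to-vertex distances are read off from $(X, d_X)$, and then smooth each polyhedral metric to a Riemannian one. The setup is to fix smooth triangulations $\mathcal{T}_k$ of $X$ with $d_g$-mesh $\to 0$, with uniformly bounded vertex degrees, and with uniformly $d_g$-round triangles (bounded aspect ratio independent of $k$ and of $X$); this last condition can be arranged using small geodesic coordinate charts, where $d_g$ is near-Euclidean, so the universal roundness constant does not depend on the specific Riemannian structure of $X$.

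For each combinatorial triangle $T = \{a, b, c\}$ of $\mathcal{T}_k$, I form an abstract Euclidean triangle $\widetilde T$ with side lengths $d_X(h(a), h(b))$, $d_X(h(b), h(c))$, $d_X(h(c), h(a))$. Uniform $d_g$-roundness of $T$ together with the $\eta$-QS property of $h$ forces the three ratios of these numbers to be bounded above and below by constants depending only on $\eta$ and the roundness parameter; hence each $\widetilde T$ is non-degenerate with uniformly controlled shape. Gluing the triangles $\widetilde T$ along the combinatorial identifications of $\mathcal{T}_k$ produces a polyhedral surface homeomorphic to $X$, and I let $d_k^{\mathrm{pl}}$ denote the corresponding intrinsic metric pulled back to $X$. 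This metric is flat Euclidean away from the finite vertex set, with cone angles uniformly bounded in terms of the bounded vertex degree and controlled triangle shape. I then smooth the cone singularities in disjoint neighborhoods of diameters $o(1)$ as $k \to \infty$ to obtain a smooth Riemannian metric $g_k$, and set $d_k = d_{g_k} \in R_{\loc}(X)$. Defining $f_k \colon (X, d_k) \to (X, d_X)$ on vertices by $f_k(v) = h(v)$ and extending piecewise through each simplex, I expect $f_k$ to be an $\varepsilon_k$-approximate isometry with $\varepsilon_k \to 0$, since the $d_g$-mesh and hence, by uniform continuity of $h$ on the compact space $(X, d_g)$, the $d_X$-diameter of each $h(T)$ shrinks to zero; this gives the Gromov--Hausdorff convergence $(X, d_k) \to (X, d_X)$.

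To establish that the identity $(X, d_g) \to (X, d_k)$ is $\eta'$-quasisymmetric with $\eta'$ depending only on $\eta$, the idea is to compare $d_k(x, y)$ to $d_X(h(x), h(y))$ by chaining through a bounded number of vertices of $\mathcal{T}_k$: at vertex scales the $d_k$-distances agree up to a uniform constant with the $\widetilde T$-side lengths, which are by construction exactly the corresponding $d_X$-distances between $h$-images, while at sub-mesh scales $d_k$ is uniformly bi-Lipschitz to an ordinary Euclidean triangle of controlled shape. Combining these two regimes via a three-point comparison and invoking $\eta$-QS of $h$ at vertex scales should yield an $\eta'$ depending only on $\eta$ and the combinatorial constants. \textbf{The main obstacle} I anticipate is precisely this uniform QS estimate: distortion is easy to bound separately above and below the mesh scale, but triples $\{x, y, z\}$ whose pairwise $d_g$-distances straddle the mesh threshold must be handled by a delicate chaining argument in which the QS distortion of $h$ is composed with local Euclidean bi-Lipschitz control, and one has to verify that the composition remains bounded independently of $k$. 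A secondary technical difficulty is choosing the cone-smoothing neighborhoods small enough that they neither disturb the uniform QS bounds nor affect the Gromov--Hausdorff distance to $(X, d_X)$ beyond the $\varepsilon_k$ error already accounted for.
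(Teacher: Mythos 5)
There are two genuine gaps, and the second one is fatal to the plan as stated.

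First, your claim that uniform comparability of the three numbers $d_X(h(a),h(b))$, $d_X(h(b),h(c))$, $d_X(h(c),h(a))$ forces the comparison triangle $\widetilde T$ to be ``non-degenerate with uniformly controlled shape'' is false: the side lengths $1,1,2$ are mutually comparable, yet the planar triangle they determine is degenerate. Since $d_X$ is an arbitrary metric, the triangle inequality among these three distances can hold with equality or near-equality, so your flat triangles $\widetilde T$ can be degenerate or arbitrarily thin, and the glued polyhedral surface fails to have uniform geometry (or fails to be a surface at all). This is exactly why the paper does not glue single planar triangles: Lemma \ref{lemma:triangle} lifts the barycenter of the (possibly degenerate) comparison triangle to height $1$ out of the plane and produces a complex of \emph{three} uniformly non-degenerate $2$-simplices per face, which is what makes the resulting complex $(Y,d_Y)$ uniformly quasiconformal.

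Second, and more fundamentally, you take $d_k=d_{g_k}$ to be a genuine intrinsic Riemannian metric and claim $(X,d_k)\to(X,d_X)$ in the Gromov--Hausdorff sense. Each $(X,d_{g_k})$ is a length space, and a Gromov--Hausdorff limit of compact length spaces is a length space; but $(X,d_X)\in QS(X,d_g,\eta)$ need not be a length space (e.g.\ a snowflake-type metric, which is quasisymmetric to $d_g$ but has no rectifiable curves). Concretely, for such a $d_X$ the intrinsic distance in your polyhedral surface between combinatorially distant vertices is an infimum over chains of edges whose total length blows up as the mesh shrinks, while $d_X$ of the corresponding points stays bounded; so your $f_k$ is not an approximate isometry and no choice of mesh fixes this. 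This is precisely why the theorem is stated with $R_{\loc}(X)$ rather than with intrinsic Riemannian metrics, and why the paper, after smoothing $(Y,d_Y)$ to $(Y,d_h)$ via Cattalani's theorem, must apply the gluing construction of Lemma \ref{lemma:glue}: the distances between vertices are overwritten by the corresponding $d_X$-distances, producing a metric $\widetilde d$ that is only \emph{locally} isometric to $d_h$ but is Gromov--Hausdorff close to $(X,d_X)$. The remaining difficulty you correctly identify --- propagating the quasisymmetry estimate uniformly across the mesh scale after this surgery --- is handled in the paper by Theorem \ref{theorem:approximation_quasisymmetric} and Lemma \ref{lemma:wqs}, and is substantially more involved than a ``three-point comparison''; without the gluing step, however, that analysis does not even get off the ground.
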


We remark that Bonk and Kleiner, in their main theorem \cite{BonkKleiner:quasisphere}*{Theorem 11.1}, prove that each $2$-dimensional quasisphere admits good graph approximations that satisfy condition \ref{smooth:llc-mod}\ref{smooth:modulus} with discrete modulus in place of conformal modulus. The definition of those graph approximations as well as the discrete modulus condition are technical to state and handle. In contrast, conformal modulus can be defined directly on a surface of finite area using the Hausdorff $2$-measure or equivalently (if the surface is smooth) in local coordinates and is perhaps a more tangible object than discrete modulus on graph approximations of a space. We remark that conformal modulus and discrete modulus are not comparable in general, even in the setting of quasispheres of finite area. This is illustrated by the fact that not every quasisphere of finite area satisfies \ref{smooth:llc-mod}\ref{smooth:modulus}, as already discussed, but as a consequence of the result of Bonk and Kleiner the discrete analogue of that condition is always satisfied. Hence, our main theorem is not a consequence of \cite{BonkKleiner:quasisphere}*{Theorem 11.1}.

On the other hand, as a corollary of Theorem \ref{theorem:main:approximation} and Theorem \ref{theorem:reciprocal}, one can always approximate a quasisphere by smooth quasispheres that satisfy the conformal modulus analogue \ref{smooth:llc-mod}\ref{smooth:modulus} of the condition of Bonk--Kleiner.

\begin{corollary}
Every $2$-dimensional quasisphere is the Gromov--Hausdorff limit of a sequence of $2$-dimensional quasispheres that are doubling, linearly locally connected, $2$-Loewner, reciprocal, and satisfy condition \ref{smooth:llc-mod}\ref{smooth:modulus}, quantitatively.
\end{corollary}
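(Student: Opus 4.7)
The plan is to combine Theorem \ref{theorem:main:approximation} with the quantitative equivalences in Theorem \ref{theorem:reciprocal}. Given a $2$-dimensional quasisphere $Y$, I first represent it as $(S^2, d_Y)$, where $d_Y$ is a metric on the topological sphere making the identity a quasisymmetry from the round sphere $(S^2, d_g)$ to $(S^2, d_Y)$. Applying Theorem \ref{theorem:main:approximation} with $X = S^2$ and $g$ the round metric yields Riemannian metrics $g_k$ and metrics $d_k$ on $S^2$ locally isometric to $d_{g_k}$ such that $(S^2, d_k)$ converges to $(S^2, d_Y)$ in the Gromov--Hausdorff metric, and each $(S^2, d_k)$ is $\eta'$-quasisymmetric to $(S^2, d_g)$ for a distortion function $\eta'$ depending only on the $\eta$-distortion of $d_Y$. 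Composing with the original quasisymmetry from $(S^2, d_g)$ to $Y$, each $(S^2, d_k)$ is a quasisphere with uniform distortion.

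Next I verify the hypotheses needed to apply Theorem \ref{theorem:reciprocal} to each $(S^2, d_k)$. Because $d_k$ is locally isometric to the smooth intrinsic Riemannian metric $d_{g_k}$ and $S^2$ is compact, the space $(S^2, d_k)$ is itself a Riemannian $2$-sphere (up to isometry), so it has finite Hausdorff $2$-measure. It is also reciprocal by the classical uniformization theorem, as recalled in the excerpt just before Theorem \ref{theorem:reciprocal}, with a universal bound on the reciprocity constants for any smooth Riemannian $2$-sphere.

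Invoking Theorem \ref{theorem:reciprocal} quantitatively for each $(S^2, d_k)$, the fact that each one is a reciprocal metric $2$-sphere of finite Hausdorff $2$-measure and a quasisphere with uniform $\eta'$-distortion yields uniform constants for the doubling property, linear local connectivity, the $2$-Loewner property, and the modulus bound in condition \ref{smooth:llc-mod}\ref{smooth:modulus}. Combined with uniform reciprocity, the sequence $(S^2, d_k)$ satisfies every property asserted in the corollary and converges to $Y$ in the Gromov--Hausdorff metric, finishing the proof.

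I expect the only genuine obstacle to be verifying the reciprocity and finite Hausdorff measure of the approximating spaces $(S^2, d_k)$, together with the assertion that the parameters in Theorem \ref{theorem:reciprocal} depend only on the reciprocity constants and the quasisymmetric distortion function (and not, for example, on the total area). Both of the first two properties follow once one observes that $d_k$, being locally isometric to a smooth Riemannian intrinsic metric on a compact closed surface, makes $(S^2, d_k)$ globally a Riemannian $2$-sphere; the third is a matter of tracking the quantitative dependencies in Theorem \ref{theorem:main:approximation} and Theorem \ref{theorem:reciprocal}, both of which are asserted to hold quantitatively.
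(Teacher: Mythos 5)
Your overall strategy -- feed the output of Theorem \ref{theorem:main:approximation} into Theorem \ref{theorem:reciprocal} -- is exactly what the paper intends (it states the corollary as an immediate consequence of those two theorems and gives no separate proof), and the quantitative bookkeeping you describe is correct: the approximating spaces are uniformly $\eta'$-quasisymmetric to the round sphere with $\eta'$ depending only on $\eta$, so Theorem \ref{theorem:reciprocal} yields uniform constants in \ref{smooth:llc-mod} and \ref{smooth:loewner}.

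However, one step of your justification is false as stated: a compact metric space that is \emph{locally} isometric to the intrinsic metric of a Riemannian metric need not be \emph{globally} isometric to a Riemannian $2$-sphere. Indeed, the construction behind Theorem \ref{theorem:main:approximation} produces $\widetilde d$ by gluing $d_h$ with a non-length metric $d_{\mathcal S}$ on the vertex set, so $\widetilde d\leq d_h$ with strict inequality at large scales in general; the paper emphasizes that $(Y,\widetilde d)$ is typically not a length space (otherwise it could not be Gromov--Hausdorff close to a non-length target such as $d_X$). So you cannot conclude that $(S^2,d_k)$ ``is itself a Riemannian $2$-sphere up to isometry.'' The repair is easy and is the real reason the corollary works: since the identity map $(S^2,d_{g_k})\to (S^2,d_k)$ is a homeomorphism of a compact space that is a local isometry (Lemma \ref{lemma:glue}\ref{g:local_isom}), it preserves Hausdorff $2$-measure, lengths of curves, and hence the $2$-modulus of every curve family. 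Finite area and reciprocity are modulus/measure-theoretic properties and therefore transfer from the genuine Riemannian sphere $(S^2,d_{g_k})$ (reciprocal by classical uniformization, with universal constants) to $(S^2,d_k)$. With that substitution your argument is complete.
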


Theorem \ref{theorem:main:approximation} is similar in spirit to the main results in \cites{NtalampekosRomney:length, NtalampekosRomney:nonlength}, which assert that every metric surface of locally finite area can be approximated in the Gromov--Hausdorff sense by polyhedral surfaces of controlled geometry. This result has been generalized to higher dimensions by Marti and Soultanis under some additional geometric assumptions \cite{MartiSoultanis:metric_fundamental_class}. Our argument for Theorem \ref{theorem:main:approximation} is very robust and we expect that if one imposes further properties on a quasisphere, such as Ahlfors $2$-regularity or finiteness of area, then the smooth approximating surfaces will also have these properties.

\subsection{Bi-Lipschitz surfaces}
We include a discussion on metric surfaces that are bi-Lipschitz to smooth surfaces. The analogues of Theorem \ref{theorem:main:approximation} and Theorem \ref{theorem:main:approximation:rloc} remain valid if one replaces quasisymmetric with bi-Lipschitz maps. Let $(X,d)$ be a metric space. For $\lambda\geq 1$ we denote by $BL(X,d,\lambda)$ the collection of metric spaces $Y$ with the property that there exists a $\lambda$-bi-Lipschitz map from $(X,d)$ onto $Y$. 

\begin{theorem}\label{theorem:main:approximation:bilip}
For each $\lambda\geq 1$ there exists $\lambda'\geq 1$ such that for each compact Riemannian $2$-manifold $X$  with intrinsic metric $d_g$ we have
$$BL(X,d_g,\lambda)\subset \overline{ R_{\loc}(X)\cap BL(X,d_g,\lambda')}.$$
\end{theorem}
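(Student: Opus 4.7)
The plan is to adapt the construction underlying Theorem \ref{theorem:main:approximation:rloc} to the bi-Lipschitz setting, where the distortion bookkeeping simplifies considerably due to the scale-invariance of bi-Lipschitz comparison. By composing with the assumed bi-Lipschitz map, I may assume that the identity $(X,d_g)\to(X,d_X)$ is $\lambda$-bi-Lipschitz. The goal is then to produce, for each $k\in\N$, a metric $d_k\in R_{\loc}(X)$ that is $\lambda'$-bi-Lipschitz to $d_g$ (with $\lambda'$ depending only on $\lambda$) and satisfies $(X,d_k)\to(X,d_X)$ in the Gromov--Hausdorff distance.

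I would fix a sequence of geodesic triangulations $\mathcal{T}_k$ of $(X,d_g)$ whose mesh (in $d_g$) tends to zero and whose triangles are uniformly nondegenerate. On each triangle $\Delta\in\mathcal{T}_k$, I would replace the restriction of $g$ with the unique flat metric on $\Delta$ whose edge lengths equal the $d_X$-distances between the three vertices, obtaining a polyhedral metric $\rho_k$ on $X$. Since the new edge lengths are $\lambda$-bi-Lipschitz comparable to the original ones and the mesh is small compared with the curvature scale of $g$, an elementary comparison of polygonal path lengths yields
\[
C(\lambda)^{-1} d_g \leq \rho_k \leq C(\lambda)\, d_g \quad\text{and}\quad (1-o(1))\, d_X \leq \rho_k \leq (1+o(1))\, d_X,
\]
where $C(\lambda)$ depends only on $\lambda$ and on the fixed shape parameters of $\mathcal{T}_k$. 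To upgrade $\rho_k$ to an element of $R_{\loc}(X)$, I would smooth the conical singularities at the vertices of $\mathcal{T}_k$ via a standard partition-of-unity modification of the piecewise-flat Riemannian tensor, supported in neighborhoods of the vertices of radius much smaller than the triangulation mesh. This smoothing introduces only a multiplicative distortion $1+\epsilon_k$ with $\epsilon_k\to 0$, producing a genuine Riemannian metric $g_k$ on $X$ whose intrinsic metric $d_k$ lies in $R_{\loc}(X)$, is $C(\lambda)(1+\epsilon_k)$-bi-Lipschitz to $d_g$, and remains $(1+o(1))$-bi-Lipschitz, hence Gromov--Hausdorff close, to $d_X$.

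The main obstacle is controlling the smoothing step so that it introduces only $1+o(1)$ multiplicative distortion uniformly in $k$. This requires verifying that the cone angles of $\rho_k$ at the vertices of $\mathcal{T}_k$ are uniformly bounded away from $0$ and from $\infty$, so that each cone can be rounded off to a flat Euclidean disk model with controlled distortion. The $\lambda$-bi-Lipschitz comparison confines the recomputed edge-length ratios inside each triangle to the interval $[\lambda^{-2},\lambda^2]$, and combined with the uniform nondegeneracy of $\mathcal{T}_k$ in $d_g$ this restricts every individual angle of $\rho_k$, and hence the total cone angle at each vertex, to a compact subinterval of $(0,\infty)$ depending only on $\lambda$ and the shape parameters of $\mathcal{T}_k$. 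Once this uniform angle control is in place, the smoothing is a routine local construction, and setting $\lambda':=2C(\lambda)$ gives a uniform bi-Lipschitz constant valid for all sufficiently large $k$.
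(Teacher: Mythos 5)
Your construction produces for each $k$ a \emph{length} metric $\rho_k$ (and, after smoothing, an honest Riemannian intrinsic metric), and this is where the argument breaks: the target $d_X$ is only assumed bi-Lipschitz to $d_g$, so it need not be a length metric, and a sequence of length metrics cannot Gromov--Hausdorff converge to it in general. Concretely, take $X=\mathbb S^2$ with $d_g$ the round metric and $d_X$ the chordal metric inherited from $\R^3$; these are $(\pi/2)$-bi-Lipschitz. Your $\rho_k$ has edge lengths equal to chordal distances between nearby vertices, which agree with geodesic distances up to a factor $1+o(1)$, so $\rho_k$ converges to the round metric; but for antipodal points the round distance is $\pi$ while $d_X$ gives $2$, so the claimed bound $\rho_k\leq (1+o(1))\,d_X$ fails and $(X,\rho_k)$ does not converge to $(X,d_X)$. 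What your scheme actually approximates is the length metric induced by $d_X$, not $d_X$ itself. This is exactly why the paper does not stop at a smooth length space $(Y,d_h)$: it then \emph{glues} $d_h$ with a discrete metric $d_{\mathcal S}$ recording the $d_X$-distances between \emph{all pairs} of vertices (Lemma \ref{lemma:glue}), producing a metric $\widetilde d$ that is only \emph{locally} isometric to the Riemannian one (hence lies in $R_{\loc}(X)$, which permits non-length metrics) but globally takes the shortcuts present in $d_X$. Without this gluing step there is no mechanism for GH-convergence to a non-intrinsic target, and no amount of care in the smoothing can repair this.

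A second, independent gap is the claimed uniform nondegeneracy of the flat comparison triangles. The bi-Lipschitz hypothesis bounds the \emph{ratios} of the three recomputed edge lengths by $\lambda^2$, but a Euclidean triangle with side lengths $1,1,2$ has bounded ratios and is degenerate; nothing in your setup prevents the triangle inequality from being (nearly) saturated, in which case the flat triangle collapses, its angles degenerate to $0$ and $\pi$, and both the bi-Lipschitz comparison with $d_g$ on that triangle and the cone-angle control needed for your smoothing fail. The paper's Lemma \ref{lemma:triangle} is designed precisely for this: instead of a single flat triangle it cones the three prescribed edges to an apex lifted off the plane, yielding three uniformly nondegenerate $2$-simplices even when the planar comparison triangle is degenerate; the resulting complex is then smoothed by Cattalani's theorem rather than by an ad hoc rounding of cone points.
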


The proofs of Theorems \ref{theorem:main:approximation} and \ref{theorem:main:approximation:bilip} follow exactly the same scheme, but the verification of the bi-Lipschitz property is far more elementary than the quasisymmetric property. 

Although a quantitative characterization of smooth quasispheres is available thanks to Theorem \ref{theorem:smooth}, this is not the case with smooth bi-Lipschitz spheres, i.e., Riemannian $2$-spheres that are bi-Lipschitz equivalent to the Euclidean $2$-sphere. So far there are some sufficient conditions so that a smooth sphere or plane is bi-Lipschitz equivalent to the Euclidean sphere or plane, respectively. For example, Fu \cite{Fu:biLipschitz} and Bonk--Lang \cite{BonkLang:biLipschitz} prove that certain bounds on the integral Gaussian curvature provide a sufficient condition for quantitative bi-Lipschitz parametrization. The result of Fu strengthens an earlier result of Toro 
\cites{Toro:LipschitzManifolds, Toro:biLipschitz}, implying that graphs of functions in the Sobolev space $W^{2,2}(\R^2)$ admit local bi-Lipschitz parametrizations, quantitatively, depending on the $W^{2,2}$ norm. See also the related works \cites{Semmes:hypersurfaces, MullerSverak:surfaces}. %The higher dimensional version of the problem has been studied in \cites{BonkHeinonenSaksman:flows, Wang:curvature, ChangPrywesYang:flows}.

\subsection{Proof sketch}
The proofs of Theorems \ref{theorem:main:approximation} and \ref{theorem:main:approximation:bilip} are given in Section \ref{section:proof}. We present here a sketch of the proof of the main result, Theorem \ref{theorem:main:approximation}. We describe the basic steps and accompany them with an illustration of the argument in Figure \ref{figure:sketch}. The first step is to triangulate the smooth surface $(X,d_g)$. By a result of Bowditch \cite{Bowditch:triangulation}, one can find a polyhedral surface $(Z,d_Z)$ consisting of equilateral triangles of equal side length and a (uniformly) bi-Lipschitz map $\tau\colon (Z,d_Z)\to (X,d_g)$; this is where the compactness of $X$ is used. The size of the triangles can be taken to be arbitrarily small.

The (smooth) triangulation of $(X,d_g)$ gives a (fractal) triangulation of $(X,d_X)$ via the identity map, which is assumed to be quasisymmetric. The idea of the proof is to replace each fractal triangle of $(X,d_X)$ with a polyhedral surface that is quasisymmetric to the original triangle. All polyhedral surfaces considered in the proof have a nice geometric structure in the sense that they consist of triangles, each of which must be uniformly bi-Lipschitz to an equilateral triangle and a uniformly bounded number of triangles can meet at a point. Such objects are termed quasiconformal simplicial complexes and we study them in detail in Section \ref{section:simplicial}.

\begin{figure}
\centering
\input{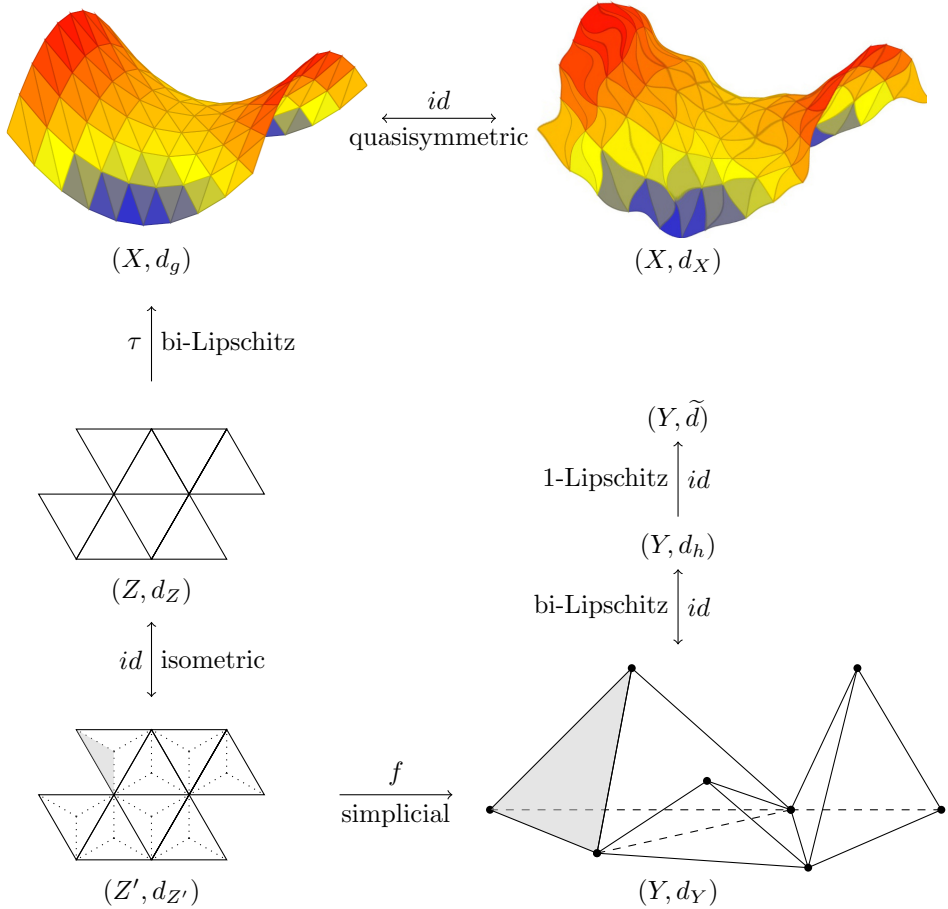}
\caption{Sketch of the proof of Theorem \ref{theorem:main:approximation}.}\label{figure:sketch}
\end{figure}

In order to construct those simplicial complexes that are going to replace the fractal triangles of $(X,d_X)$, we work with the triangles of $(Z,d_Z)$. Suppose that a triangle $S$ of $(Z,d_Z)$ has vertices $p_1,p_2,p_3$. The corresponding fractal triangle in $(X,d_X)$ has vertices $\tau(p_1),\tau(p_2),\tau(p_3)$. Note that one can construct in the plane a possibly degenerate triangle with side lengths $d_X(\tau(p_i),\tau(p_j))$, $i\neq j$. Using that triangle one can construct a quasiconformal simplicial complex $K_S$ homeomorphic to a disk, whose boundary consists of three line segments of side lengths $d_X(\tau(p_i),\tau(p_j))$, $i\neq j$. Pasting together those simplicial complexes gives a simplicial map $f$ (i.e., linear on triangles) from a suitable subdivision $(Z',d_{Z'})$ of $(Z,d_Z)$ onto a quasiconformal simplicial complex $(Y,d_Y)$ as shown in Figure \ref{figure:sketch}. 

The next step is to find a uniformly bi-Lipschitz map from the simplicial complex $(Y,d_Y)$ onto a smooth Riemannian surface $(Y,d_h)$. This is achieved by a recent result of Cattalani \cite{Cattalani:smoothing}, which allows the bi-Lipschitz smoothing of quasiconformal simplicial complexes, quantitatively. 

Finally, the last and most technical step is to bring the surface $(Y,d_h)$ close to $(X,d_X)$ while ensuring at the same time that the constructed surface remains quasisymmetric to $(X,d_g)$ with uniform parameters. Note that $(Y,d_h)$ is a length space and there is no reason for it to be close to the space $(X,d_X)$, which might be very far from being a length space. 

In order to bring the space $(Y,d_h)$ close to $(X,d_X)$ in the Gromov--Hausdorff sense, one can simply change the distance between the vertices of $(Y,d_h)$ and declare it to be equal to the distance between the corresponding vertices of $(X,d_X)$. This gives a metric space $(Y,\widetilde d)$ that is locally isometric to the smooth space $(Y,d_h)$, so in a sense $(Y,\widetilde d)$ is a smooth space. See Lemma \ref{lemma:glue} for the precise procedure of changing the metric on vertices. What remains to be done is to show that $f\circ \tau^{-1}$ gives a uniformly quasisymmetric map from the original Riemannian surface $(X,d_g)$ onto the constructed smooth surface $(Y,\widetilde d)$. This is achieved with the aid of Theorem \ref{theorem:approximation_quasisymmetric}, which is formulated in the language of approximations of metric spaces, a notion originally introduced by Bonk and Kleiner. 

Theorem \ref{theorem:approximation_quasisymmetric} is the most elaborate result of the paper and provides quantitative control of the quasisymmetric distortion function $\eta'$ of the resulting map $f\circ \tau^{-1}\colon (X,d_g)\to (Y,\widetilde d)$. Remarkably, and as illustrated more clearly in Theorem \ref{theorem:main:approximation:rloc}, the distortion function $\eta'$ depends only on the distortion function $\eta$ of the quasisymmetric map $(X,d_g)\to (X,d_X)$ and not on the geometry of the compact surface $(X,d_g)$, which could be uncontrolled; for example, $(X,d_g)$ could be a doubling or $\llc$ metric space with bad (i.e., quite large) parameters.

\subsection*{Acknowledgments}
I would like to thank Daniel Meyer for a motivating discussion on the topic and for posing Question \ref{question:approximation} during the Quasiworld Workshop (Helsinki, August 2023), which gave rise to this project.

\section{Approximations of metric spaces and quasisymmetries}\label{section:approximation}
In this section we introduce a variant of an approximation of a metric space, as defined by Bonk and Kleiner \cite{BonkKleiner:quasisphere}*{Section 4}. Roughly speaking, an approximation of a metric space is a graph on the space with controlled combinatorial and metric properties. The main result of the section is Theorem \ref{theorem:approximation_quasisymmetric} and it provides sufficient conditions so that a homeomorphism between metric spaces that respects a pair of approximations of those spaces is quasisymmetric, quantitatively. This section can be read independently of the other parts of the paper.

\subsection{Preliminaries}\label{section:prelim}
For quantities $A$ and $B$ we write $A\lesssim B$ if there exists a constant $c>0$ such that $A\leq cB$. If the constant $c$ depends on another quantity $H$ that we wish to emphasize, then we write instead $A\leq c(H)B$ or $A\lesssim_H B$. Moreover, we use the notation $A\simeq B$ if $A\lesssim B$ and $B\lesssim A$. As previously, we write $A\simeq_H B$ to emphasize the dependence of the implicit constants on the quantity $H$. All constants in the statements are assumed to be positive even if this is not stated explicitly and the same letter may be used in different statements or within the same proof to denote a different constant.  

Let $(X,d)$ be a metric space. We denote by $B_d(x,r)$ (resp.\ $\br{B_d}(x,r)$) the open (resp.\ closed) ball centered at $x$ with radius $r>0$. For a set  $A\subset X$ and $r>0$ we denote by $N_d(A,r)$ the open $r$-neighborhood of $A$ and by $\diam_d(A)$ the diameter of $A$. Often we drop the subscript $d$ from the above notation when this does not lead to a confusion.

Let $(X,d_X)$ and $(Y,d_Y)$ be metric spaces and $f\colon X\to Y$ be a homeomorphism. We say that $f$ is \textit{quasisymmetric} if there exists a homeomorphism $\eta\colon[0,\infty)\to [0,\infty)$ such that for every triple of distinct points $x,y,z\in X$ we have
$$\frac{d_Y(f(x),f(y))}{d_Y(f(x),f(z))}\leq \eta\left(\frac{d_X(x,y)}{d_X(x,z)}\right).$$
In that case we say that $f$ is $\eta$-quasisymmetric. A homeomorphism $\eta\colon [0,\infty)\to[0,\infty)$ as above is called a \textit{distortion function}. A homeomorphism $f\colon X \to Y$ is \textit{bi-Lipschitz} if there exists $\lambda\geq1$ such that 
$$\lambda^{-1}d_X(x,y)\leq d_Y(f(x),f(y))\leq \lambda d_X(x,y)$$
for every $x,y\in X$. A metric space $X$ is called an \textit{$n$-dimensional quasisphere}, where $n\geq1$, if there exists a quasisymmetric map $f\colon \mathbb S^n\to X$, where $\mathbb S^n$ denotes the Euclidean $n$-dimensional unit sphere in $\R^{n+1}$.

\begin{lemma}[\cite{Heinonen:metric}*{Proposition 10.8}]\label{lemma:qs_heinonen}
Let $X,Y$ be metric spaces, $\eta\colon [0,\infty)\to [0,\infty)$ be a homeomorphism, and $f\colon X\to Y$ be an $\eta$-quasisymmetric homeomorphism. If $A\subset B\subset X$ and $0<\diam(A)\leq \diam(B)<\infty$, then 
\begin{align*}
\frac{1}{2\eta\left(\frac{\diam(B)}{\diam(A)}\right)} \leq \frac{\diam(f(A))}{\diam(f(B))}\leq \eta \left(\frac{2\diam(A)}{\diam(B)}\right).
\end{align*}
\end{lemma}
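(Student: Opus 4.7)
The plan is to derive both inequalities directly from the $\eta$-quasisymmetry condition, using the following elementary halving observation: in any bounded metric space $E$ and for any base point $p\in E$, for every $\varepsilon>0$ there exists $q\in E$ with $d(p,q)>(\diam(E)-\varepsilon)/2$. Indeed, pick $y_1,y_2\in E$ with $d(y_1,y_2)>\diam(E)-\varepsilon$; by the triangle inequality at least one of $d(p,y_1), d(p,y_2)$ exceeds $(\diam(E)-\varepsilon)/2$. This principle lets me always extract, from a bounded set, a point that is far from any pre-chosen reference point, which is exactly what is needed to turn pointwise quasisymmetry inequalities into diameter-comparison statements.

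For the upper bound, I would fix $\varepsilon>0$, pick $x_1,x_2\in A$ with $d_Y(f(x_1),f(x_2))>\diam(f(A))-\varepsilon$, and apply the halving principle inside $B$ with base point $x_1$ to obtain $z\in B$ with $d_X(x_1,z)>(\diam(B)-\varepsilon)/2$. Applying the definition of $\eta$-quasisymmetry to the triple $(x_1,x_2,z)$, monotonicity of $\eta$ together with the obvious bounds $d_X(x_1,x_2)\leq \diam(A)$ and $d_Y(f(x_1),f(z))\leq \diam(f(B))$ give
$$\diam(f(A))-\varepsilon < \eta\!\left(\frac{2\diam(A)}{\diam(B)-\varepsilon}\right)\diam(f(B)).$$
Sending $\varepsilon\to 0$ and invoking continuity of $\eta$ yields the upper bound.

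For the lower bound, I would choose $b_1,b_2\in B$ with $d_Y(f(b_1),f(b_2))>\diam(f(B))-\varepsilon$ and $a_1,a_2\in A$ with $d_X(a_1,a_2)>\diam(A)-\varepsilon$. The triangle inequality applied in $Y$ to $(f(a_1),f(b_1),f(b_2))$ shows that one of $d_Y(f(a_1),f(b_1)), d_Y(f(a_1),f(b_2))$ is at least $(\diam(f(B))-\varepsilon)/2$; say the former. Applying $\eta$-quasisymmetry to the triple $(a_1,b_1,a_2)$ and using $d_X(a_1,b_1)\leq \diam(B)$ together with $d_Y(f(a_1),f(a_2))\leq \diam(f(A))$ gives
$$\frac{\diam(f(B))-\varepsilon}{2}\leq \eta\!\left(\frac{\diam(B)}{\diam(A)-\varepsilon}\right)\diam(f(A)),$$
and $\varepsilon\to 0$ finishes the argument. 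The entire proof is routine once the halving observation is made, and I do not anticipate any genuine obstacle; this is a textbook lemma whose only mild subtlety is that diameters are suprema rather than maxima, which is precisely what forces the factor of $2$ on the lower-bound side.
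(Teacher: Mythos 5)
Your argument is correct and is essentially the standard textbook proof. The paper does not give its own proof of this lemma---it is quoted from Heinonen's book (Proposition 10.8)---but the approach you describe is the one used there: the ``halving'' observation produces a point far from a chosen base point, which is exactly what is needed to convert the pointwise quasisymmetry inequality into a comparison of diameters, and the $\varepsilon$-limiting handles the fact that diameters are suprema rather than maxima.
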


\subsection{Gluing metrics}
A metric space $(Y,d_Y)$ is called \textit{discrete} if for each $x\in Y$ there exists $r>0$ such that $B(x,r)=\{x\}$. The next result describes the process of gluing a new  metric in a subset of a given metric space $(Y,d_Y)$.

\begin{lemma}\label{lemma:glue}
Let $(Y,d_Y)$ be a metric space and $S\subset Y$ be a closed set with a metric $d_S$ such that $d_S\leq d_Y$ on $S\times S$. For $x,y\in Y$, define
\begin{align*}
\widetilde d(x,y)= \min\{d_Y(x,y), \inf_{u,v\in S} \{d_Y(x,u)+d_S(u,v)+d_Y(v,y)\}\}.
\end{align*}
The following statements are true. 
\begin{enumerate}[label=\normalfont{(\arabic*)}]
	\item\label{g:metric} $(Y,\widetilde d)$ is a metric space.
	\item\label{g:cases} If $x,y\in S$, then $\widetilde d(x,y)=d_S(x,y)$. If $x\in Y\setminus S$ and $y\in S$, then
	$$\widetilde d(x,y)=\inf_{u\in S}\{d_Y(x,u)+d_S(u,y)\}.$$ 
	\item\label{g:isom} For each $x,y\in Y$ and $r=\dist_{\widetilde d}(x,S\setminus \{x\})$, if $\widetilde d(x,y)< r$, then $\widetilde d(x,y)=d_Y(x,y)$. In addition,  the identity map restricts to an isometric homeomorphism between $B_{d_Y}(x,r/3)$ and $B_{\widetilde d}(x,r/3)$.
	\item\label{g:homeo} If $(S,d_S)$ is topologically equivalent to $(S,d_Y)$, then the identity map from $(Y,d_Y)$ onto $(Y,\widetilde d)$ is a $1$-Lipschitz homeomorphism.
	\item\label{g:local_isom}If $(S,d_S)$ is a discrete metric space, then the identity map from $(Y,d_Y)$ onto $(Y,\widetilde d)$ is a local isometry.
	\item\label{g:lambda}If $d_S\geq \lambda^{-1} d_Y$ on $S\times S$ for some $\lambda\geq 1$, then $\lambda^{-1}d_Y\leq \widetilde d\leq d_Y$.
\end{enumerate}
\end{lemma}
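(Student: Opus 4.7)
The plan is to treat (1) and (2) as direct computations from the definition, then prove the locality statement (3), which is the central step, and finally derive (4)--(6) as rapid consequences. Throughout I will write $A(x,y) := d_Y(x,y)$ and $B(x,y) := \inf_{u,v \in S}\{d_Y(x,u)+d_S(u,v)+d_Y(v,y)\}$ for the two branches of the minimum.

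For (1), symmetry is immediate from swapping $(u,v)$ in the infimum defining $B(x,y)$; positivity at $x \neq y$ uses the hypothesis $d_S \leq d_Y$ on $S \times S$ together with closedness of $S$: a sequence in $S \times S$ driving $B(x,y)$ to $0$ would converge to $(x,y)$ in $d_Y$, hence in $d_S$, forcing $d_S(x,y)=0$. The triangle inequality reduces to the nontrivial case where both $\widetilde d(x,z)$ and $\widetilde d(z,y)$ are realized via the $B$-branch with near-minimizers $(u_1,v_1)$ and $(u_2,v_2)$; concatenation together with the $d_Y$-triangle inequality on $d_Y(v_1,z)+d_Y(z,u_2)$, the pointwise estimate $d_Y(v_1,u_2) \geq d_S(v_1,u_2)$, and the $d_S$-triangle inequality produces a competitor $d_Y(x,u_1)+d_S(u_1,v_2)+d_Y(v_2,y)$ bounding $B(x,y)$. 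The mixed cases (one $A$-branch and one $B$-branch) are strictly easier. For (2), when $x,y \in S$ the upper bound $\widetilde d \leq d_S$ comes from $u=x$, $v=y$, and the lower bound uses $d_Y \geq d_S$ on $S \times S$ summand-by-summand; the formula for $x \notin S$, $y \in S$ follows analogously, with $u=v=y$ giving $B(x,y) \leq d_Y(x,y)$.

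The heart of the argument is (3). Given $\widetilde d(x,y) < r := \dist_{\widetilde d}(x, S \setminus \{x\})$, I will examine a near-minimizer $(u,v) \in S \times S$ realizing $B(x,y)$ to within a tiny error so that $d_Y(x,u)+d_S(u,v)+d_Y(v,y) < r$, and note that then $\widetilde d(x,u) \leq d_Y(x,u) < r$; this forces $u \in S \cap B_{\widetilde d}(x,r) \subset \{x\}$, and symmetrically $v=x$, collapsing $B(x,y)$ to $d_Y(x,y)$. For the isometry assertion, applying the previous statement with $y=x$ first yields $B_{d_Y}(x,r/3) = B_{\widetilde d}(x,r/3)$ as sets. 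Then for $y,z$ in this common ball, any near-minimizer $(u,v)$ for $B(y,z)$ satisfies $d_Y(y,u) \leq d_Y(y,z) < 2r/3$, hence $d_Y(x,u) < r$; the same trap forces $u=v=x$, so $B(y,z) = d_Y(y,x)+d_Y(x,z) \geq d_Y(y,z)$, and combining with $\widetilde d \leq d_Y$ gives $\widetilde d(y,z) = d_Y(y,z)$.

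Parts (4)--(6) follow quickly. Part (6) is immediate from $d_S(u,v) \geq \lambda^{-1}d_Y(u,v)$ and the $d_Y$-triangle inequality $d_Y(x,u)+d_Y(u,v)+d_Y(v,y) \geq d_Y(x,y)$. For (5), by (3) it suffices to check $\dist_{\widetilde d}(y, S \setminus \{y\})>0$ at every $y$: when $y \notin S$, closedness of $S$ in $(Y,d_Y)$ and the formula from (2) give $\widetilde d(y,s) \geq \dist_{d_Y}(y,S)>0$ for all $s \in S$; when $y \in S$, discreteness of $(S,d_S)$ at $y$ combined with $\widetilde d|_{S \times S} = d_S$ closes the argument. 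For (4), only continuity of the identity $(Y,\widetilde d) \to (Y,d_Y)$ at a point $y \in S$ is nontrivial: if $\widetilde d(y,z) < \epsilon$ with $z \notin S$, part (2) supplies a near-minimizer $u \in S$ with $d_S(u,y) < \epsilon$ and $d_Y(u,z) < \epsilon$, and the topological equivalence of $(S,d_S)$ and $(S,d_Y)$ upgrades the former to the smallness of $d_Y(u,y)$; the $d_Y$-triangle inequality then bounds $d_Y(y,z)$. The main obstacle will be the triangle inequality in (1) in the two-$B$-branches case and the isometry assertion in (3); both hinge on the same trick of simultaneously bounding several $d_Y$-distances by $r$ (or by $\widetilde d(x,z)+\widetilde d(z,y)$) so that the hypothesis $d_S \leq d_Y$ on $S \times S$ can be deployed to merge two $d_Y$-$d_S$-$d_Y$ chains into a single one.
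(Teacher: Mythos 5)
Your proposal is correct, and the approach matches the paper's essentially step for step: a case analysis on whether $x$ lies in $S$, the $d_Y$-triangle-inequality/$d_S\le d_Y$ merging trick for the triangle inequality, and deriving (3)--(5) via the same locality mechanism. The near-minimizer framing of (3) is just the contrapositive packaging of the paper's direct lower bound on every $d_Y$-$d_S$-$d_Y$ chain, and your (5) is slightly more streamlined (it avoids routing through (4), which the paper does). One small imprecision: in (3), after forcing $u=x$, the conclusion $v=x$ is not literally ``symmetric''. It follows because the remaining chain gives $d_S(x,v)<r$, and since $x,v\in S$ part (2) identifies $\widetilde d(x,v)=d_S(x,v)<r$, which rules out $v\in S\setminus\{x\}$; you should make that dependence on part (2) explicit. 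Similarly, when $x\notin S$ the ``trap'' $u\in S\cap B_{\widetilde d}(x,r)\subset\{x\}$ is vacuous and the correct reading is that no admissible near-minimizer exists, so $B(x,y)\ge r>\widetilde d(x,y)$; worth stating, but not a gap.
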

We say that $\widetilde d$ is the \textit{metric arising from gluing $d_Y$ with $d_S$}.

\begin{proof}
Note that $\widetilde d$ is clearly symmetric. If $\widetilde d(x,y)=0$, then either $d_Y(x,y)=0$, so $x=y$, or there exist points $u_n,v_n\in S$, $n\in \N$, such that $d_Y(x,u_n)+d_S(u_n,v_n)+d_Y(v_n,y)\to 0$ as $n\to\infty$. The assumption that $S$ is closed in $(Y,d_Y)$ implies that $x,y\in S$.  Also, since $d_S\leq d_Y$, we have $u_n\to x$ in $d_S$ and $v_n\to y$ in $d_S$. Since $d_S(u_n,v_n)\to 0$, we conclude that $d_S(x,y)=0$, so $x=y$.  Finally, for the triangle inequality, let $x,y,z\in Y$ and $s,t,u,v\in S$. Since $d_S\leq d_Y$ on $S\times S$, we have
\begin{align*}
\widetilde d(x,y)&\leq d_Y(x,u)+d_S(u,v)+d_Y(v,y)\\
&\leq d_Y(x,u)+d_S(u,s)+ d_S(s,t)+d_S(t,v)+d_Y(v,y)\\
&\leq (d_Y(x,u)+d_S(u,s)+d_Y(s,z)) + (d_Y(z,t)+d_S(t,v)+d_Y(v,y)).
\end{align*}
Also, by the definition of $\widetilde d$, one obtains immediately the estimates
\begin{align*}
\widetilde d(x,y) &\leq  d_Y(x,z) + (d_Y(z,t)+d_S(t,v)+d_Y(v,y)),\\
\widetilde d(x,y) &\leq  (d_Y(x,u)+d_S(u,s)+d_Y(s,z)) + d_Y(z,y),\\
\widetilde d(x,y)&\leq d_Y(x,y)\leq d_Y(x,z)+d_Y(z,y)
\end{align*}
Taking infimum over all $s,t,u,v$ gives the triangle inequality, completing the proof of \ref{g:metric}.

Now, we prove \ref{g:cases}. If $x,y\in S$, then for every $u,v\in S$ we have
\begin{align*}
d_S(x,y) \leq d_S(x,u)+d_S(u,v)+d_S(v,y)\leq d_Y(x,u)+d_S(u,v)+d_Y(v,y),
\end{align*}
because $d_S\leq d_Y$. This shows that $\widetilde d(x,y)\geq d_S(x,y)$. Moreover, equality holds for $u=x$ and $v=y$. Thus, $\widetilde d(x,y)=d_S(x,y)$. Now, let $x\in Y\setminus S$ and $y\in S$, as in the second part of \ref{g:cases}. It is clear, by the definition of the infimum, that
\begin{align*}
d_Y(x,y)&\geq \inf_{u\in S} \{d_Y(x,u)+d_S(u,y)\} \\
&\geq  \inf_{u,v\in S} \{d_Y(x,u)+d_S(u,v)+d_Y(v,y)\}=\widetilde d(x,y).
\end{align*}
On the other hand, for every $u,v\in S$ we have
$$d_Y(x,u)+d_S(u,v)+d_Y(v,y)\geq d_Y(x,u)+d_S(u,v)+d_S(v,y)\geq d_Y(x,u)+d_S(u,y).$$ 
This shows that 
$$\widetilde d(x,y)\geq \inf_{u\in S} \{d_Y(x,u)+d_S(u,y)\},$$
which completes the proof of \ref{g:cases}.

Next, we establish \ref{g:isom}. If $x\in Y\setminus S$, let $r=\dist_{\widetilde d}(x,S)=\dist_{\widetilde d}(x,S\setminus \{x\})$. Let $z\in S$. By \ref{g:cases}, for each $\varepsilon>0$ there exists  $u\in S$ such that 
$$\widetilde d(x,z)\geq d_Y(x,u)+ d_S(u,z)-\varepsilon\geq \dist_{d_Y}(x,S) -\varepsilon.$$
This implies that $r\geq \dist_{d_Y}(x,S)>0$. Let $y\in Y$ with $\widetilde d(x,y)< r$. For $u,v\in S$ we have
$$d_Y(x,u)+d_S(u,v)+d_Y(v,y) \geq \widetilde d(x,u)\geq r >\widetilde d(x,y).$$
The definition of $\widetilde d$ implies that $\widetilde d(x,y)=d_Y(x,y)$. Now, if  $y,z\in B_{\widetilde d}(x,r/3)$, then $\dist_{\widetilde d}(y,S)\geq 2r/3 > \widetilde d(y,z)$. Hence, by what we have proved, $\widetilde d(y,z)=d_Y(y,z)$, which implies that $y,z\in B_{d_Y}(x,r/3)$. Also, $B_{d_Y}(x,r/3)\subset B_{\widetilde d}(x,r/3)$ trivially.  

If $x\in S$, let $r=\dist_{\widetilde d}(x,S\setminus \{x\})$. Note that $r$ might be zero, in which case we have nothing to prove. Suppose that $r>0$ and let $y\in Y$ with $0<\widetilde d(x,y)< r$. In particular, $y\notin S$. For $u\in S\setminus \{x\}$, by \ref{g:cases}, we have
\begin{align*}
d_S(x,u)+d_Y(u,y) \geq  d_S(x,u)=\widetilde d(x,u) \geq r> \widetilde d(x,y).
\end{align*}
The definition of $\widetilde d$ in the second part of \ref{g:cases} implies that $\widetilde d(x,y)=d_Y(x,y)$. If  $y,z\in B_{\widetilde d}(x,r/3)$, then for $u,v\in S\setminus \{x\}$, we have
\begin{align*}
d_Y(y,u)+d_S(u,v)+d_Y(v,z)\geq \widetilde d(y,u)+\widetilde d(v,z)\geq 2r/3> \widetilde d(y,z).
\end{align*}
If $u=v=x$, then the left-hand side is at least $d_Y(x,y)$ by the triangle inequality. Now, the definition of $\widetilde d$ implies that $\widetilde d(y,z)=d_Y(y,z)$ and $y,z\in B_{d_Y}(x,r/3)$. Also, $B_{d_Y}(x,r/3)\subset B_{\widetilde d}(x,r/3)$ trivially.  This completes the proof of \ref{g:isom}.

For \ref{g:homeo}, note that the identity map from $(Y,d_Y)$ to $(Y,\widetilde d)$ is trivially $1$-Lipschitz. We show the continuity of the inverse map. By \ref{g:isom} we see that the identity map is a local homeomorphism in $Y\setminus S$. Let $x\in S$ and $x_n\in Y$, $n\in \N$, be a sequence with $\widetilde d(x_n,x)\to 0$ as $n\to\infty$. By \ref{g:cases}, for each $n\in \N$ there exists $v_n\in S$ such that $d_S(x,v_n)+d_Y(v_n,x_n)\to 0$ as $n\to\infty$. Since $d_S$ is topologically equivalent to $d_Y$, we conclude that $d_Y(x,v_n)\to 0$. Since $d_Y(v_n,x_n)\to 0$, we conclude that $d_Y(x_n,x)\to 0$, as desired.  

Suppose that $(S,d_S)$ is a discrete metric space, as in \ref{g:local_isom}. Then $(S,d_Y)$ is also discrete since $d_S\leq d_Y$. As a consequence, by \ref{g:homeo}, the identity map is a homeomorphism from $(Y,d_Y)$ onto $(Y,\widetilde d)$. This implies that for each $x\in Y$ we have $r=\dist_{\widetilde d}(x,S\setminus \{x\})>0$, since $\dist_{d_Y}(x,S\setminus \{x\})>0$. By \ref{g:isom} we conclude that the identity map restricts to an isometric homeomorphism between $B_{d_Y}(x,r/3)$ and $B_{\widetilde d}(x,r/3)$.

Finally, suppose that $d_S\geq \lambda^{-1}d_Y$ as in \ref{g:lambda}. For $x,y\in Y$ and $u,v\in S$ we have
$$d_Y(x,u)+d_S(u,v)+d_Y(v,y)\geq \lambda^{-1}(d_Y(x,u)+d_Y(u,v)+d_Y(v,y))\geq \lambda^{-1}d_Y(x,y).$$
The definition of $\widetilde d$ gives immediately that $\widetilde d\geq \lambda^{-1}d_Y$.
\end{proof}

\subsection{Approximations of metric spaces}\label{section:approximations:definition}
Following \cite{BonkKleiner:quasisphere}*{Section 4} we define the notion of an approximation of a metric space as follows. Given a graph $G=(V,\sim)$ we denote by $k_G(u,v)$, or simply by $k(u,v)$ when the graph $G$ is implicitly understood, the combinatorial distance of vertices $u,v\in V$; that is, the minimum number of edges in a chain connecting the two vertices. Note that $k(u,v)$ is understood to be $\infty$ if there is no chain of edges connecting $u$ and $v$. Let $(X,d_X)$ be a metric space. We consider quadruples $\mathcal A=(G,\mathfrak{p},\mathfrak{r}, \mathcal{U})$, where $G=(V,\sim)$ is a graph with vertex set $V$, $\mathfrak p$ is a map $\mathfrak{p}\colon V\to X$, $\mathfrak r$ is a map $\mathfrak{r}\colon V\to (0,\infty)$, and $\mathcal U=\{\mathcal U(v): v\in V\}$ is an open cover of $X$. We let $\mathfrak{p}(v)=p_v$, $\mathfrak{r}(v)=r_v$, and $\mathcal U(v)=U_v$ for $v\in V$. For $K>0$ we define the \textit{$K$-star} of a vertex $v\in V$ with respect to $\mathcal A$ as
$$\mathcal A\text{-}\st_K(v)=\bigcup \{ U_u: u\in V,\, k(u,v)<K\}.$$
We will drop the letter $\mathcal A$ and denote the $K$-star by $\st_K(v)$, whenever this does not lead to a confusion. For $K,L\geq 1$, we call the quadruple $\mathcal A$ a \textit{$(K,L)$-approximation of $X$} if the following four conditions are satisfied. 
\begin{enumerate}[label=\normalfont{(A\arabic*)}]
	\item\label{a:1} Every vertex of $G$ has valence at most $K$.
	\item\label{a:2} $B(p_v,r_v)\subset U_v\subset B(p_v,Lr_v)$ for every $v\in V$.
	\item\label{a:3} Let $u,v\in V$. If $u\sim v$, then $U_u\cap U_v\neq \emptyset$ and $L^{-1}r_u\leq r_v\leq Lr_u$. Conversely, if $U_u\cap U_v\neq \emptyset$, then $k(u,v)<K$.
	\item\label{a:4} $N(U_v,{r_v/L})\subset \st_K(v)$ for every $v\in V$.
\end{enumerate}
Recall that $N(A,r)$ denotes the open $r$-neighborhood of the set $A$. The $(K,L)$-approximation $\mathcal A$ of $X$ is called \textit{fine} if $U_v\neq X$ for all $v\in V$.  

Observe that in the above definition the constant $K$ encodes combinatorial information, which therefore remains invariant under homeomorphisms, and the constant $L$ encodes metric information. Note that if $\mathcal A$ is a $(K,L)$-approximation of $X$ and $K'\geq K$, $L'\geq L$, then $\mathcal A$ is also an $(K',L')$-approximation of $X$. We record some immediate further properties of $(K,L)$-approximations.

\begin{enumerate}[label=\normalfont{(A\arabic*)}]\setcounter{enumi}{4}
	\item\label{a:5con} If $X$ is connected, then $G$ is connected.
	\begin{proof}
	This follows from \ref{a:3}.
	\end{proof}
	\item\label{a:6} Let $u,v\in V$. If $k(u,v)\geq 2K$ then $U_u\cap N(U_v,{r_v/L})=\emptyset$.
	\begin{proof}
	Suppose that $U_u\cap  N(U_v,{r_v/L})\neq \emptyset$, so $U_u\cap  \st_K(v)\neq \emptyset$ by \ref{a:4}. Hence, there exists $w\in V$ such that $U_u\cap  U_w\neq \emptyset$ and $k(w,v)<K$. By \ref{a:3}, we have $k(u,v)\leq k(u,w)+k(w,v)<2K$, a contradiction. 
	\end{proof}
	\item\label{a:7} Let $u,v\in V$. If $k(u,v)\geq 2K$ then for each $x\in U_u$ and $y\in U_v$ we have $$C(L)^{-1}d_X(p_u,p_v)\leq  d_X(x,y)\leq C(L)d_X(p_u,p_v).$$ 
	\begin{proof}
	By \ref{a:6}, $d_X(x,y)\geq r_{u}/L$ and $d_X(x,y)\geq r_{v}/L$.  Therefore, by \ref{a:2},
$$d_X(p_u,p_v)\leq Lr_{u}+ d_X(x,y)+Lr_{v} \leq  (2L^2+1) d_X(x,y).$$
On the other hand, we have $U_{u}\cap U_{v}=\emptyset$ by \ref{a:3}. By \ref{a:2} we conclude that $d_X(p_u,p_v)\geq \max\{r_{u},r_{v}\}$. Therefore,
\[d_X(x,y)\leq Lr_{u}+ d_X(p_u,p_v)+Lr_{v} \leq (2L+1)d_X(p_u,p_v).\qedhere\]
	\end{proof}
\end{enumerate}

\subsection{Approximations and connectedness assumptions}
We say that a metric space $X$ has \textit{bounded turning} if there exists a constant $L\geq 1$ such that for each pair $x,y\in X$ there exists a connected set $E\subset X$ that contains $x$ and $y$ with $\diam(E)\leq Ld_X(x,y)$. In this case we say that $X$ has $L$-bounded turning. 

\begin{lemma}[Connected sets and chains of vertices]\label{lemma:approximation_connected}
Let $X$ be a metric space, $K,L\geq 1$, and $\mathcal A=((V,\sim),\mathfrak{p},\mathfrak{r},\mathcal U)$ be a $(K,L)$-approximation of $X$. Let $u,v\in V$ and $E\subset X$ be a connected set with $p_u,p_v\in E$.  Then there exist vertices $u=w_0, w_1, \dots, w_n=v$, $n\geq 0$, with $k(w_{i-1},w_i)<K$ for each $i\in \{1,\dots,n\}$, such that 
$$\diam (\{p_{w_0},\dots,p_{w_n} \})\leq C(L) \diam(E).$$
In particular, if $X$ has $L$-bounded turning, then for every pair of points $p_u,p_v\in \mathfrak{p}(V)$ there exist points $p_{u}=p_{w_0},p_{w_1},\dots,p_{w_n}=p_{v}\in \mathfrak{p}(V)$ with $k(w_{i-1},w_i)<K$ for each $i\in \{1,\dots,n\}$, such that
$$\diam (\{p_{w_0},\dots,p_{w_n} \})\leq C'(L) d_X(p_{u},p_v).$$
\end{lemma}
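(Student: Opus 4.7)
The plan is to build a chain from $u$ to $v$ through vertices of $V$ whose cover sets meet $E$, and then to control the diameter of the resulting images. Set $\delta := \diam(E)$. I first reduce to the case $U_u \cap U_v = \emptyset$: otherwise \ref{a:3} yields $k(u,v) < K$, and the chain $w_0 = u$, $w_1 = v$ already works. In the reduced setting, \ref{a:2} forces $d(p_u, p_v) \geq \max(r_u, r_v)$, so $r_u, r_v \leq \delta$.

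To construct the chain I would use a standard clopen argument. Let $W := \{w \in V : U_w \cap E \neq \emptyset\}$; by \ref{a:2} we have $u, v \in W$. Let $W_0 \subset W$ be the set of vertices joinable to $u$ through a sequence $u = x_0, x_1, \ldots, x_m$ in $W$ with $U_{x_{j-1}} \cap U_{x_j} \neq \emptyset$ for each $j$. Set $W_1 := W \setminus W_0$ and $F_j := \bigcup_{w \in W_j} U_w$ for $j \in \{0, 1\}$. The maximality of $W_0$ gives $F_0 \cap F_1 = \emptyset$, since an intersection point would yield $w \in W_0$ and $w' \in W_1$ with $U_w \cap U_{w'} \neq \emptyset$, which extends the chain to $w'$ and contradicts $w' \notin W_0$. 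Both $F_0, F_1$ are open, $F_0 \cup F_1 \supset E$, and $p_u \in E \cap F_0$, so the connectedness of $E$ forces $E \cap F_1 = \emptyset$; this in turn forces $W_1 = \emptyset$, since each $w \in W_1$ has $U_w \cap E \neq \emptyset$. Consequently $v \in W_0$ and we obtain the desired chain $u = w_0, w_1, \ldots, w_n = v$ with $U_{w_{i-1}} \cap U_{w_i} \neq \emptyset$; \ref{a:3} then yields $k(w_{i-1}, w_i) < K$.

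The bound $\diam\{p_{w_0}, \ldots, p_{w_n}\} \leq C(L) \delta$ is the main technical point. Choosing $q_i \in U_{w_i} \cap E$ with $q_0 = p_u$ and $q_n = p_v$, axiom \ref{a:2} gives $d(p_{w_i}, q_i) < L r_{w_i}$ and $d(q_i, p_u) \leq \delta$, so $d(p_{w_i}, p_u) \leq L r_{w_i} + \delta$. The bound therefore reduces to ensuring $r_{w_i}$ is comparable to $\delta$ for every $i$, which the naive chain above need not satisfy: a vertex $w_i$ with $U_{w_i} \cap E \neq \emptyset$ but $r_{w_i} \gg \delta$ has its center $p_{w_i}$ potentially far from $E$. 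I would address this by refining the chain, for instance by selecting one that minimizes $\max_i r_{w_i}$ subject to the constraints, and arguing via \ref{a:4} that any oversized vertex in such a minimal chain can be bypassed by a combinatorially short subchain of smaller-radius vertices whose cover sets still meet $E$. I expect the rigorous justification of this refinement step, and the extraction of a quantitative dependence involving only $L$, to be the principal obstacle.

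Finally, the ``in particular'' statement is immediate: $L$-bounded turning provides, for any $p_u, p_v \in \mathfrak{p}(V)$, a connected set $E \ni p_u, p_v$ with $\diam(E) \leq L \cdot d_X(p_u, p_v)$, and applying the first part to this $E$ yields the claim with constant $C'(L) = L \cdot C(L)$.
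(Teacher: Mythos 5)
Your reduction and your construction of the chain are correct and coincide with the paper's: the open cover $\{U_w : U_w\cap E\neq\emptyset\}$ of the connected set $E$ produces vertices $u=w_0,\dots,w_n=v$ with $U_{w_{i-1}}\cap U_{w_i}\neq\emptyset$, hence $k(w_{i-1},w_i)<K$ by \ref{a:3}. You also correctly isolate the crux, namely an upper bound $r_{w_i}\leq C(L)\diam(E)$ for the \emph{intermediate} vertices of the chain. But you do not prove it: the refinement you sketch (choosing a chain minimizing $\max_i r_{w_i}$ and ``bypassing'' oversized vertices via short subchains) is left entirely unjustified, and it is also not the right tool. This is a genuine gap, and it sits precisely at the one step of the lemma that requires an idea beyond the covering argument.

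The missing estimate follows directly from \ref{a:4} applied to the unmodified chain; no refinement is needed and no oversized vertex can actually occur. Fix $i$ and suppose first that $E\not\subset\st_K(w_i)$. Since $N(U_{w_i},r_{w_i}/L)\subset\st_K(w_i)$ by \ref{a:4}, the connected set $E$ meets $U_{w_i}$ and also contains a point outside $N(U_{w_i},r_{w_i}/L)$, whence $\diam(E)\geq r_{w_i}/L$ --- exactly the bound you were missing. The only remaining case is that $E\subset\st_K(w_i)$ for some $i$; then every $U_{w_j}$ meets $\st_K(w_i)$, so $k(w_j,w_i)<2K$ by \ref{a:3} and all the radii $r_{w_j}$ are comparable to $r_{w_i}$, hence to $r_{u}$, which is at most $\diam(E)$ once one has discarded the trivial case $E\subset U_u$ (in that degenerate case the comparability constant in fact also involves $K$, through the length of the combinatorial path). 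Combining the two cases gives $\diam(U_{w_i})\leq 2Lr_{w_i}\lesssim\diam(E)$ for all $i$, and the diameter bound for $\{p_{w_0},\dots,p_{w_n}\}$ follows by the triangle inequality, as you indicate. Your treatment of the ``in particular'' statement is fine.
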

\begin{proof}
By \ref{a:2} we have $p_u\in U_u$ and $p_v\in U_v$. If $E\subset U_u$, then $U_v\cap U_u\neq \emptyset$ so $k(u,v)<K$ by \ref{a:3}. We have $\diam(\{p_u,p_v\})\leq \diam(E)$, so the conclusion follows in this case. Suppose that $E\not\subset U_u$. By \ref{a:2}, $\diam(E)\geq r_u$. The collection $\{U_w: w\in V,\, U_w\cap E\neq \emptyset\}$ is an open cover of $E$. The connectedness of $E$ implies that there exist vertices $u=w_0,w_1,\dots,w_n=v$, where $n\geq 0$, such that $U_{w_i}\cap E\neq \emptyset$ for $i\in \{0,\dots,n\}$ and  $U_{w_{i-1}}\cap U_{w_i}\neq \emptyset$ for $i\in \{1,\dots,n\}$. By \ref{a:3}, we have $k(w_{i-1},w_i)<K$ for $i\in \{1,\dots,n\}$. 

If $E\subset \st_K(w_i)$ for some $i\in \{0,\dots,n\}$ then $U_{w_j}\cap \st_K(w_i)\neq \emptyset$ for all $j\in \{0,\dots,n\}$. Property \ref{a:3} implies that $k(w_i,w_j)<2K$ and $C(L)^{-1}r_{w_i}\leq r_{w_j}\leq C(L)r_{w_i}$ for all $j\in \{0,\dots,n\}$. In particular, $r_{w_j}\leq C_1(L) r_{u}\leq C_1(L)\diam (E)$ for all $j\in \{0,\dots,n\}$. By \ref{a:2}, we have $\diam(U_{w_j}) \leq 2Lr_{w_j}\leq C_2(L)\diam(E)$ for $j\in \{0,\dots,n\}$.

Next, suppose that $E\not\subset \st_K(w_i)$ for all $i\in \{0,\dots,n\}$. By \ref{a:4}, since $E\cap U_{w_i}\neq \emptyset$, we have  $\diam(E)\geq r_{w_i}/L$. We conclude that $\diam(U_{w_i})\leq 2Lr_{w_i} \leq 2L^2 \diam(E)$ for all $i\in \{0,\dots,n\}$.	

Combining both cases, we arrive at the conclusion $\diam(U_{w_i})\leq C_3(L) \diam(E)$ for all $i\in \{0,\dots,n\}$. If $F=\{p_{w_0},\dots,p_{w_n}\}$, then there exist $i,j\in \{0,\dots,n\}$ such that
$$\diam(F)\leq \diam(U_{w_i})+\diam(U_{w_j}) +\diam(E)\leq C_4(L)\diam(E).$$
This completes the proof.
\end{proof}

We say that a metric space $X$ is \textit{quasiconvex} if there exists $L\geq 1$ such that for every pair of points $x,y\in X$ there exists a curve $\gamma$ connecting $x$ and $y$ with $\ell(\gamma)\leq Ld_X(x,y)$. In this case we say that $X$ is $L$-quasiconvex.

\begin{lemma}\label{lemma:quasiconvex}
Let $X$ be a metric space, $K,L\geq 1$, and $\mathcal A=((V,\sim),\mathfrak{p},\mathfrak{r},\mathcal U)$ be a $(K,L)$-approximation of $X$. Suppose that $X$ is $L$-quasiconvex. Then for every pair of vertices $u,v\in V$ with $k(u,v)\geq K$ there exist vertices $u=w_0,w_1,\dots,w_n=v$, $n\geq 0$, with $w_{i-1}\sim w_i$ for each $i\in \{1,\dots,n\}$, such that
\begin{align}\label{lemma:quasiconvex:main}
\sum_{i=0}^n r_{w_i} \leq C(K,L) d_X(p_u,p_v).
\end{align}
\end{lemma}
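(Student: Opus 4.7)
My plan is to exploit the quasiconvex curve between $p_u$ and $p_v$ and perform a greedy construction along it, then convert the result to an edge-adjacent chain. Fix an arc-length parametrized curve $\gamma\colon[0,\ell]\to X$ with $\gamma(0)=p_u$, $\gamma(\ell)=p_v$, and $\ell\leq L\,d_X(p_u,p_v)$. The main tool is property \ref{a:4}, which guarantees $N(U_{w},r_w/L)\subset \st_K(w)$ for every vertex $w$; this is what will convert arc length along $\gamma$ into a bound on the sum of radii.

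I first construct a ``loose'' chain $u=w_0,w_1,\dots,w_N$ in which consecutive vertices satisfy $k(w_{i-1},w_i)<2K$. Setting $w_0=u$ and $s_0=0$, I proceed inductively: given $w_i$ with $\gamma(s_i)\in U_{w_i}$, I terminate with $N=i$ if $\gamma([s_i,\ell])\subset \st_K(w_i)$; otherwise I let $s_{i+1}=\inf\{s\in [s_i,\ell]:\gamma(s)\notin \st_K(w_i)\}$ and pick any $w_{i+1}\in V$ with $\gamma(s_{i+1})\in U_{w_{i+1}}$. Openness of $U_{w_{i+1}}$ and continuity of $\gamma$ produce a parameter $s$ slightly less than $s_{i+1}$ with $\gamma(s)\in U_{w_{i+1}}\cap \st_K(w_i)$, and \ref{a:3} then yields $k(w_i,w_{i+1})<2K$. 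Property \ref{a:4} combined with the arc-length parametrization gives the crucial inequality $s_{i+1}-s_i\geq r_{w_i}/L$, since $\gamma(s_{i+1})\notin N(U_{w_i},r_{w_i}/L)$ while $\gamma(s_i)\in U_{w_i}$. Summing these differences forces termination in finitely many steps and yields $\sum_{i=0}^{N-1}r_{w_i}\leq L\ell \leq L^2\,d_X(p_u,p_v)$. At termination, $p_v\in \st_K(w_N)$ implies via \ref{a:3} that $k(w_N,v)<2K$ and $r_{w_N}\leq L^{2K}r_v$, and the hypothesis $k(u,v)\geq K$ combined with \ref{a:2} and \ref{a:3} gives $r_u,r_v\leq d_X(p_u,p_v)$. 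Appending $v$, I obtain a chain $u=w_0,\dots,w_{N+1}=v$ with consecutive combinatorial distances $<2K$ and total radius sum at most $C(K,L)\,d_X(p_u,p_v)$.

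The final step converts this loose chain into an edge-adjacent one: between each consecutive pair $(w_i,w_{i+1})$, I insert fewer than $2K$ vertices forming an edge-adjacent path, and iterating the estimate $L^{-1}r_w\leq r_{w'}\leq Lr_w$ from \ref{a:3} across these insertions shows every inserted radius is at most $L^{2K}$ times $r_{w_i}$, so the total radius sum grows by at most a factor $2KL^{2K}$. I expect the main obstacle to be the two-scale structure of the argument: the arc-length inequality $s_{i+1}-s_i\geq r_{w_i}/L$ naturally produces a chain of combinatorially close but not edge-adjacent vertices, while the statement demands edge-adjacency, so reconciling the two via insertion forces an exponential (in $K$) dependence in the final constant $C(K,L)$.
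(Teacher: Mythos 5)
Your argument is correct in substance but reaches \eqref{lemma:quasiconvex:main} by a genuinely different mechanism than the paper. The paper first extracts a chain $u=w_0,\dots,w_n=v$ from the open cover of $|\gamma|$ by the sets $U_w$ meeting it (after disposing separately of the case $|\gamma|\subset N(U_w,r_w/L)$), then proves the pointwise bounded-overlap estimate $\sum_{w\in V}\chi_{\st_K(w)}(x)\leq C(K)$ and integrates the inequalities $\int_\gamma \chi_{\st_K(w_i)}\,ds\geq L^{-1}r_{w_i}$ along $\gamma$. Your greedy exit-time construction makes the parameter intervals $[s_i,s_{i+1}]$ pairwise disjoint by fiat, so the estimate $s_{i+1}-s_i\geq r_{w_i}/L$ (the same use of \ref{a:4}) telescopes directly to $\sum_i r_{w_i}\leq L\,\ell(\gamma)$ with no multiplicity lemma at all. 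The price is that consecutive vertices only satisfy $k(w_i,w_{i+1})<2K$ rather than $<K$, which is immaterial for the final edge-insertion step; both proofs end with the same conversion to an edge-adjacent chain via \ref{a:3} and the same exponential-in-$K$ dependence in $C(K,L)$. Your handling of the endpoints ($r_u,r_v\leq d_X(p_u,p_v)$ from $U_u\cap U_v=\emptyset$, and $k(w_N,v)<2K$ from $p_v\in \st_K(w_N)$) matches the paper's use of the hypothesis $k(u,v)\geq K$.

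One step needs repair: the claim that ``summing these differences forces termination in finitely many steps.'' The bound $\sum_i (s_{i+1}-s_i)\leq \ell$ is perfectly consistent with an infinite process in which $r_{w_i}\to 0$, so summation alone does not terminate the induction. The gap is easily filled. If the process were infinite, set $s_\infty=\lim_i s_i\leq \ell$ and choose $w\in V$ with $\gamma(s_\infty)\in U_w$, which is possible since $\mathcal U$ covers $X$. By continuity and openness of $U_w$, we have $\gamma(s_i)\in U_w$ for all large $i$, so $U_{w_i}\cap U_w\neq\emptyset$, hence $k(w_i,w)<K$ by \ref{a:3} and therefore $r_{w_i}\geq L^{-K}r_w>0$ for all large $i$, contradicting $r_{w_i}\leq L(s_{i+1}-s_i)\to 0$. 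With this addition your proof is complete.
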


\begin{proof}
Let $u,v\in V$ be distinct vertices. Let $\gamma$ be a curve connecting $p_u$ and $p_v$ with $\ell(\gamma)\leq L d_X(p_u,p_v)$. Note that $U_u\cap U_v=\emptyset $ by \ref{a:3}, so by \ref{a:2} we have 
\begin{align}\label{lemma:quasiconvex:lower}
r_u+r_v\leq \ell(\gamma)\leq Ld_X(p_u,p_v).
\end{align}
Suppose, first, that $|\gamma|$ is contained in $N(U_w,r_w/L)$ for some $w\in V$. Then $U_u\cap N(U_w,r_w/L)\neq \emptyset$ so by \ref{a:6} we conclude that $k(u,w)<2K$ and similarly $k(v,w)<2K$, so $k(u,v)<4K$. We consider a chain $u=w_0,\dots,w_n=v$ of length $n=k(u,v)$ with $w_{i-1}\sim w_i$ for $i\in \{1,\dots,n\}$. By \ref{a:3} we have $r_{w_i}\leq C(K,L) r_u$ for each $i\in \{0,\dots,n\}$. Thus, in combination with \eqref{lemma:quasiconvex:lower} we have
$$\sum_{i=0}^n r_{w_i} \leq (n+1)C(K,L)r_u \leq (4K+1)C(K,L) \ell(\gamma).$$
This completes the proof in that case.

Next, suppose that $|\gamma|$ is not contained in $N(U_w,r_w/L)$ for any $w\in V$. The collection $\{U_w: w\in V,\, U_w\cap |\gamma|\neq \emptyset\}$ is an open cover of $|\gamma|$. The connectedness of $|\gamma|$ implies that there exist vertices $u=w_0,w_1,\dots,w_n=v$, where $n\geq 0$, such that $U_{w_i}\cap |\gamma|\neq \emptyset$ for $i\in \{0,\dots,n\}$ and  $U_{w_{i-1}}\cap U_{w_i}\neq \emptyset$ for $i\in \{1,\dots,n\}$. By \ref{a:3}, we have $k(w_{i-1},w_i)<K$ for $i\in \{1,\dots,n\}$. Since $|\gamma|$ is not contained in $N(U_{w_i},r_{w_i}/L)$, by \ref{a:4} we have
\begin{align}\label{lemma:quasiconvex:integral}
\int_\gamma \chi_{\st_K(w_i)} \, ds \geq  \int_{\gamma} \chi_{N(U_{w_i},r_{w_i}/L)}\, ds \geq L^{-1}r_{w_i}
\end{align}
for each $i\in \{0,\dots,n\}$.

We claim that for each $x\in X$ we have
\begin{align}\label{lemma:quasiconvex:multiplicity}
\sum_{w\in V} \chi_{\st_K(w)}(x) \leq C(K).
\end{align}
We fix $w\in V$. Suppose that $x\in \st_K(w)\cap \st_K(w')$ for some $w'\in V$, $x\in X$. Then there exist $s,s'\in V$ such that $k(w,s)<K$, $x\in U_s$, $k(w',s')<K$, and $x\in U_{s'}$. Since $U_s\cap U_{s'}\neq \emptyset$, by \ref{a:3} we have $k(s,s')<K$, so $k(w,w')<3K$. By \ref{a:1} we conclude that there are at most $C(K)$ vertices $w'$ such that $\st_K(w')$ intersects $\st_K(w)$. This proves the claim. We now integrate both sides of \eqref{lemma:quasiconvex:multiplicity} and by \eqref{lemma:quasiconvex:integral} we obtain
\begin{align*}
C(K)\ell(\gamma) \geq \sum_{i=0}^n \int_{\gamma} \chi_{\st_K(w_i)}\, ds \geq L^{-1} \sum_{i=0}^n r_{w_i}.
\end{align*}

It only remains to satisfy the requirement that $w_{i-1}\sim w_i$. This can be easily achieved by adding, for each $i\in \{1,\dots,n\}$, at most $K$ vertices in a combinatorial path from $w_{i-1}$ to $w_i$. For each added vertex $w$ the value of $r_w$ is comparable to $r_{w_i}$, thanks to \ref{a:3}. This completes the proof.
\end{proof}

\subsection{Images of approximations of metric spaces}
In the statements below, such as in Lemma \ref{lemma:approximation_qs_local}, if $Y$ is a metric space, then the metric of $Y$ is implicitly understood to be $d_Y$, whenever this is not explicitly stated. In some cases, such as in Lemma \ref{lemma:approximation_change}, where we consider different metrics on the same underlying set $Y$, we will explicitly state the metric that we are using each time.

\begin{definition}[Image of approximation under a homeomorphism]\label{definition:image_approximation}
Let $X,Y$ be connected metric spaces, $f\colon X\to Y$ be a homeomorphism, and $K,L\geq 1$. Suppose that $\mathcal A=((V,\sim),\mathfrak{p},\mathfrak{r}, \mathcal U)$ is a fine $(K,L)$-approximation of $X$. Define $\mathcal A'= ((V,\sim), \mathfrak{p}',\mathfrak{r}', \mathcal U')$, where for $v\in V$ we set $\mathfrak p'(v)={p}'_v= f(p_v)$, $\mathcal U'(v)=U_v'=f(U_v)$, and
$$r_v'=\inf\{d_Y(f(x),p_v'): x\in X, d_X(x,p_v)\geq r_v\}.$$
We say that $\mathcal A'$ is the \textit{image of the approximation $\mathcal A$ of $X$ under $f$} and we write $\mathcal A'=f(\mathcal A)$. 
\end{definition}
Since $\mathcal A$ is a fine approximation of $X$, we have  $U_v\neq X$ for each $v\in V$; this and the continuity of $f^{-1}$ imply that $r_v'\in (0,\infty)$ for each $v\in V$. If $f$ is $\eta$-quasisymmetric, then by \cite{BonkKleiner:quasisphere}*{Lemma 4.1}, $\mathcal A'$ is a $(K,L')$-approximation of $Y$ for some $L'$ depending only on $K,L$ and $\eta$. In general, without any assumptions on $f$ or $Y$,  $\mathcal A'$ is not an approximation of $Y$. In the next lemma we provide another sufficient condition for $\mathcal A'$ to be an approximation of $Y$.

\begin{lemma}[Image of approximation under a quasisymmetry]\label{lemma:approximation_qs_local}
Let $X$, $Y$ be connected metric spaces, $f\colon X\to Y$ be a homeomorphism, $\eta\colon [0,\infty)\to [0,\infty)$ be a homeomorphism, and $K,L\geq 1$. Suppose that $\mathcal A=((V,\sim),\mathfrak{p},\mathfrak{r}, \mathcal U)$ is a fine $(K,L)$-approximation of $X$ and the following conditions hold.
\begin{enumerate}[label=\normalfont{(\arabic*)}]
	\item $Y$ has $L$-bounded turning.
	\item  For each $v\in V$, $f|_{\st_{K}(v)}$ is $\eta$-quasisymmetric.
\end{enumerate}
Then $\mathcal A'=f(\mathcal A)$ is a fine $(K,L')$-approximation of $(Y,d_Y)$, where $L'=C(K,L,\eta)$. 
\end{lemma}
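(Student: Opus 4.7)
My plan is to verify each of the axioms \ref{a:1}--\ref{a:4} together with the fineness condition for $\mathcal A'$. The easy observations go first: since the underlying graph of $\mathcal A'$ equals that of $\mathcal A$, axiom \ref{a:1} and the combinatorial part of \ref{a:3} (both $U_u'\cap U_v'\neq\emptyset$ for $u\sim v$, and its converse) are inherited from $\mathcal A$ via bijectivity of $f$; fineness $U_v'\neq Y$ follows from $U_v\neq X$; and $r_v'>0$ by continuity of $f^{-1}$ (otherwise a sequence $z_n$ with $d_X(z_n,p_v)\geq r_v$ and $f(z_n)\to p_v'$ would give $z_n\to p_v$). The first inclusion $B(p_v',r_v')\subset U_v'$ in \ref{a:2} is immediate from the definition of $r_v'$. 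The three remaining verifications use the same toolkit: the $\eta$-quasisymmetry of $f|_{\st_K(v)}$, the $L$-bounded turning of $Y$, and the intermediate value theorem applied to continuous functions such as $d_X(\cdot,p_v)$ or $d_X(\cdot,U_v)$ on connected subsets of $X$.

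\textbf{Sphere reference points.} The central construction is of a \emph{sphere reference point} $x_0\in\st_K(v)$ with $d_X(x_0,p_v)=r_v$ and $d_Y(f(x_0),p_v')\in[r_v',2Lr_v']$: by definition of $r_v'$ pick $z$ with $d_X(z,p_v)\geq r_v$ and $d_Y(f(z),p_v')<2r_v'$; by the $L$-bounded turning of $Y$ there is a connected set $E\subset Y$ joining $p_v'$ to $f(z)$ with $\diam(E)\leq 2Lr_v'$; then $f^{-1}(E)$ is connected in $X$ and joins $p_v$ to $z$, and the IVT applied to $d_X(\cdot,p_v)$ produces $x_0\in f^{-1}(E)$ with $d_X(x_0,p_v)=r_v$. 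Axiom \ref{a:4} and $\overline{B(p_v,r_v)}\subset\overline{U_v}\subset N(U_v,r_v/L)$ place $x_0$ in $\st_K(v)$. The second inclusion of \ref{a:2} now follows by applying the quasisymmetry of $f|_{\st_K(v)}$ to the triple $(p_v,x,x_0)$ for $x\in U_v$, yielding $d_Y(f(x),p_v')\leq\eta(L)\cdot 2Lr_v'$. The ratio in \ref{a:3}, $L'^{-1}r_u'\leq r_v'\leq L'r_u'$ for $u\sim v$, follows by applying the analogous construction to $u$ and $v$ together with quasisymmetry on a common star $\st_K(u)$ or $\st_K(v)$ (possible since $U_u,U_v$ sit inside both stars for $K\geq 2$); when necessary, the sphere reference points can be taken strictly inside $U_v$ or $U_u$ at a slightly smaller radius to guarantee joint membership, and the relation with $r_v'$ is then recovered via another application of the quasisymmetric estimate.

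\textbf{Axiom \ref{a:4}.} This is the most delicate verification. Suppose for contradiction that $y\in N(U_v',r_v'/L')$ yet $x:=f^{-1}(y)\notin\st_K(v)$. Pick $y'\in U_v'$ with $d_Y(y,y')<r_v'/L'$ and let $x':=f^{-1}(y')\in U_v$. By \ref{a:4} for $\mathcal A$ we have $d_X(x,U_v)\geq r_v/L$, so $d_X(x,x')\geq r_v/L$. Bounded turning of $Y$ produces a connected $E\ni y,y'$ with $\diam(E)<Lr_v'/L'$, so $f^{-1}(E)$ is connected in $X$, contains $x$ and $x'$, and the continuous function $d_X(\cdot,U_v)$ on $f^{-1}(E)$ takes value $0$ at $x'$ and value $\geq r_v/L$ at $x$. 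The IVT yields $x_3\in f^{-1}(E)$ with $d_X(x_3,U_v)=r_v/(2L)$; hence $x_3\in N(U_v,r_v/L)\subset\st_K(v)$, $d_X(x_3,p_v)\geq r_v$, and $d_X(x_3,p_v)\leq 2Lr_v$. Applying the quasisymmetry of $f|_{\st_K(v)}$ to the triple $(x_3,p_v,x')$ and using $d_X(x_3,x')\geq r_v/(2L)$ gives
\[\frac{d_Y(f(x_3),p_v')}{d_Y(f(x_3),y')}\leq\eta\!\left(\frac{d_X(x_3,p_v)}{d_X(x_3,x')}\right)\leq\eta(4L^2).\]
Combined with $d_Y(f(x_3),p_v')\geq r_v'$ (from the definition of $r_v'$, since $d_X(x_3,p_v)\geq r_v$) and $d_Y(f(x_3),y')\leq\diam(E)<Lr_v'/L'$, this forces $L'\leq L\eta(4L^2)$, contradicting the choice $L'>L\eta(4L^2)$.

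\textbf{Main obstacle.} The main difficulty is that quasisymmetry of $f$ is available only on each star $\st_K(v)$, not globally, and the bounded-turning hypothesis on $Y$ is what permits bridging this gap: a hypothetical short $Y$-distance from $y$ to $U_v'$ is converted by bounded turning into a connected short-diameter set in $Y$ whose preimage in $X$ must pass through the buffer region $N(U_v,r_v/L)\setminus U_v\subset\st_K(v)$, on which the quasisymmetric estimate applies. The quantitative heart of \ref{a:4} is to exhibit an intermediate point $x_3$ whose distance from $U_v$ is controlled from both above and below, so that the ratio inside $\eta$ is bounded by a constant depending only on $K,L,\eta$; the analogous care is required in \ref{a:3}, where sphere reference points must be chosen to lie inside a common star of $p_u$ and $p_v$.
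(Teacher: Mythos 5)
Your proposal is correct and follows essentially the same strategy as the paper's proof: the key mechanism in both is that $L$-bounded turning of $Y$ produces a connected set whose preimage must, by connectedness and \ref{a:4} for $\mathcal A$, pass through the buffer $N(U_v,r_v/L)\subset\st_K(v)$, where the local quasisymmetry can be applied; the paper packages this as a single claim bounding $d_Y(f(x),y')$ from below by $\diam(U_v')\simeq r_v'$ for $x\notin\st_K(v)$, $y'\in U_v'$, and deduces \ref{a:2}--\ref{a:4} from it using the diameter-distortion Lemma \ref{lemma:qs_heinonen}, while you work with explicit intermediate points ($x_0$, $x_3$) and the three-point definition. The only point needing slightly more care is placing your reference point $x_0$ with $d_X(x_0,p_v)=r_v$ inside $\overline{B(p_v,r_v)}$ (a point at distance exactly $r_v$ need not lie in the closure of the open ball in a general metric space), but this is repaired either by choosing $x_0$ in the boundary of the sublevel set $\{d_X(\cdot,p_v)<r_v\}$ within the connected set, or, as you note, by taking the reference point at a slightly smaller radius.
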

The assumption that $Y$ has bounded turning compensates for the fact that $f$ is not quasisymmetric on all of $X$.

\begin{proof}
We use the notation $x'=f(x)$ for $x\in X$ and $x=f^{-1}(x')$ for $x'\in Y$. Note that \ref{a:1} is automatically true for $\mathcal A'$.  We first show that for each $v\in V$,
\begin{align}\label{lemma:approximation_qs_local_outside_star}
\text{if $x\notin \st_K(v)$ and $y\in U_v$, then $d_Y(x',y')\gtrsim_{K,L,\eta} \diam(U_v')$.}
\end{align}
Let $x,y$ as above. Since $Y$ has $L$-bounded turning, there exists a connected set $E'$ connecting $x'$ to $y'$ such that $\diam(E')\leq L d_Y(x',y')$. Let $E=f^{-1}(E')$. By \ref{a:4},  $N(U_v,{r_v/L})\subset \st_K(v)$, so the connectedness of $E$ implies that  the set $F= E\cap \st_K(v)$ satisfies $\diam(F)\geq r_v/L$. In particular, by \ref{a:2} and \ref{a:3}, we have $\diam(F)\simeq_{K,L} \diam(\st_K(v))\simeq_{K,L} r_v$. Since $f$ is a quasisymmetry from $\st_K(v)$ onto $f(\st_K(v))$, we conclude that $\diam(f(F))\simeq_{K,L,\eta} \diam(f(\st_K(v)))$; see Lemma \ref{lemma:qs_heinonen}. Thus, $$d_Y(x',y')\gtrsim_L \diam(E') \gtrsim_L \diam(f(F)) \simeq_{K,L,\eta} \diam(f(\st_K(v))) \gtrsim_{K,L,\eta} \diam(U_v').$$
This completes the proof of the claim.

We now establish \ref{a:2} for $\mathcal A'$. Let $v\in V$. By definition, we have $B(p_v', r_v')\subset f(B(p_v,r_v))\subset f(U_v)=U_v'$, so $\diam(U_v')\geq r_v'$. Recall that $B(p_v,r_v)\neq X$ since $\mathcal A$ is a fine approximation. By the connectedness of $X$, there exists $x_v\in \br{B(p_v,r_v)}$ with $d_X(p_v,x_v)=r_v$. Suppose that $x\in X$ and $d_X(p_v,x)\geq r_v$, as in the definition of $r_v'$. By \ref{a:2} and \ref{a:3} we have $\diam(\st_K(v))\simeq_{K,L} r_v$. If $x\in \st_{K}(v)$, then the fact that $d_X(p_v,x)\gtrsim_{K,L} \diam(\st_K(v))$ and the assumption that $f|_{\st_{K}(v)}$ is quasisymmetric, with the aid of Lemma \ref{lemma:qs_heinonen}, yield 
$$d_Y(p_v',x') \gtrsim_{K,L,\eta} \diam(f(\st_K(v)))\gtrsim_{K,L,\eta} \diam(U_v').$$
If $x\notin \st_K(v)$, then by \eqref{lemma:approximation_qs_local_outside_star} we have $d_Y(p_v',x')\gtrsim_{K,L,\eta} \diam(U_v')$. If we combine both cases, the definition of $r_v'$ yields 
\begin{align}\label{lemma:approximation_qs_local_a3}
\diam(U_v')\simeq_{K,L,\eta}r_v'.
\end{align}
This proves \ref{a:2}.
  
Next, we show \ref{a:3} for $\mathcal A'$. If $u\sim v$, then $U_v\cap U_u\neq \emptyset$ and $r_u\simeq_L r_v$ by property \ref{a:3} for $\mathcal A$. Combined with \ref{a:2}, this gives $\diam(U_u)\simeq_L \diam(U_v)$. Note that $U_u\cup U_v\subset \st_K(v)$ by \ref{a:3}. Since $f$ quasisymmetric on $\st_K(v)$, by Lemma \ref{lemma:qs_heinonen} we conclude that $\diam(U_u')\simeq_{L,\eta} \diam(U_v')$. Combining this with \eqref{lemma:approximation_qs_local_a3}, we obtain $r_u'\simeq_{K,L,\eta} r_v'$. This completes the proof of the first part of \ref{a:3}. The rest of \ref{a:3} is combinatorial so it holds trivially.

Finally \ref{a:4} follows immediately from \eqref{lemma:approximation_qs_local_outside_star} and \eqref{lemma:approximation_qs_local_a3}.
\end{proof}

\begin{lemma}[Changing the metric on vertices]\label{lemma:approximation_change}
Let $(Y,d_Y)$ be a metric space, $K,L,M\geq 1$, and $\mathcal A=((V,\sim),\mathfrak{p},\mathfrak{r}, \mathcal U)$ be a fine $(K,L)$-approximation of $Y$. Let $S=\mathfrak{p}(V)$ and $d_S$ be a metric on $S$, and suppose that the following conditions hold.
\begin{enumerate}[label=\normalfont{(\arabic*)}]
	\item\label{lemma:approximation_bt:1} $(Y,d_Y)$ has $L$-bounded turning.
	\item\label{lemma:approximation_change:ineq} $d_S\leq d_Y$ on $S\times S$.
	\item\label{lemma:approximation_change:lower} For every pair of distinct vertices $u,v\in V$ we have $d_S(p_u,p_v)\geq r_v/L$.
	\item\label{lemma:approximation_bt:2} For every pair of vertices $u,v\in V$ there exist vertices $u=w_0, w_1, \dots, w_n=v$, $n\geq 0$, with $k(w_{i-1},w_i)<K$ for each $i\in \{1,\dots,n\}$, such that 
$$\diam_{d_S} (\{p_{w_0},\dots,p_{w_n} \})\leq L d_S(p_u,p_v).$$
\end{enumerate} 
Consider the metric $\widetilde d$ on $Y$ arising by gluing $d_Y$ with $d_S$, as in Lemma \ref{lemma:glue}. Then the following statements are true.
\begin{enumerate}[label=\normalfont{(\roman*)}]
	\item\label{lemma:approximation_change:bilip} Let $u,v\in V$ and $x\in U_u$, $y\in U_v$. Then 
	$$\min \{r_u+r_v, d_Y(x,y)\}\leq C(K,L)\widetilde d(x,y).$$
	In particular, if $k(u,v)<M$, then
	$$ \widetilde d(x,y)\leq d_Y(x,y)\leq C(K,L,M) \widetilde d(x,y).$$
	\item\label{lemma:approximation_change:bt} $(Y,\widetilde d)$ has $C(K,L)$-bounded turning. 
	\item\label{lemma:approximation_change_image} Let $\widetilde {\mathcal A}$ be the image of $\mathcal A$ under the identity map from $(Y,d_Y)$ onto $(Y,\widetilde d)$.  Then $\widetilde {\mathcal A}$ is a fine $(K,\widetilde L)$-approximation of $(Y,\widetilde d)$, where $\widetilde L=C(K,L)$.
\end{enumerate}
\end{lemma}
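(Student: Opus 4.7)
The plan is to prove parts \ref{lemma:approximation_change:bilip}, \ref{lemma:approximation_change:bt}, \ref{lemma:approximation_change_image} in order, with \ref{lemma:approximation_change_image} following as an immediate consequence of Lemma \ref{lemma:approximation_qs_local} once the first two parts are available.

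For \ref{lemma:approximation_change:bilip}, I unpack the explicit formula for $\widetilde d$ from Lemma \ref{lemma:glue}. Fixing a small $\varepsilon$, either $\widetilde d(x,y)=d_Y(x,y)$ (and the bound is immediate) or there exist $p_{u'},p_{v'}\in S$ with $d_Y(x,p_{u'})+d_S(p_{u'},p_{v'})+d_Y(p_{v'},y)<2\widetilde d(x,y)$. If $u'=v'$, the triangle inequality gives $d_Y(x,y)<2\widetilde d(x,y)$. Otherwise hypothesis \ref{lemma:approximation_change:lower} yields $d_S(p_{u'},p_{v'})\geq \max\{r_{u'},r_{v'}\}/L$, and I split according to whether $d_Y(x,p_{u'})<r_u/L$ and whether $d_Y(p_{v'},y)<r_v/L$. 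If $d_Y(x,p_{u'})<r_u/L$, then $p_{u'}\in N(U_u,r_u/L)\subset \st_K(u)$ by \ref{a:4}, which via \ref{a:3} forces $k(u,u')<2K$ and hence $r_{u'}\simeq_{K,L} r_u$; if $d_Y(x,p_{u'})\geq r_u/L$, then that summand alone already dominates $r_u/L$. Combining the $u$-side and $v$-side estimates in all four subcases yields $r_u+r_v\lesssim_{K,L}\widetilde d(x,y)$. The \emph{in particular} clause then follows because $k(u,v)<M$ together with \ref{a:2} and iterated \ref{a:3} gives $d_Y(x,y)\lesssim_{K,L,M} r_u+r_v$; combined with $\widetilde d\leq d_Y$ from Lemma \ref{lemma:glue} this produces the bi-Lipschitz equivalence.

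For \ref{lemma:approximation_change:bt}, given $x,y\in Y$ I construct a connected set $F\ni x,y$ with $\diam_{\widetilde d}(F)\lesssim_{K,L}\widetilde d(x,y)$. If $\widetilde d(x,y)=d_Y(x,y)$, the $L$-bounded turning of $d_Y$ suffices because $\widetilde d\leq d_Y$. Otherwise, I pick near-minimizing $p_{u'},p_{v'}$ as above, and invoke hypothesis \ref{lemma:approximation_bt:2} to produce a chain $u'=w_0,w_1,\dots,w_n=v'$ with $k(w_{i-1},w_i)<K$ and $\diam_{d_S}\{p_{w_0},\dots,p_{w_n}\}\leq L\,d_S(p_{u'},p_{v'})$. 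For consecutive $p_{w_{i-1}},p_{w_i}$, the approximation axioms give $d_Y(p_{w_{i-1}},p_{w_i})\lesssim_{K,L} r_{w_i}$, so bounded turning of $d_Y$ yields a connected $F_i$ of $d_Y$-diameter $\lesssim_{K,L} r_{w_i}$; crucially, \ref{lemma:approximation_change:lower} combined with the chain bound gives $r_{w_i}\lesssim_{K,L}\widetilde d(x,y)$. Bounded-turning sets $F_0',F_n'$ in $d_Y$ join $x$ to $p_{u'}$ and $p_{v'}$ to $y$ with $d_Y$-diameter $\lesssim_L\widetilde d(x,y)$. On the union $F=F_0'\cup F_1\cup\cdots\cup F_n\cup F_n'$, $\widetilde d$-distances within a single piece are bounded by its $d_Y$-diameter, while $\widetilde d$-distances between different pieces are bounded via the triangle inequality through the vertex points, using Lemma \ref{lemma:glue}\ref{g:cases} to rewrite $\widetilde d(p_{w_i},p_{w_j})=d_S(p_{w_i},p_{w_j})$ and controlling the latter by the chain $d_S$-diameter.

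For \ref{lemma:approximation_change_image}, I apply Lemma \ref{lemma:approximation_qs_local} with source $(Y,d_Y)$, target $(Y,\widetilde d)$, and $f$ the identity map. Hypothesis (1) of that lemma is exactly part \ref{lemma:approximation_change:bt}. For hypothesis (2), any two points in $\st_K(v)$ lie in some $U_u,U_w$ with $k(u,w)<2K$, so the \emph{in particular} statement of \ref{lemma:approximation_change:bilip} (with $M=2K$) shows the identity restricted to $\st_K(v)$ is $C(K,L)$-bi-Lipschitz, hence $\eta$-quasisymmetric with $\eta$ depending only on $K,L$. The principal obstacle throughout is the case analysis in \ref{lemma:approximation_change:bilip}: since $\widetilde d$ can in principle be arbitrarily smaller than $d_Y$ via shortcuts through $S$, the intermediate vertices $p_{u'},p_{v'}$ that nearly realize $\widetilde d(x,y)$ carry no \emph{a priori} combinatorial relation to $u,v$, and their combinatorial proximity must be extracted purely from metric smallness through the interplay between \ref{a:4} and hypothesis \ref{lemma:approximation_change:lower}.
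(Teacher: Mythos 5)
The proof is correct across all three parts, and part \ref{lemma:approximation_change_image} is handled exactly as in the paper (applying Lemma \ref{lemma:approximation_qs_local} to the identity map, with hypothesis (1) coming from \ref{lemma:approximation_change:bt} and hypothesis (2) from the ``in particular'' clause of \ref{lemma:approximation_change:bilip} with $M=2K$). For part \ref{lemma:approximation_change:bilip}, your argument is the same in substance as the paper's, just organized from the opposite end: you fix a single $\varepsilon$-near-minimizing pair $p_{u'},p_{v'}$ and classify it by the metric conditions $d_Y(x,p_{u'})\lessgtr r_u/L$ (then transfer to combinatorics via \ref{a:4}/\ref{a:3}), whereas the paper lower-bounds $d_Y(x,p_s)+d_S(p_s,p_t)+d_Y(p_t,y)$ for \emph{arbitrary} $s,t\in V$ by a four-way case split on $k(u,s)\lessgtr 2K$ and $k(v,t)\lessgtr 2K$ (and then via \ref{a:6}). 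The content --- (A4) plus hypothesis \ref{lemma:approximation_change:lower} forcing near-minimizers to sit combinatorially close, and (A6) covering the complementary case --- is identical.

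Part \ref{lemma:approximation_change:bt} is where you take a genuinely different route. The paper's dichotomy is $k(u,v)<2K$ versus $k(u,v)\geq 2K$; in the second case it anchors the construction at the cell anchors $p_u,p_v$, joins $x$ to $p_u$ and $y$ to $p_v$ via Case 1, and controls $\widetilde d(p_u,x)+\widetilde d(p_v,y)\lesssim\widetilde d(x,y)$ through the estimate in \eqref{lemma:approximation_change:case2}, derived from \ref{lemma:approximation_change:bilip} and \ref{a:6}. Your dichotomy is $\widetilde d(x,y)=d_Y(x,y)$ versus $\widetilde d(x,y)<d_Y(x,y)$, and in the second case you anchor at the near-minimizing shortcut vertices $p_{u'},p_{v'}$, for which $d_Y(x,p_{u'})$ and $d_Y(p_{v'},y)$ are already $\lesssim\widetilde d(x,y)$ by the near-minimizing property --- no appeal to \ref{lemma:approximation_change:bilip} or to \ref{a:6} is needed for the endpoint pieces. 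The chain from hypothesis \ref{lemma:approximation_bt:2} is then applied to $(u',v')$ rather than $(u,v)$, and the bound $r_{w_i}\lesssim\widetilde d(x,y)$ falls out of hypothesis \ref{lemma:approximation_change:lower} plus the $d_S$-diameter bound of the chain, exactly as in the paper. Your variant is a bit leaner because the endpoint distances come for free; the paper's variant has the small structural advantage that its Case 1 is stated entirely in $d_Y$-terms. Both are valid and the constants depend only on $K,L$ in both.
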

\begin{proof}
First, we show \ref{lemma:approximation_change:bilip}.  Let $x,y\in X$ and $u,v\in V$ such that $x\in U_u$ and $y\in U_v$. Let $s,t\in V$. If $s=t$, then
$$d_Y(x,p_s)+d_S(p_s,p_t)+d_Y(p_t,y)\geq d_Y(x,y).$$
Suppose that $s\neq t$. If $k(u,s)<2K$ and $k(v,t)<2K$, then by  assumption \ref{lemma:approximation_change:lower} and \ref{a:3}, we have
$$d_Y(x,p_s)+d_S(p_s,p_t)+d_Y(p_t,y)\geq d_S(p_s,p_t)\geq L^{-1}\max\{r_s,r_t\} \gtrsim_{K,L}(r_u+r_v).$$
If $k(u,s)<2K$ and $k(v,t)\geq 2K$, then by  assumption \ref{lemma:approximation_change:lower}, \ref{a:3}, and \ref{a:6}, we have
\begin{align*}
d_Y(x,p_s)+d_S(p_s,p_t)+d_Y(p_t,y)\geq L^{-1}r_s+ L^{-1}r_v \gtrsim_{K,L}(r_u+r_v).
\end{align*}
The same is true when $k(u,s)\geq 2K$ and $k(v,t)<2K$. Finally, suppose that $k(u,s)\geq 2K$ and $k(v,t)\geq 2K$. Then by \ref{a:6} we have
$$d_Y(x,p_s)+d_S(p_s,p_t)+d_Y(p_t,y)\geq L^{-1}(r_u+r_v).$$
Combining all cases, we see that 
$$\inf_{s,t\in V}\{d_Y(x,p_s)+d_S(p_s,p_t)+d_Y(p_t,y) \} \gtrsim_{K,L} \min \{r_u+r_v, d_Y(x,y)\}.$$
The definition of $\widetilde d$ gives the first part of \ref{lemma:approximation_change:bilip}. For the second part, we note that if $k(u,v)<M$, then by \ref{a:2} and \ref{a:3} we obtain $d_Y(x,y)\lesssim_{L,M} r_u+r_v$. 

Next, we show \ref{lemma:approximation_change:bt}. Since $\widetilde d\leq d_Y$, the identity map from $(Y,d_Y)$ onto $(Y,\widetilde d)$ is continuous and if a set is connected in $(Y,d_Y)$, then it is also connected in $(Y,\widetilde d)$.  Let $x,y\in X$ and $u,v\in V$ such that $x\in U_u$ and $y\in U_v$. We consider two cases.

\smallskip
\noindent
\textit{Case 1:}  $k(u,v)<2K$. Then by the fact that $\widetilde d\leq d_Y$, assumption \ref{lemma:approximation_bt:1}, and conclusion \ref{lemma:approximation_change:bilip}, there exists a connected set $E$ containing $x,y$ with 
$$\diam_{\widetilde d}(E)\leq \diam_{d_Y}(E) \leq Ld_Y(x,y) \lesssim_{K,L}\widetilde d(x,y).$$

\smallskip
\noindent
\textit{Case 2:} $k(u,v)\geq 2K$. In this case, by \ref{a:6} we have $d_Y(x,y)\geq L^{-1}\max\{r_v,r_u\}$. Combining this with conclusion \ref{lemma:approximation_change:bilip} and with \ref{a:2}, we see that 
\begin{align}\label{lemma:approximation_change:case2}
\begin{aligned}
\widetilde d(x,y)&\gtrsim_{K,L} (r_u+r_v) \gtrsim_{K,L} (d_Y(p_u,x)+d_Y(p_v,y))\\
&\gtrsim_{K,L} (\widetilde d(p_u,x)+\widetilde d(p_v,y)).
\end{aligned}
\end{align}
Also, by Case 1, there exist a connected set $E_x$ containing $x$ and $p_u$ and a connected set $E_y$ containing $y$ and $p_v$ with 
\begin{align}\label{lemma:approximation_change:case2a}
\diam_{\widetilde d}(E_x)\lesssim_{K,L}\widetilde d(x,p_u)\quad\text{and}\quad \diam_{\widetilde d}(E_y) \lesssim_{K,L}\widetilde d(y,p_v). 
\end{align}
By assumption \ref{lemma:approximation_bt:2}, there exist vertices $u=w_0, w_1, \dots, w_n=v$, $n\geq 0$, with $k(w_{i-1},w_i)<K$ for each $i\in \{1,\dots,n\}$, such that
\begin{align}\label{lemma:approximation_change:case2b}
\diam_{d_S} (\{p_{w_0},\dots,p_{w_n} \})\leq Ld_S(p_u,p_v).
\end{align}
By Case 1, for each $i\in \{1,\dots,n\}$ there exists a connected set $E_i$ containing $p_{w_{i-1}}$ and $p_{w_i}$ such that $\diam_{\widetilde d}(E_i)\lesssim_{K,L} \widetilde d(p_{w_{i-1}},p_{w_i})$.   Let $E=\bigcup_{i=1}^n E_i$, which is connected. Then there exist $i,j\in \{0,\dots,n\}$ such that 
\begin{align*}
\diam_{\widetilde d}(E) &\leq \diam_{\widetilde d}(E_i)+  \widetilde d(p_{w_{i}},p_{w_j})+ \diam_{\widetilde d}(E_j)\\
&\lesssim_{K,L} \widetilde d(p_{w_{i-1}},p_{w_i}) + \widetilde d(p_{w_{i}},p_{w_j})+ \widetilde d(p_{w_{j-1}},p_{w_j})\\
&\simeq_{K,L} d_S(p_{w_{i-1}},p_{w_i}) +  d_S(p_{w_{i}},p_{w_j})+ d_S(p_{w_{j-1}},p_{w_j})\\
&\lesssim_{K,L} d_S(p_u,p_v)\simeq_{K,L}\widetilde d(p_u,p_v),
\end{align*}
where we used the fact that $d_S=\widetilde d$ on $S\times S$ (see Lemma \ref{lemma:glue}) and \eqref{lemma:approximation_change:case2b}. The set $E\cup E_x\cup E_y$ is connected and contains $x$ and $y$. Combining the above with \eqref{lemma:approximation_change:case2a} and \eqref{lemma:approximation_change:case2},  we have
\begin{align*}
\diam_{\widetilde d}(E\cup E_x\cup E_y) &\lesssim_{K,L} \widetilde d(p_u,p_v) + \widetilde d(p_u,x) +\widetilde d(p_v,y)\\
&\lesssim_{K,L} \widetilde d(p_u,x)+ \widetilde d(x,y)+ \widetilde d(p_v,y)\\
&\lesssim_{K,L} \widetilde d(x,y).
\end{align*}
This completes the proof of \ref{lemma:approximation_change:bt}.

For \ref{lemma:approximation_change_image}, consider the identity map from $(Y,d_Y)$ onto $(Y,\widetilde d)$. The space $(Y,\widetilde d)$ has $C(K,L)$-bounded turning by \ref{lemma:approximation_change:bt}. Also, by \ref{lemma:approximation_change:bilip}, for each $v\in V$, the identity map from $(\st_K(v),d_Y)$ onto $(\st_K(v), \widetilde d)$ is $C(K,L)$-bi-Lipschitz, and thus $\eta$-quasisymmetric for some $\eta$ depending only on $K$ and $L$. Therefore, the assumptions of Lemma \ref{lemma:approximation_qs_local} are satisfied for the identity map and we conclude that $\widetilde {\mathcal A}$ is a $(K,\widetilde L)$-approximation of $(Y,\widetilde d)$, where $\widetilde L=C(K,L)$. The fineness of $\widetilde {\mathcal A}$ follows from the fineness of $\mathcal A$.
\end{proof}

\subsection{Approximations and quasisymmetric homeomorphisms}
We state the main result of the section. Essentially it allows us to upgrade a local quasisymmetry between metric spaces $X$, $Y$ to a global one and even provides the flexibility of changing the metric in large scales of the target space $Y$.

\begin{theorem}\label{theorem:approximation_quasisymmetric}
Let $(X,d_X),(Y,d_Y)$ be connected metric spaces, $f\colon X\to Y$ be a homeomorphism, $\eta\colon [0,\infty)\to [0,\infty)$ be a homeomorphism, and $K,L\geq 1$. Let $\mathcal A=((V,\sim),\mathfrak{p},\mathfrak{r}, \mathcal U)$ be a fine {$(K,L)$-approximation} of $(X,d_X)$ and suppose that the following conditions are satisfied.
\begin{enumerate}[label=\normalfont{(\arabic*)}] 
	\item\label{tqs:bt} $(X,d_X)$ and $(Y,d_Y)$ have $L$-bounded turning.
	\item\label{tqs:sep} For every pair of distinct vertices $u,v\in V$ we have $d_X(p_u,p_v)\geq r_u/L$.
	\item\label{tqs:star}  For each $v\in V$,  $f|_{\st_{2K+1}(v)}\colon (\st_{2K+1}(v),d_X)\to (f(\st_{2K+1}(v)),d_Y)$ is $\eta$-quasi\-symmetric.
	\item\label{tqs:vert} For $S=f(\mathfrak p(V))$ there exists a metric $d_S\colon S\times S\to[0,\infty)$ such that $f\colon (\mathfrak{p}(V), d_X)\to (S,d_S)$ is $\eta$-quasisymmetric and $d_S\leq d_Y$ on $S\times S$. Moreover, if $u,v\in V$ and $u\sim v$, then $d_S(f(p_u),f(p_v))\geq L^{-1}d_Y(f(p_u),f(p_v))$.
\end{enumerate}
Consider the metric $\widetilde d$ on $Y$ arising by gluing $d_Y$ with $d_S$, as in Lemma \ref{lemma:glue}. Then there exists a homeomorphism $\eta'\colon [0,\infty)\to [0,\infty)$ depending only on $K,L$, and $\eta$, such that $f\colon (X,d_X)\to (Y,\widetilde d)$ is $\eta'$-quasisymmetric. 
\end{theorem}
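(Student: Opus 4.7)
The plan is to transport the given approximation $\mathcal A$ of $X$ through $f$ and then through the identity map $(Y,d_Y)\to(Y,\widetilde d)$, producing a fine approximation of $(Y,\widetilde d)$ with controlled parameters, and to deduce the global quasisymmetric inequality from the star quasisymmetry, the vertex quasisymmetry, and the approximation axioms. First I would apply Lemma \ref{lemma:approximation_qs_local} to $f\colon(X,d_X)\to(Y,d_Y)$: hypotheses \ref{tqs:bt} and \ref{tqs:star} (which gives $\eta$-quasisymmetry on $\st_{2K+1}(v)\supset\st_K(v)$) yield that $\mathcal A':=f(\mathcal A)$ is a fine $(K,L')$-approximation of $(Y,d_Y)$ for some $L'=C(K,L,\eta)$.

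Next I would apply Lemma \ref{lemma:approximation_change} to $\mathcal A'$ with gluing set $S=f(\mathfrak p(V))$ and metric $d_S$. The bounded turning and the inequality $d_S\leq d_Y$ come from \ref{tqs:bt} and \ref{tqs:vert}, and the chain condition follows from Lemma \ref{lemma:approximation_connected}. The delicate separation condition $d_S(p'_u,p'_v)\geq r'_v/L''$ for distinct $u,v\in V$ I would verify in two sub-cases. For an adjacent pair $u\sim v$, I would apply Lemma \ref{lemma:qs_heinonen} to the map $f|_{\st_{2K+1}(v)}$ from \ref{tqs:star} with $A=\{p_u,p_v\}$ and $B=\st_{2K+1}(v)$: axioms \ref{a:2}, \ref{a:3}, and \ref{tqs:sep} give $d_X(p_u,p_v)\simeq_{K,L}\diam(\st_{2K+1}(v))$, so $d_Y(p'_u,p'_v)\gtrsim_{K,L,\eta}\diam(f(\st_{2K+1}(v)))\simeq_{K,L,\eta}r'_v$, and \ref{tqs:vert} promotes this to $d_S(p'_u,p'_v)\gtrsim_{K,L,\eta}r'_v$. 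For a general distinct pair $u,v$, I would select an adjacent vertex $w\sim v$ (which exists by \ref{a:5con}) and apply the global vertex-quasisymmetry from \ref{tqs:vert} to the triple $(p_v,p_u,p_w)$: since \ref{a:2}, \ref{a:3}, and \ref{tqs:sep} bound $d_X(p_v,p_w)/d_X(p_v,p_u)\leq C(L)$, one obtains $d_S(p'_u,p'_v)\gtrsim_{\eta,L}d_S(p'_w,p'_v)\gtrsim_{K,L,\eta}r'_v$. Conclusions \ref{lemma:approximation_change_image} and \ref{lemma:approximation_change:bt} of Lemma \ref{lemma:approximation_change} then yield a fine $(K,\widetilde L)$-approximation $\widetilde{\mathcal A}$ of $(Y,\widetilde d)$ with $(Y,\widetilde d)$ having $C(K,L)$-bounded turning.

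Finally I would verify the global $\eta'$-quasisymmetric inequality for $f\colon(X,d_X)\to(Y,\widetilde d)$. Given distinct $x,y,z\in X$, pick $u,v,w\in V$ with $x\in U_u$, $y\in U_v$, $z\in U_w$. If both $k(u,v)<2K+1$ and $k(u,w)<2K+1$, then $x,y,z\in\st_{2K+1}(u)$, so \ref{tqs:star} bounds the $d_Y$-ratio by $\eta$, and Lemma \ref{lemma:approximation_change} \ref{lemma:approximation_change:bilip} identifies this with the $\widetilde d$-ratio up to a factor $C(K,L,\eta)$. Otherwise, for a pair with combinatorial distance $\geq 2K+1$, axioms \ref{a:6} and \ref{a:7} give $d_X(x,y)\simeq_L d_X(p_u,p_v)$ while Lemma \ref{lemma:approximation_change} \ref{lemma:approximation_change:bilip} combined with the separation from Step 2 gives $\widetilde d(f(x),f(y))\simeq_{K,L,\eta}d_S(p'_u,p'_v)$; the global vertex quasisymmetry of \ref{tqs:vert} then converts ratios of $d_X$-distances on vertices to ratios of $d_S$-distances, and reassembly produces the required $\eta'$-estimate with $\eta'$ depending only on $K,L,\eta$. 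The hardest part is the separation estimate in Step 2 for non-adjacent pairs, where only ratio control is available from the vertex quasisymmetry and the lower bound has to be anchored to an adjacent pair via \ref{tqs:vert} and propagated combinatorially.
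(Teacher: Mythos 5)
Your Steps 1 and 2 follow the paper's proof closely: transport $\mathcal A$ through $f$ via Lemma \ref{lemma:approximation_qs_local}, then feed $\mathcal A'$ into Lemma \ref{lemma:approximation_change}, and your verification of the separation condition (first for $u\sim v$ via Lemma \ref{lemma:qs_heinonen} and \ref{tqs:vert}, then for a general distinct pair by anchoring to an adjacent vertex and using the vertex quasisymmetry from \ref{tqs:vert}) is correct and essentially what the paper does. After Step 2, you have in fact verified all the hypotheses of Lemma \ref{lemma:wqs} for $f\colon (X,d_X)\to (Y,\widetilde d)$: the vertex quasisymmetry holds because $\widetilde d=d_S$ on $S\times S$, the star quasisymmetry into $\widetilde d$ follows from \ref{tqs:star} composed with the bi-Lipschitz conclusion \ref{lemma:approximation_change:bilip}, and the image is an approximation by \ref{lemma:approximation_change_image}. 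The paper's proof ends here by invoking Lemma \ref{lemma:wqs}.

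The gap is in your Step 3, where you instead re-derive the local-to-global step inline. You only explicitly treat the case where both $k(u,v)$ and $k(u,w)$ are small (reduce to \ref{tqs:star}) and the case where both are large (reduce to \ref{a:7} and the vertex quasisymmetry). The mixed cases — exactly one of the pairs $(u,v)$, $(u,w)$ combinatorially far from $u$ — are not addressed, and ``reassembly produces the required $\eta'$-estimate'' does not go through as stated: if, say, $k(u,w)\geq 2K$ but $k(u,v)<2K$, then \ref{a:7} applies to $(u,w)$ but not to $(u,v)$, so the target ratio $\widetilde d(x',y')/\widetilde d(x',z')$ cannot be read off from $d_S(p_u',p_v')/d_S(p_u',p_w')$. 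A naive estimate in this case only gives $\widetilde d(x',y')/\widetilde d(x',z')\lesssim 1$, which is not enough: a distortion function must satisfy $\eta'(t)\to 0$ as $t\to 0$. The paper's Lemma \ref{lemma:wqs} handles this (Cases 3 and 4) by introducing an auxiliary vertex $s$ with $2K\le k(u,s)<2K+1$ to bridge the scale gap, splitting the ratio through $p_s$ and using the sub-multiplicativity trick $\eta(t_1)\eta(Ct_2)\lesssim \eta(\sqrt{t})$ when $t_1,t_2\lesssim 1$ and $t=t_1t_2$. Either supply this argument or simply invoke Lemma \ref{lemma:wqs}, whose assumptions you have already checked; as written, Step 3 does not establish the theorem.
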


We establish a preliminary statement before giving the proof of Theorem \ref{theorem:approximation_quasisymmetric}. We believe that the constant $2K+1$ appearing in assumption \ref{tqs:star} above and also in the lemma below can be replaced with $K$, but we do not include a proof for the sake of brevity.

\begin{lemma}[Local to global quasisymmetry]\label{lemma:wqs}
Let $X,Y$ be connected metric spaces, $f\colon X\to Y$ be a homeomorphism, $\eta\colon [0,\infty)\to [0,\infty)$ be a homeomorphism, and $K,L\geq 1$. Suppose that $\mathcal A=((V,\sim),\mathfrak{p},\mathfrak{r}, \mathcal U)$ is a fine $(K,L)$-approximation of $X$ and the following conditions hold. \begin{enumerate}[label=\normalfont{(\arabic*)}]
	\item\label{wqs:approx} $\mathcal A'=f(\mathcal A)$ is a $(K,L)$-approximation of $Y$.
	\item\label{wqs:star} For each $v\in V$,  $f|_{\st_{2K+1}(v)}$ is $\eta$-quasisymmetric. 
	\item\label{wqs:vertices} $f|_{\mathfrak p(V)}$ is $\eta$-quasisymmetric. 
\end{enumerate}
Then there exists a homeomorphism $\eta'\colon [0,\infty)\to [0,\infty)$ depending only on $K,L$, and $\eta$ such that $f\colon X\to Y$ is $\eta'$-quasisymmetric. 
\end{lemma}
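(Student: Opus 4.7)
The plan is to verify the quasisymmetric inequality for $f$ by a case analysis on the combinatorial structure of $G$. Given distinct $x, y, z \in X$, pick $u, v, w \in V$ with $x \in U_u$, $y \in U_v$, $z \in U_w$, and call $y$ \emph{close} to $x$ when $k(u, v) \leq 2K$ (so $y \in \st_{2K+1}(u)$) and \emph{far} otherwise; similarly for $z$. If both $y$ and $z$ are close, the bound is immediate from assumption~(2) applied on the star $\st_{2K+1}(u)$. If both are far, property~(A7) in both $\mathcal{A}$ and $\mathcal{A}'$ gives $d_X(x, y) \simeq_L d_X(p_u, p_v)$ and $d_Y(f(x), f(y)) \simeq_L d_Y(p'_u, p'_v)$, and analogously for $z$, whence the vertex quasisymmetry of assumption~(3) applied to the triple $(p_u, p_v, p_w)$ delivers the ratio bound up to constant factors.

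For the two mixed cases the strategy is to derive universal upper and lower bounds
\begin{align*}
d_Y(f(x), f(a)) &\leq C\, \eta(C\, d_X(x, a)/r_u)\, r'_u,\\
d_Y(f(x), f(a)) &\geq \frac{r'_u}{C\, \eta(C\, r_u/d_X(x, a))},
\end{align*}
valid for every $a \in X\setminus\{x\}$, with $C = C(K, L)$. For \emph{close} $a$, both follow from Lemma~\ref{lemma:qs_heinonen} applied on $\st_{2K+1}(u)$ together with the scale estimates $\diam \st_{2K+1}(u) \simeq_{K,L} r_u$ and $\diam f(\st_{2K+1}(u)) \simeq_{K,L} r'_u$, which come from iterating (A2) and (A3). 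For \emph{far} $a$ I split on $k(u, v_a) \leq 4K$ versus $k(u, v_a) > 4K$: in the first sub-case an intermediate vertex $s$ on a shortest $u$-$v_a$ chain satisfies $x, a \in \st_{2K+1}(s)$, and Lemma~\ref{lemma:qs_heinonen} applies on $\st_{2K+1}(s)$; in the second sub-case I choose $s$ with $k(u, s) = 2K$, so that $p_s \in \st_{2K+1}(u)$, and apply assumption~(3) to $(p_u, p_{v_a}, p_s)$, using (A7) in both approximations to pass between $\{x, a\}$ and $\{p_u, p_{v_a}\}$ and the star estimate to bound $d_Y(p'_u, p'_s)$.

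Dividing the upper bound at $a=y$ by the lower bound at $a=z$ produces an estimate of the form $C\, \eta(Cs)\, \eta(Ct)$ with $s = d_X(x,y)/r_u$, $t = r_u/d_X(x,z)$, and $st = T := d_X(x,y)/d_X(x,z)$. In the mixed case where $y$ is close and $z$ is far, both $s$ and $t$ are bounded above by a constant depending on $K, L$, so the elementary inequality $\min(s, t) \leq \sqrt{T}$ yields $\eta(Cs)\eta(Ct) \lesssim \eta(C'\sqrt T)$; in the case where $y$ is far and $z$ is close, both $s$ and $t$ are bounded below by a positive constant, so $\max(s, t) \leq T/c_0$ and $\eta(Cs)\eta(Ct) \leq \eta(C'T)^2$. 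Taking the maximum of all four case bounds yields a single distortion function $\eta'$ depending only on $K, L, \eta$. The hard part will be establishing the universal upper and lower bounds for far $a$: the combinatorial distance $k(u, v_a)$ is unbounded, so no single star contains both $x$ and $a$, and only by invoking (A7) in both approximations together with the vertex quasisymmetry~(3) can one transfer scales from $X$ to $Y$ in a controlled way.
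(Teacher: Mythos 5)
Your proposal is correct and follows essentially the same route as the paper's proof: the same four-way case split on the combinatorial distances $k(u,v)$, $k(u,w)$, the same use of \ref{a:7} plus the vertex quasisymmetry for the far--far case, the same auxiliary vertex $s$ with $k(u,s)\approx 2K$ in the mixed cases, and the same $\min(s,t)\le\sqrt{st}$ / $\max(s,t)\le st/c_0$ trick to produce $\eta(C\sqrt{t})$ and $\eta(Ct)^2$ bounds. Your packaging of the mixed-case estimates as two-sided universal bounds pivoting on the scales $r_u,r_u'$ is only a cosmetic reorganization of the paper's argument, since $d_X(x,p_s)\simeq_{K,L} r_u$ and $d_Y(f(x),f(p_s))\simeq_{K,L} r_u'$.
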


\begin{proof}
For $x\in X$, we use the notation $x'=f(x)$. Let $x,y,z\in X$ such that $d_X(x,y)=t d_X(x,z)$ for some $t>0$. Our goal is to show that the images $x',y',z'$ satisfy $$d_Y(x',y')\leq \eta'(t)\cdot  d_Y(x',z')$$
for some distortion function $\eta'$ that depends only on $K,L$, and $\eta$. Consider vertices $u,v,w\in V$ such that $x\in U_{u}$, $y\in U_v$, and $z\in U_w$. We consider several cases for the relative positions of these vertices. 

\smallskip
\noindent
\textit{Case 1:} $k(u,v)<2K$ and $k (u,w)<2K$. Then $x,y,z\in \st_{2K}(u)$. By assumption \ref{wqs:star}, $f$ is an $\eta$-quasisymmetry on $\st_{2K+1}(u)$, so $d_Y(x',y')\leq \eta(t) d_Y(x',z')$.

\smallskip
\noindent
\textit{Case 2:} $k(u,v)\geq 2K$ and $k(u,w)\geq 2K$. By \ref{a:7}, we have
\begin{align*}
d_{X}(p_u,p_v) \simeq_L d_X(x,y)\quad \text{and} \quad d_Y(p_u',p_v') \simeq_L d_Y(x',y'),
\end{align*}
and
\begin{align*}
d_{X}(p_u,p_w) \simeq_L d_X(x,z)\quad \text{and} \quad d_Y(p_u',p_w') \simeq_L d_Y(x',z')
\end{align*}
Thus, we have $d_X(p_u,p_v)\leq C(L) \cdot t \cdot d_X(p_u,p_w)$. Since $f|_{\mathfrak p(V)}$ is $\eta$-quasisymmetric, we conclude that $d_Y(p_u',p_v') \leq \eta(C(L)\cdot t) d_Y(p_u',p_w')$, from which we obtain 
$$d_Y(x',y')\leq C(L)\eta(C(L)\cdot t) d_Y(x',z').$$

\smallskip
\noindent
\textit{Case 3:} $k(u,v)\geq 2K$ and  $k(u,w)<2K$. Let $s\in V$ be such that $2K\leq  k(u,s) <2K+1$; such $s$ exists because $k(u,v)\geq 2K$ and the graph $(V,\sim)$ is connected by \ref{a:5con}. Property \ref{a:7} implies that $d_X(p_u,p_v)\simeq_L d_X(x,y)$ and the latter equals $t\cdot d_X(z,z)$ by assumption. Since $x,z\in \st_{2K}(u)$, we have $d_X(x,z)\lesssim_{K,L}r_u$, by \ref{a:2} and \ref{a:3}. Finally, by \ref{a:6} we have $r_u\lesssim_L d_X(p_u,p_s)$. Combining these estimates, we have
$$d_X(p_u,p_v)\lesssim_{K,L}t \cdot  d_X(p_u,p_s).$$
Since $f$ is $\eta$-quasisymmetric on $\mathfrak{p}(V)$, we obtain
\begin{align*}\
d_Y(p_u',p_v') \leq \eta(C(K,L)\cdot t) d_Y(p_u',p_s').
\end{align*}
Invoking again \ref{a:7}, we obtain $d_Y(p_u',p_v')\simeq_L d_Y(x',y')$, so
\begin{align}\label{lemma:wqs:3p}
d_Y(x',y')\leq C(L) \eta(C(K,L)\cdot t)d_Y(p_u',p_s').
\end{align}
It remains to bound the term $d_Y(p_u',p_s')$ in the right-hand side by a certain multiple of $d_Y(x',z')$.

By the condition $k(u,s)<2K+1$, \ref{a:2}, and \ref{a:3}, we have $d_X(x,p_s)\lesssim_{K,L}r_u$. By \ref{a:3}, we have $U_{u}\cap U_{v}=\emptyset$, so $r_{u}\leq d_X(p_u,p_v)$. Therefore, by the above,
$$d_X(x,p_s)\lesssim_{K,L}r_u\lesssim_{K,L}d_X(p_u,p_v)\simeq_{K,L} d_X(x,y)\simeq_{K,L}t\cdot d_X(x,z).$$
Note that $x,p_s,z\in \st_{2K+1}(u)$. Since $f$ is $\eta$-quasisymmetric in $\st_{2K+1}(u)$, we conclude that
\begin{align}\label{lemma:wqs:3y}
d_Y(x',p_s')\leq \eta(C(K,L)\cdot t)d_Y(x',z').
\end{align}
By \ref{a:2} and \ref{a:3}, $d_Y(p_u',p_s')\lesssim_{K,L}r_u'$. By \ref{a:6} we have $r_u'\lesssim_L d_Y(x',p_s')$. These two estimates and \eqref{lemma:wqs:3y} give
$$d_Y(p_u',p_s')\leq C(K,L) \eta(C(K,L) \cdot t) d_Y(x',z').$$
This estimate, combined with \eqref{lemma:wqs:3p} gives the desired
$$d_Y(x',y')\leq C(K,L) \eta(C(K,L)\cdot t)^2 d_Y(x',z').$$

\smallskip
\noindent
\textit{Case 4:}  $k(u,v)<2K$ and $k(u,w)\geq 2K$. As in the previous case, let $s\in V$ be such that $2K\leq  k(u,s) <2K+1$. Consider $t_1>0$ such that $d_X(x,y)=t_1 d_X(x,p_s)$. By \ref{a:2} and \ref{a:3}, we have $d_X(x,y)\lesssim_{K,L}r_u$. Also, $d_X(x,p_s)\gtrsim_{L}r_u$ by \ref{a:6}. These estimates give 
$$t_1\lesssim_{K,L}1.$$
Note that $x,y,p_s\in \st_{2K+1}(u)$ and since $f$ is an $\eta$-quasisymmetry on $\st_{2K+1}(u)$, we have 
\begin{align}\label{lemma:wqs:4t1}
d_Y(x',y')\leq \eta(t_1) d_Y(x',p_s').
\end{align}

Next, we have $t_1 d_X(x,p_s)=d_X(x,y)=td_X(x,z)$, so $d_X(x,p_s)=t_2d_X(x,z)$, where $t_2=t/t_1$. We have $d_X(x,p_s)\lesssim_{K,L}r_u$ by \ref{a:2} and \ref{a:3}. Also, $d_X(x,z) \gtrsim_L r_u$ by \ref{a:6}. Combining these estimates, we have $$t_2\lesssim_{K,L}1.$$
Since $k(u,s)\geq 2K$, $k(u,w)\geq 2K$, and $d_X(x,p_s)=t_2d_X(x,z)$, by applying Case 2, we have
$$d_Y(x',p_s')\leq C(L)\eta(C(L)\cdot t_2)d_Y(x',z').$$
Combining this inequality with \eqref{lemma:wqs:4t1}, we have
$$d_Y(x',y') \leq C(L)\eta(t_1) \cdot \eta(C(L)\cdot t_2) d_Y(x',z').$$
If $t_1\leq t_2$, then, given that $t_2\lesssim_{K,L}1$, we have
$$\eta(t_1) \cdot \eta(C(L)\cdot t_2)\leq \eta(\sqrt{t_1t_2}) \cdot \eta(C(K,L))=C(K,L,\eta)\cdot \eta(\sqrt{t}).$$
If $t_2<t_1$, then we similarly have
$$\eta(t_1) \cdot \eta(C(L)\cdot t_2) \leq C(K,L,\eta)\cdot \eta(C(L)\sqrt{t}).$$
Summarizing, 
$$d_Y(x',y') \leq C(K,L,\eta)\eta(C(L)\sqrt{t})d_Y(x',z').$$
This completes the proof of Case 4.

\medskip

Finally, if we combine Cases 1--4, we obtain that $f$ is an $\eta'$-quasisymmetry for
$$\eta'(t)=C(K,L,\eta) \max \{ \eta(C(L)\cdot t), \eta(C(K,L)\cdot t)^2, \eta(C(L)\sqrt{t})\}.$$
This completes the proof. 
\end{proof}

\begin{proof}[Proof of Theorem \ref{theorem:approximation_quasisymmetric}]
Our goal is to show that $f\colon (X,d_X)\to (Y,\widetilde d)$ satisfies the assumptions of Lemma \ref{lemma:wqs}. Note that assumption \ref{wqs:vertices}  of Lemma \ref{lemma:wqs} follows immediately from assumption \ref{tqs:vert} of Theorem \ref{theorem:approximation_quasisymmetric}; recall that $\widetilde d=d_S$ on $S\times S$ by Lemma \ref{lemma:glue} \ref{g:cases}.  It remains to verify assumptions \ref{wqs:approx} and \ref{wqs:star} of Lemma \ref{lemma:wqs}. 

We first ensure that Lemma \ref{lemma:approximation_change} is applicable. By assumptions \ref{tqs:bt} and \ref{tqs:star} of Theorem \ref{theorem:approximation_quasisymmetric}, and Lemma \ref{lemma:approximation_qs_local}, $\mathcal A'=f(\mathcal A)$ is a fine $(K,L')$-ap\-prox\-i\-mation of $(Y,d_Y)$ with $L'=C(K,L,\eta)$, as required in Lemma \ref{lemma:approximation_change}. Note that assumptions \ref{lemma:approximation_bt:1} and \ref{lemma:approximation_change:ineq} of Lemma \ref{lemma:approximation_change} are true, directly from the assumptions of Theorem \ref{theorem:approximation_quasisymmetric}. 

Next, we establish that condition \ref{lemma:approximation_change:lower} in Lemma \ref{lemma:approximation_change} holds.  Let $u,v\in V$ with $u\sim v$. By assumption \ref{tqs:sep} of Theorem \ref{theorem:approximation_quasisymmetric} and \ref{a:2}--\ref{a:3}, we have $d_X(p_u,p_v)\simeq_{K,L}\diam_X(\st_K(v))$. Assumption \ref{tqs:star} of Theorem \ref{theorem:approximation_quasisymmetric} and Lemma \ref{lemma:qs_heinonen} imply that 
$$d_Y(p_u',p_v') \simeq_{K,L,\eta} \diam_{d_Y}(f(\st_K(v))).$$
Since $\mathcal A'$ is a $(K,L')$-ap\-prox\-i\-mation of $(Y,d_Y)$, we have $\diam_{d_Y}(f(\st_K(v)))\simeq_{K,L'}r_v'$ by \ref{a:2}--\ref{a:3}. Altogether, 
$$d_Y(p_u',p_v') \simeq_{K,L,\eta} r_v'.$$
By assumption \ref{tqs:vert} of Theorem \ref{theorem:approximation_quasisymmetric}, $d_S(p_u',p_v')\simeq_L d_Y(p_u',p_v')$. Hence, 
$$d_S(p_u',p_v')\simeq_{K,L,\eta}r_v'.$$
This shows condition \ref{lemma:approximation_change:lower} of Lemma \ref{lemma:approximation_change} in the case that $u\sim v$. 

Now, suppose that $u,v\in V$ are arbitrary and distinct. Let $w\in V$ with $w\sim u$. Then $d_X(p_u,p_w)\simeq_{K,L} r_u$ and $d_S(p_u',p_w')\simeq_{K,L,\eta}r_u'$ by the above. By assumption \ref{tqs:sep} of Theorem \ref{theorem:approximation_quasisymmetric}, we have $d_X(p_u,p_v)\gtrsim_{K,L} d_X(p_u,p_w)$. By assumption \ref{tqs:vert} of Theorem \ref{theorem:approximation_quasisymmetric} we have $d_S(p_u',p_v')\gtrsim_{K,L,\eta} d_S( p_u',p_w')\simeq_{K,L,\eta} r_u'$. This completes the proof of condition \ref{lemma:approximation_change:lower} of Lemma \ref{lemma:approximation_change} in that case as well.

We proceed with assumption \ref{lemma:approximation_bt:2} of Lemma \ref{lemma:approximation_change}. The assumption that $X$ has $L$-bounded turning implies that the conclusion of Lemma \ref{lemma:approximation_connected} is true on the set $\mathfrak{p}(V)$. By Lemma \ref{lemma:qs_heinonen}, this conclusion is invariant up to constants under quasisymmetries from $(\mathfrak{p}(V),d_X)$ to $(S,d_S)$. Hence, assumption \ref{tqs:vert} of Theorem \ref{theorem:approximation_quasisymmetric} implies that the conclusion of Lemma \ref{lemma:approximation_connected} holds for the set $S$ with metric $d_S$, quantitatively. Equivalently, condition \ref{lemma:approximation_bt:2} in Lemma \ref{lemma:approximation_change} holds with a constant $C(L,\eta)$ in place of $L$. 

We have established all assumptions of  Lemma \ref{lemma:approximation_change}. By Lemma \ref{lemma:approximation_change} \ref{lemma:approximation_change_image} we conclude that the image $\widetilde {\mathcal A}$ of $\mathcal A'$ under the identity map from $(Y,d_Y)$ onto $(Y,\widetilde d)$ is a $(K,\widetilde L)$-approximation of $(Y,\widetilde d)$, where $\widetilde L= C(K,L,\eta)$. This verifies condition \ref{wqs:approx} of Lemma \ref{lemma:wqs}. By Lemma \ref{lemma:approximation_change} \ref{lemma:approximation_change:bilip}, identity map from $(f(\st_{2K+1}(v)), d_Y)$ onto $(f(\st_{2K+1}(v)), \widetilde d)$ is $C(K,L,\eta)$-bi-Lipschitz for each $v\in V$. By assumption \ref{tqs:star} of  Theorem \ref{theorem:approximation_quasisymmetric}, we conclude that $f\colon (\st_{2K+1}(v), d_X)\to (f(\st_{2K+1}(v)), \widetilde d)$ is quasisymmetric, quantitatively, as required in Lemma \ref{lemma:wqs} \ref{wqs:star}. We have verified all assumptions of Lemma \ref{lemma:wqs}, as desired.
\end{proof}

\section{Simplicial complexes}\label{section:simplicial}

In this section we introduce metric and quasiconformal simplicial complexes. Then we discuss approximations of those complexes in the sense of Section \ref{section:approximation}.

\subsection{Metric simplicial complexes}

We provide some definitions. We direct the reader to \cite{BridsonHaefliger:metric}*{Chapter I.7} for further details.

Let $n,m\in \N\cup \{0\}$ with $n\leq m$. An \textit{$n$-simplex} $S\subset \R^m$ is the convex hull of $n+1$ points in general position, i.e., not lying in the same $n$-dimensional plane. The points are called the \textit{vertices} of $S$. If the mutual distance of vertices is $1$, then $S$ is called the \textit{standard $n$-simplex} (which is unique up to isometry). A \textit{face} $T\subset S$ is the convex hull of a non-empty subset of vertices of $S$. Each face $T$ of $S$ is a $k$-simplex for some $k\leq n$. 

\begin{definition}
Let $\{S_{\lambda}\}_{\lambda\in \Lambda}$ be a family of simplices. Let $Z=\bigcup_{\lambda\in \Lambda} (S_\lambda\times \{\lambda\})$ and $\sim$ be an equivalence relation on $Z$. The space $X=Z/\sim $ is a \textit{simplicial complex} if the natural projection $p\colon Z\to X$ satisfies the following conditions.
\begin{enumerate}[label=\normalfont{(\arabic*)}]
	\item For every $\lambda\in \Lambda$ the map $p_\lambda=p(\cdot,\lambda)\colon S_{\lambda}\to X$ is injective.
	\item If $p_{\lambda}(S_\lambda)\cap p_{\lambda'}(S_{\lambda'})\neq \emptyset$, then for each point $q\in p_{\lambda}(S_\lambda)\cap p_{\lambda'}(S_{\lambda'})$ there exists a face $T_\lambda$ of $S_\lambda$, a face $T_{\lambda'}$ of $S_{\lambda'}$, and an isometry $h_{\lambda,\lambda'}\colon T_\lambda\to T_{\lambda'}$ such that $q\in p_{\lambda}(T_\lambda)\cap p_{\lambda'}(T_{\lambda'})$ and
	$$\text{$p(x,\lambda)=p(x',\lambda')$ if and only if $x'=h_{\lambda,\lambda'}(x)$.}$$
\end{enumerate}
\end{definition}

By the above definition, the vertices and faces of $S_\lambda$, $\lambda\in \Lambda$, yield naturally the notions of vertices and faces of $X$. As a consequence, if a face of $X$ is contained in a union of faces, then it must be contained in one of them. Let $S\subset p_{\lambda}(S_{\lambda})$ be a face of $X$. We define $d_S(p_\lambda(x),p_{\lambda}(y))$ to be the Euclidean distance of the points $x$ and $y$. This gives a length metric on $S$.

Let $x,y\in X$. A \textit{string} from $x$ to $y$ is a finite sequence $\Sigma=(z_0,z_1,\dots,z_m)$, where $x=z_0,z_m=y$, $m\in \N$, such that $z_{i-1},z_i$ lie in the same face $S(i)\subset X$ for each $i\in \{1,\dots,m\}$. The length of $\Sigma$ is defined as 
\begin{align*}
\ell(\Sigma)= \sum_{i=1}^m d_{S(i)}(z_{i-1},z_i).
\end{align*} 
The \textit{trace} of $\Sigma$ is the union of the line segments from $z_{i-1}$ to $z_i$, $i\in \{1,\dots,m\}$, and is denoted by $|\Sigma|$. The \textit{intrinsic pseudometric} on $X$ is defined by
\begin{align*}
d_X(x,y)\coloneqq \inf\{ \ell(\Sigma) : \text{$\Sigma$ is a string from $x$ to $y$} \}.
\end{align*}
If there is no such string, then $d_X(x,y)=\infty$. If $d_X$ is a metric, i.e., for every pair of distinct points $x,y$ we have $0<d_X(x,y)<\infty$, then $(X,d_X)$ is called a \textit{metric simplicial complex}. In that case, $(X,d_X)$ is a length space.

Let $X, Y$ be simplicial complexes. A map $f\colon X\to Y$ is \textit{simplicial} if it maps each simplex of $X$ linearly onto a simplex of $Y$. We say that $f$ is a simplicial isomorphism if $f$ is in addition bijective. In that case, $f^{-1}$ is also a simplicial map.

Let $X$ be a simplicial complex. For a point $x\in X$, let $S(x)$ be the collection of all simplices containing $x$. For $S\in S(x)$, define $\varepsilon (x,S)$ to be the distance in the length metric of $S$ from $x$ to the union of faces of $S$ that do not contain $x$. Also, let $\varepsilon(x)=\inf\varepsilon (x,S)$ where the infimum is over all simplices $S\in S(x)$. Then 
\begin{align}\label{simplicial:basic}
\begin{aligned}
&B_{d_X}(x,\varepsilon(x))\subset \bigcup S(x) \quad \text{and} \\
&d_{X}(x,y)=d_S(x,y) \,\,\, \text{for}\,\,\, y\in B_{d_X}(x,\varepsilon(x))\cap S,\,\,\, \text{where}\,\,\, S\in S(x). 
\end{aligned}
\end{align}
See \cite{BridsonHaefliger:metric}*{Lemma I.7.9} for an argument. We list some elementary properties of simplicial complexes.

\begin{enumerate}[label=\normalfont{(SC\arabic*)}]
	\item\label{sc1} If $\varepsilon(x)>0$ for each $x\in X$ then $(X,d_X)$ is a length space \cite{BridsonHaefliger:metric}*{Corollary I.7.10}.
	\item\label{sc2} Let $x\in X$. If $S(x)$ contains finitely many simplices, then $\varepsilon(x)>0$. In that case, we say that $X$ is \textit{locally finite} at $x$. If $X$ is locally finite at each point, then we say that $X$ is \textit{locally finite}. By \ref{sc1}, a locally finite simplicial complex is a length space.
	\item\label{sc3} Let $f$ be a {simplicial map} from $X$ onto a simplicial complex $Y$. If $X$ is locally finite, it is a consequence of \eqref{simplicial:basic} that $f$ is continuous. If $f$ is a simplicial isomorphism, then $f$ is a homeomorphism. 
\end{enumerate}

\subsection{Quasiconformal simplicial complexes}
We introduce the notion of a quasiconformal simplicial complex, by requiring that the simplices have uniform geometry.
\begin{definition}\label{definition:qc_complex}
Let $X$ be a simplicial complex and $M\geq 1$. We say that $X$ is an $M$-\textit{quasiconformal simplicial complex} if the following conditions hold.
\begin{enumerate}[label=\normalfont{(QC-SC\arabic*)},wide=\parindent, leftmargin=*]
	\item\label{qcsc:1} For each $x\in X$, $\#S(x)\leq M$. That is, $x$ belongs to at most $M$ simplices.
	\item\label{qcsc:2} For each $n$-simplex $S\subset X$, where $n\in \N$, there exists a linear map $\tau$ from $S$ onto the standard $n$-simplex in $\R^n$ such that
	$$M^{-1} \frac{d_S(x,y)}{\diam_{d_S}(S)}\leq  |\tau(x)-\tau(y)|\leq M \frac{d_S(x,y)}{\diam_{d_S}(S)}$$
	for all $x,y\in S$. 
	\item\label{qcsc:3} For each $x\in X$, if $S,T\in S(x)$ are non-degenerate simplices, then
	$$M^{-1}\leq  \frac{\diam_{d_S}(S)}{\diam_{d_T}(T)}\leq M.$$
\end{enumerate}
\end{definition}

\begin{remark}
The bound $\#S(x)\leq M$ implies that $X$ does not have any $n$-simplices for $n>M$.  
\end{remark}

Our goal is to prove that a simplicial isomorphism between quasiconformal simplicial complexes is locally bi-Lipschitz in a quantitative sense; see Lemma \ref{lemma:simplicial:approximation:isomorphism}.

\begin{lemma}\label{lemma:simplicial:lower}
Let  $X$ be a simplicial complex and $x,y\in X$ be distinct vertices. Then $d_X(x,y)\geq \varepsilon(x)$. If, in addition, there exists $M\geq 1$ such that $X$ is an $M$-quasiconformal simplicial complex and the vertices $x,y$  lie in a simplex $S\subset X$, then $$\varepsilon(x)\leq d_X(x,y)\leq \diam_{d_X}(S)\leq \diam_{d_S}(S)\leq  c(M)\varepsilon(x).$$
\end{lemma}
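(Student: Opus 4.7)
The plan is to establish the chain of inequalities step by step. For the first claim $d_X(x,y) \geq \varepsilon(x)$ (which holds for any simplicial complex, not only the quasiconformal setup), I argue by contradiction: if $\varepsilon(x) = 0$ the inequality is trivial, so suppose $\varepsilon(x) > 0$ and $d_X(x,y) < \varepsilon(x)$. Then $y \in B_{d_X}(x, \varepsilon(x))$, and by \eqref{simplicial:basic} there is $T \in S(x)$ with $y \in T$ and $d_X(x,y) = d_T(x,y)$. Since $y$ is a vertex distinct from $x$, the singleton $\{y\}$ is a face of $T$ not containing $x$, so $d_T(x,y) \geq \varepsilon(x,T) \geq \varepsilon(x)$, a contradiction.

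Under the quasiconformal hypothesis, with $x, y \in S$, the middle two inequalities are routine: $d_X(x,y) \leq \diam_{d_X}(S)$ because $x, y \in S$, while $\diam_{d_X}(S) \leq \diam_{d_S}(S)$ follows because the single-step string $(a,b)$ witnesses $d_X(a,b) \leq d_S(a,b)$ for every $a, b \in S$. The heart of the matter is the last inequality $\diam_{d_S}(S) \leq c(M) \varepsilon(x)$. For the standard $n$-simplex, $1 \leq n \leq M$, the distance from any vertex to the union of opposite faces is bounded below by an explicit positive constant $c_0(n)$. Applying \ref{qcsc:2} to any non-degenerate $T \in S(x)$, I pull this bound back through the $M$-distorted linear map to obtain
\[ \varepsilon(x, T) \geq M^{-1} c_0(\dim T)\, \diam_{d_T}(T). \]
Then \ref{qcsc:3} gives $\diam_{d_T}(T) \geq M^{-1} \diam_{d_S}(S)$ for every non-degenerate $T \in S(x)$; for the degenerate case $T = \{x\}$, there are no faces of $T$ not containing $x$, so $\varepsilon(x, T) = +\infty$. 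Since by \ref{qcsc:1} the dimension of any $T \in S(x)$ is at most $M$, taking the infimum over $T$ yields $\varepsilon(x) \geq M^{-2} \min_{1 \leq n \leq M} c_0(n) \cdot \diam_{d_S}(S)$, which is the desired bound.

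The main obstacle, modest but essential, is that $\varepsilon(x)$ is an infimum over every simplex of $S(x)$ and not just over $S$ itself. Thus a lower bound on $\varepsilon(x, S)$ alone is insufficient; one must invoke \ref{qcsc:3} to propagate the diameter comparison uniformly across every non-degenerate simplex containing $x$, while treating $0$-simplices separately via the convention that the distance to the empty set is $+\infty$.
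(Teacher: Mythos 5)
Your proof is correct and follows essentially the same route as the paper: the first inequality by contradiction via \eqref{simplicial:basic} and the definition of $\varepsilon(x,S)$, and the last via the height of the standard simplex pulled back through \ref{qcsc:2} and propagated across $S(x)$ by \ref{qcsc:3}. The only cosmetic difference is that the paper uses the uniform bound $\sqrt{(n+1)/(2n)}\geq\sqrt{2}/2$ for the height instead of your $\min_{1\leq n\leq M}c_0(n)$; both yield a constant depending only on $M$.
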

\begin{proof}
Let $x,y\in X$ be distinct vertices. We show the first inequality. If $d_X(x,y)<\varepsilon(x)$, then, by \eqref{simplicial:basic}, $y\in B_{d_X}(x,\varepsilon(x))\subset \bigcup S(x)$, so $y\in S$ for some $S\in S(x)$, and $d_X(x,y)=d_S(x,y)$. By the definition of $\varepsilon(x,S)$, we have $d_S(x,y)\geq \varepsilon(x,S)\geq \varepsilon(x)$. This is a contradiction.

Next, observe that the height of the standard $n$-simplex is $\sqrt{(n+1)/(2n)}\geq \sqrt{2}/2$, $n\in \N$. Thus the distance of each vertex of the standard simplex to the faces that do not contain it is at least $\sqrt{2}/2$. This implies that if $X$ is $M$-quasiconformal and $x$ is a vertex, then by condition \ref{qcsc:2}, for each $S\in S(x)$ we have 
$$\varepsilon(x,S)\geq M^{-1}\frac{\sqrt{2}}{2}\diam_{d_S}(S).$$
If $T\in S(x)$ is any other simplex, then by the above and \ref{qcsc:3} we have
$$\varepsilon (x,T) \geq M^{-2}\frac{\sqrt{2}}{2}\diam_{d_S}(S).$$
Thus, $\varepsilon (x)\geq c(M) \diam_{d_S}(S)$. The remaining inequalities in the statement of the lemma are trivial.
\end{proof}

Let $X$ be a simplicial complex and $A\subset X$ be a set. Define $S(A)$ to be the collection of all simplices of $X$ that intersect $A$. 

\begin{prop}\label{proposition:simplicial:distance}
Let $M\geq 1$ and $X$ be an $M$-quasiconformal simplicial complex.  If $A$ is an $n$-simplex of $X$, where $n\in \N$, then for $r=c(M)\diam_{d_S}(A)$, we have
$$N_{d_X}( A,r) \subset \inter\left(\bigcup S(A)\right).$$
\end{prop}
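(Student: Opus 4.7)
The plan is to prove the contrapositive: if $y$ lies in some simplex $T$ with $T\cap A=\emptyset$ (equivalently $T\notin S(A)$), then $d_X(y,A)\geq c(M)\diam_{d_S}(A)$. First I would record the topological characterization that $y\in\inter(\bigcup S(A))$ if and only if every simplex of $X$ containing $y$ belongs to $S(A)$; the forward direction follows from \eqref{simplicial:basic}, while the reverse follows from the observation that the relative interior of any simplex $T\notin S(A)$ containing $y$ consists of points that belong to no simplex of $S(A)$. In particular $A\subset\inter(\bigcup S(A))$, since any simplex meeting $A$ shares a vertex with $A$ and hence lies in $S(A)$.

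Given such a bad $y\in T$, I would consider a near-optimal string $\Sigma=(z_0=y,z_1,\dots,z_m)$ with $z_m\in A$ and $\ell(\Sigma)$ arbitrarily close to $d_X(y,A)$. Since $z_0\notin\inter(\bigcup S(A))$ while $z_m\in\inter(\bigcup S(A))$, let $k\geq 1$ be the smallest index with $z_k\in\inter(\bigcup S(A))$. Then the simplex $S(k)$ containing the segment $z_{k-1}z_k$ must belong to $S(A)$, since it contains $z_k$ which lies in the interior. On the other hand, $z_{k-1}\notin\inter(\bigcup S(A))$ forces the existence of a simplex $P\ni z_{k-1}$ with $P\notin S(A)$, so $z_{k-1}$ lies on the face $F:=S(k)\cap P$ of $S(k)$, and $F\cap A\subset P\cap A=\emptyset$.

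The quasiconformal geometry then yields the quantitative estimate inside $S(k)$. By \ref{qcsc:3} applied at any common vertex of $S(k)$ and $A$, one has $\diam_{d_{S(k)}}(S(k))\geq c(M)\diam_{d_S}(A)$. Since $F$ and $S(k)\cap A$ are disjoint faces of $S(k)$, and disjoint faces of the standard $n$-simplex (with $n\leq M$) are separated by a positive distance depending only on $n$, property \ref{qcsc:2} gives
\[
d_{S(k)}(z_{k-1},S(k)\cap A)\geq d_{S(k)}(F,S(k)\cap A)\geq c(M)\diam_{d_S}(A).
\]

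The main technical hurdle is to lift this interior estimate to an honest lower bound on $d_X(z_{k-1},A)$, since a priori a path in the ambient complex could leave $\bigcup S(A)$ and re-enter via a shorter route. I expect this to be the most delicate step, and my plan is to exploit the controlled geometry of the finite subcomplex $\bigcup S(A)$: every simplex in $S(A)$ has diameter comparable to $\diam_{d_S}(A)$ by \ref{qcsc:3}, and the total number of such simplices is bounded by a function of $M$ via \ref{qcsc:1}. Any detour must re-enter $\bigcup S(A)$ across another boundary face disjoint from $A$, so an iteration of the face-separation argument precludes a meaningful shortcut. Once the bound $d_X(z_{k-1},A)\geq c(M)\diam_{d_S}(A)$ is in hand, the tail of the string satisfies $\ell(\Sigma)\geq d_X(z_{k-1},z_m)\geq d_X(z_{k-1},A)$, and passing to the infimum over $\Sigma$ yields $d_X(y,A)\geq c(M)\diam_{d_S}(A)$, which is the proposition with $r$ a suitable multiple of $\diam_{d_S}(A)$.
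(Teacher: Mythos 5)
Your overall strategy is genuinely different from the paper's: you argue from the outside in, following a near-optimal string from a bad point $y$ toward $A$, whereas the paper argues from the inside out, proving a uniform lower bound $\varepsilon(x)\geq c(M)\min\{\varepsilon(x_i)\}\gtrsim_M \diam_{d_S}(A)$ for \emph{every} point $x$ of the simplex $A$ (first for edges, via the scaling/height computation in a normalized simplex, then by induction on dimension), and then invoking \eqref{simplicial:basic}, i.e.\ $B_{d_X}(x,\varepsilon(x))\subset\bigcup S(x)\subset\bigcup S(A)$. That inclusion is precisely the tool that converts local simplex geometry into a lower bound for the \emph{global} intrinsic metric $d_X$, and it is the tool your argument is missing.

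Concretely, the step you flag as ``the most delicate'' --- upgrading $d_{S(k)}(z_{k-1},S(k)\cap A)\gtrsim_M\diam_{d_S}(A)$ to $d_X(z_{k-1},A)\gtrsim_M\diam_{d_S}(A)$ --- is not a technical hurdle but the entire content of the proposition, and your sketch for it does not close. A competing string from $z_{k-1}$ to $A$ need not ever leave $\bigcup S(A)$: it can weave through many simplices of $S(A)$ (whose number is bounded, but a bound on the number of simplices does not bound the length of a path from below), so ``iterating the face-separation argument over re-entries'' does not address the generic case. There are also two smaller but real errors earlier in the argument. First, your characterization of $\inter(\bigcup S(A))$ is wrong in one direction: a simplex $T$ with $T\cap A=\emptyset$ can be a proper face of a simplex $P$ with $P\cap A\neq\emptyset$, in which case the relative interior of $T$ can lie in $\inter(\bigcup S(A))$ even though $T\notin S(A)$. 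Second, and relatedly, $z_k\in\inter(\bigcup S(A))$ does not imply $S(k)\in S(A)$ (only that $S(k)\subset\bigcup S(A)$), so $S(k)\cap A$ may be empty and the appeal to \ref{qcsc:3} at ``a common vertex of $S(k)$ and $A$'' is unfounded. To repair the proof you should, as in the paper, center the estimate at points $x\in A$: bound $\varepsilon(x)$ from below uniformly over $x\in A$ and let \eqref{simplicial:basic} do the global work, reducing to the one-dimensional case by induction over the faces of $A$.
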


\begin{proof}
We will show by induction that if $A$ is an $n$-simplex in $X$, $n\in \N$, then 
$$N_{d_X}(A,r)\subset \bigcup S(A)$$
where $r=C(M) \min\{\varepsilon (x): \text{$x$ is a vertex of $A$}\}$ and $C(M)=M^{-2}\sqrt{2}/4$. By Lemma \ref{lemma:simplicial:lower}, $r$ is comparable to $\diam_{d_S}(A)$. The desired conclusion follows upon observing that $N_{d_X}(A,r)$ is an open set. 

For $n=1$, consider an edge $A=[x_1,x_2]$ of $X$.  Let  $c_1=\min \{\varepsilon (x_1),\varepsilon(x_2)\}$ and observe that $B_{d_X}(x_i,c_1)\subset \bigcup S(x_i)$ for $i=1,2$ by \eqref{simplicial:basic}.  Let $x\in A$ and consider two cases.
First, suppose that $x\in B_{d_X}(x_i,c_1/2)$ for some $i=1,2$. Then 
\begin{align}\label{proposition:simplicial:distance:1}
B_{d_X}(x,c_1/2)\subset B_{d_X}(x_i,c_1)\subset \bigcup S(x_i)\subset \bigcup S(A).
\end{align}
Next, suppose that $x\in A\setminus (B_{d_X}(x_1,c_1/2)\cup B_{d_X}(x_2,c_1/2))$. Let $S\in S(x)$ be a $k$-simplex of $X$, $k\in \N$. If $S=A$, the length within $A$ from $x$ to each endpoint of $A$ is at least $c_1/2$, so $\varepsilon (x, S)\geq c_1/2$. Next, suppose that $S\neq A$.  We map $S$ to the standard $k$-simplex with a map $\tau$ satisfying condition \ref{qcsc:2}. We will estimate $\varepsilon(\tau(x),\tau(S))$ from below. Let $T$ be a face of $\tau(S)$ such that $\varepsilon(\tau(x),\tau(S))=\dist(\tau(x),T)$. A simple calculation shows that $T$ has a common vertex $z$ with the edge $\tau(A)$. 
%Let $S$ be the convex hull of the unit vectors $e_1,\dots,e_{n+1}$ in $\R^{n+1}$. Then $S$ is the standard simplex scaled by $\sqrt{2}$. One can calculate distances from points $x=(1-t)e_1+te_2$ and the span of a subset of $e_1,\dots,e_{n+1}$. The minimal distance is attained if the subset contains $e_1$ or $e_2$. 
We consider a scaling $\phi(y)= z+R\cdot (y-z)$ so that the segment $[z,\tau(x)]$ is an edge of the simplex  $\phi(\tau(S))$; see Figure \ref{figure:edge}. The edge length of that simplex is at least $ M^{-1} (c_1/2) \diam_{d_S}(S)^{-1}$. The distance $\dist(\tau(x),T)$ is the height of $\phi(\tau(S))$ at the vertex $\tau(x)$. Given that the height of the standard simplex is bounded below by $\sqrt{2}/2$, we obtain
$$\varepsilon(\tau(x),\tau(S)) \geq (\sqrt{2}/2) M^{-1} (c_1/2) \diam_{d_S}(S)^{-1}.$$
Therefore, $\varepsilon(x,S)\geq  M^{-2} (\sqrt{2}/4)c_1\eqqcolon c_2$. We conclude that $\varepsilon(x)\geq \min\{c_1/2,c_2\}=c_2$. Therefore, by \eqref{simplicial:basic},
\begin{align}\label{proposition:simplicial:distance:2}
B_{d_X}(x,c_2)\subset B_{d_X}(x,\varepsilon(x))\subset \bigcup S(x)\subset \bigcup S(A).
\end{align}
Combining \eqref{proposition:simplicial:distance:1} and \eqref{proposition:simplicial:distance:2}, and given that $c_2\leq c_1/2$, we obtain 
$$N_{d_X}(A,c_2)\subset \bigcup S(A).$$
This completes the proof in the case that $n=1$. 

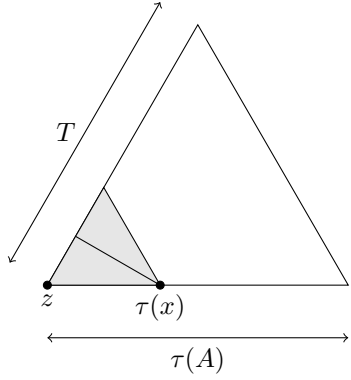
\begin{figure}
\centering
\begin{tikzpicture}
	\draw (0,0)-- (4,0) -- (60:4)--(0,0);
	\coordinate (x) at (1.5,0);
	\coordinate (z) at (0,0);
	\draw[yshift=.3cm,xshift=-.5cm,<->] (0,0)--(60:4) node[pos=0.5,left]{$T$};
	\draw[yshift=-.7cm,<->] (0,0)--(4,0) node[pos=0.5, below]{$\tau(A)$};
	\draw[fill=black!10] (z)--(x)--(60:1.5)--cycle;
	\draw[fill=black] (x) circle (1.5pt) node[below] {$\tau(x)$};
	\draw[fill=black] (z) circle (1.5pt) node[below] {$z$};
	\draw (x)--(60:0.75);
\end{tikzpicture}
\caption{Shown is the simplex $\tau(S)$ and the shaded simplex represents $\phi(\tau(S))$.}\label{figure:edge}
\end{figure}

For the general case we argue by induction. Suppose that the statement is true for $n$-simplices, $n\in \N$. Let $A$ be an $(n+1)$-simplex and let $T_i$, $i\in \{0,\dots,n+1\}$, be $n$-simplices that are faces of $A$. By the induction assumption, we have
$$N_{d_X}(T_i,r_i) \subset \bigcup S(T_i),$$
where $r_i=C(M) \min\{\varepsilon (x): \text{$x$ is a vertex of $T_i$}\}$. Then observe that $S(A)=\bigcup_{i=0}^{n+1} S(T_i)$ and $r_i\geq r\coloneqq C(M) \min\{\varepsilon (x): \text{$x$ is a vertex of $A$}\}$ for $i\in \{0,\dots,n+1\}$, so
$$N_{d_X}(A,r) \subset A\cup \bigcup_{i=0}^{n+1}N_{d_X}(T_i,r_i) \subset \bigcup S(A).$$
This completes the proof. 
\end{proof}

\begin{lemma}\label{lemma:simplicial:approximation}
Let $M\geq 3$ and $X$ be an $M$-quasiconformal simplicial complex. Let $(V,\sim)$ be a graph that corresponds to the $1$-skeleton of $X$ under an embedding $\mathfrak{p}\colon V\to X$, and define $\mathfrak{r}(v)=\varepsilon (\mathfrak{p}(v))$ and $\mathcal U(v)= \inter(\bigcup S(\mathfrak{p}(v)))$ for $v\in V$. Then $\mathcal A=((V,\sim), \mathfrak{p},\mathfrak{r},\mathcal U)$ is an $(M,M')$-approximation of $X$, where $M'=C(M)$.
\end{lemma}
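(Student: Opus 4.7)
The plan is to verify conditions \ref{a:1}--\ref{a:4} with $K = M$ and $L = M' = C(M)$ chosen sufficiently large. I will repeatedly use the following subclaim: for any point $y \in X$ and any vertex $\bar w$ of the minimal simplex $S^*(y)$ containing $y$, one has $y \in U_{\mathfrak{p}^{-1}(\bar w)}$. This holds because every simplex containing $y$ contains $S^*(y)$ and therefore contains $\bar w$, so by \eqref{simplicial:basic} (applicable since $X$ is locally finite by \ref{qcsc:1}) the ball $B(y, \varepsilon(y))$ lies in $\bigcup S(y) \subset \bigcup S(\bar w) = \overline{U_{\mathfrak{p}^{-1}(\bar w)}}$, placing $y$ in $\inter(\overline{U_{\mathfrak{p}^{-1}(\bar w)}}) = U_{\mathfrak{p}^{-1}(\bar w)}$.

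Conditions \ref{a:1}--\ref{a:3} are then routine. The valence of $v \in V$ equals the number of $1$-simplices of $X$ at $p_v$, which is at most $\#S(p_v) \leq M$. For \ref{a:2}, the lower bound is immediate from \eqref{simplicial:basic}, and the upper bound follows from Lemma \ref{lemma:simplicial:lower}, which gives $\diam_{d_T}(T) \leq c(M) r_v$ for each $T \in S(p_v)$, so that $\overline{U_v} \subset \overline{B}(p_v, c(M) r_v)$. For the forward direction of \ref{a:3}, when $u \sim v$ the midpoint of $[p_u, p_v]$ has minimal simplex equal to the entire edge, so the subclaim places it in both $U_u$ and $U_v$; the bound $r_u \simeq_M r_v$ follows from Lemma \ref{lemma:simplicial:lower}. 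Conversely, if $x \in U_u \cap U_v$ then $x$ belongs to some $T_u \ni p_u$ and some $T_v \ni p_v$, and either $T_u = T_v$ (giving $u \sim v$) or any vertex $w$ of the non-empty common face $T_u \cap T_v$ is adjacent in the graph to both $u$ and $v$, giving $k(u,v) \leq 2 < M$ since $M \geq 3$.

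The content lies in \ref{a:4}, and this is the step where the machinery of Proposition \ref{proposition:simplicial:distance} is essential. For each positive-dimensional simplex $T \in S(p_v)$, Lemma \ref{lemma:simplicial:lower} yields $\diam_{d_T}(T) \simeq_M r_v$, so the proposition gives $N(T, C_0(M) r_v) \subset \bigcup S(T)$ for a suitable $C_0(M) > 0$. Unioning over all such $T$ and using $\overline{U_v} = \bigcup_T T$ produces
\[
N(\overline{U_v}, C_0(M) r_v) \subset \bigcup_{T \in S(p_v)} \bigcup S(T).
\]
Choosing $M'$ large enough that $1/M' \leq C_0(M)$ (and large enough for the constants in \ref{a:2}--\ref{a:3}), every $y \in N(U_v, r_v/M')$ lies in some simplex $T''$ that meets a simplex $T \in S(p_v)$. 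A vertex $w$ of the non-empty common face $T \cap T''$ is a vertex of $T$, so $w \sim v$ or $w = p_v$, while any vertex $\bar w$ of $S^*(y) \subset T''$ is a vertex of $T''$, so $\bar w \sim w$ or $\bar w = w$. Hence $k(\mathfrak{p}^{-1}(\bar w), v) \leq 2 < M$, and the subclaim puts $y$ in $U_{\mathfrak{p}^{-1}(\bar w)} \subset \st_M(v)$, establishing \ref{a:4}.
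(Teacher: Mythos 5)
Your proof is correct and follows essentially the same route as the paper: conditions \ref{a:1}--\ref{a:3} are verified directly from \eqref{simplicial:basic} and Lemma \ref{lemma:simplicial:lower}, and \ref{a:4} is reduced to Proposition \ref{proposition:simplicial:distance} applied to the simplices in $S(p_v)$, yielding a star of combinatorial radius at most $3\leq M$. The only cosmetic point is in your subclaim: rather than writing $\inter(\overline{U_{\mathfrak p^{-1}(\bar w)}})=U_{\mathfrak p^{-1}(\bar w)}$, note directly that $B(y,\varepsilon(y))\subset\bigcup S(y)\subset\bigcup S(\bar w)$ places $y$ in $\inter(\bigcup S(\bar w))$, which is $U_{\mathfrak p^{-1}(\bar w)}$ by definition.
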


Recall the definition of an approximation and the relevant notation from Section \ref{section:approximation}.

\begin{proof}
Property \ref{a:1} follows from condition \ref{qcsc:1} in the definition of a quasiconformal simplicial complex. For \ref{a:2} we have 
$$B_{d_X}(p_v,\varepsilon (p_v)) \subset U_v$$
by \eqref{simplicial:basic}. Also, by Lemma \ref{lemma:simplicial:lower} we have $\diam_{d_X}(U_v)\simeq_M \varepsilon(p_v)$. This completes the proof of \ref{a:2}. 

For \ref{a:3}, if $u\sim v$, then the interior of an edge connecting $p_u$ and $p_v$ lies in $U_u\cap U_v$. Moreover, $r_u\simeq_M r_v$ by Lemma \ref{lemma:simplicial:lower}. Conversely, if $U_u\cap U_v\neq \emptyset$, then there exists a simplex $S$ of $X$ whose interior is contained in both $U_u$ and $U_v$. We conclude that $S\in S(p_u)$ and $S\in S(p_v)$.  Thus, $p_u$ and $p_v$ are vertices of  $S$ and $k(u,v)\leq 1<M$.

Finally, we show \ref{a:4}. Note that if $S$ is a simplex of $X$ with vertices $u_0,\dots,u_n$, then 
$$S \subset \bigcup_{i=0}^n U_{u_i}.$$
Let $v\in V$ and $A\in S(p_v)$ be an $n$-simplex, $n\in \N$. Then $\diam_{d_S}(A) \simeq_M \varepsilon (p_v)$ by Lemma \ref{lemma:simplicial:lower}. By Proposition \ref{proposition:simplicial:distance} and the above, we have
\begin{align*}
N_{d_X}(A, r)&\subset \bigcup S(A)\subset  \bigcup \{U_u: \text{$u$ is a vertex of some $S\in S(A)$}\}\\
&\subset \st_{3}(v)=\bigcup\{U_u:u\in V,\, k(u,v)<3\}
\end{align*}
for $r=C(M)\diam_{d_S}(A)\simeq_M \varepsilon (p_v) \simeq_M r_v$. Thus,
$$N_{d_X}(U_v, r)\subset \st_{3}(v)$$
for some $r\simeq_M r_v$. Since $M\geq 3$, this completes the proof of \ref{a:4}.
\end{proof}

\begin{lemma}\label{lemma:simplicial:approximation:isomorphism}
Let $L\geq 1$, $M\geq 3$, and $X,Y$ be $M$-quasiconformal simplicial complexes. Let $f\colon X\to Y$ be a simplicial isomorphism. Let $\mathcal A=((V,\sim),\mathfrak{p},\mathfrak{r},\mathcal U)$ and $\mathcal A'=((V,\sim),\mathfrak{p}',\mathfrak{r}',\mathcal U')$ be the $(M,M')$-approximations of $X$ and $Y$, respectively, that are provided by Lemma \ref{lemma:simplicial:approximation}.  Then for each $v\in V$ and $x,y\in \mathcal A$-$\st_L(v)$ we have
\begin{align*}
C(L,M)^{-1}\frac{d_X(x,y)}{r_v}\leq \frac{d_Y(f(x),f(y))}{r_v'}\leq C(L,M) \frac{d_X(x,y)}{r_v}.
\end{align*}
In particular, $f|_{\mathcal A\text{-}\st_L(v)}$ is $\eta$-quasisymmetric, where $\eta(t)=C(L,M)^2t$.
\end{lemma}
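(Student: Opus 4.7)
The plan exploits two structural facts: since $f$ is a simplicial isomorphism, on each simplex $S$ the restriction $f|_S$ is an affine bijection onto $f(S)$ permuting vertices; and by the quasiconformal condition \ref{qcsc:2}, each simplex is $M$-bi-Lipschitz to the standard simplex after rescaling. The strategy is to first establish a per-simplex bi-Lipschitz estimate with scaling factor $\lambda_S := \diam_{d_{f(S)}}(f(S))/\diam_{d_S}(S)$, then control $\lambda_S$ near $v$, and finally globalize to the $L$-star via an intrinsic-metric comparison.

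For the per-simplex estimate, let $\tau_S\colon S \to \Delta$ and $\tau_{f(S)}\colon f(S) \to \Delta$ be the linearizing maps provided by \ref{qcsc:2}. Since $f$ is a simplicial isomorphism, the composition $\tau_{f(S)} \circ f|_S \circ \tau_S^{-1}$ is an affine bijection of the standard simplex $\Delta$ permuting vertices, hence an isometry of $\Delta$. Combining this with the bi-Lipschitz bounds of \ref{qcsc:2} for $\tau_S$ and $\tau_{f(S)}$ yields
$$d_{f(S)}(f(x), f(y)) \simeq_{M} \lambda_S\, d_S(x,y) \quad \text{for all } x, y \in S.$$
The next step is to show that whenever $S$ has a vertex $p_u$ with $k(u, v) \leq L$, then $\lambda_S \simeq_{L,M} r_v'/r_v$. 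Indeed, Lemma \ref{lemma:simplicial:lower} gives $\diam_{d_S}(S) \simeq_M r_u$ and $\diam_{d_{f(S)}}(f(S)) \simeq_M r_u'$, while iterating property \ref{a:3} along a combinatorial path from $v$ to $u$ of length at most $L$ (using the approximation constant $M' = C(M)$) gives $r_u \simeq_{L,M} r_v$ and $r_u' \simeq_{L,M} r_v'$.

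The main technical step is to upgrade the per-simplex estimate to a bi-Lipschitz bound on $\st_L(v)$. Let $N$ be the subcomplex consisting of all simplices whose vertices all lie at combinatorial distance at most $L+M$ from $v$. By \ref{qcsc:1} and \ref{a:1}, $N$ is a finite union of simplices; it contains $\st_L(v)$ (since the carrier simplex of any $x \in U_u$ with $k(u,v) < L$ has all vertices at combinatorial distance $\leq L$ from $v$) and has $d_X$-diameter $\simeq_{L,M} r_v$. I claim that $d_N(x, y) \leq C(L, M)\, d_X(x, y)$ for all $x, y \in \st_L(v)$, where $d_N$ denotes the intrinsic metric of $N$. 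The argument splits into two regimes: if $d_X(x, y) \geq c(L, M)\, r_v$, the bound follows trivially from $d_N(x, y) \leq \diam_{d_X}(N) \simeq_{L,M} r_v$; if $d_X(x, y) < c(L, M)\, r_v$ with $c(L,M)$ chosen small, then any string $\Sigma$ from $x$ to $y$ with $\ell(\Sigma) \leq 2 d_X(x, y)$ has trace $|\Sigma|$ contained in $B_{d_X}(x, 2c(L,M)\,r_v)$. Since $x \in U_u$ for some $u$ with $k(u, v) < L$, property \ref{a:4} of the approximation from Lemma \ref{lemma:simplicial:approximation} yields $B_{d_X}(x, r_u/M') \subset N_{d_X}(U_u, r_u/M') \subset \st_M(u) \subset \st_{L+M}(v) \subset N$. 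Choosing $c(L,M)$ small enough that $2c(L,M)\, r_v < r_u/M'$ (possible since $r_u \simeq_{L,M} r_v$) forces $|\Sigma| \subset N$, so $d_N(x, y) \leq \ell(\Sigma) \leq 2 d_X(x, y)$.

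Having established the intrinsic-metric comparison, the upper bound $d_Y(f(x), f(y)) \leq C(L, M)(r_v'/r_v)d_X(x, y)$ follows by applying $f$ to a string in $N$ of length close to $d_N(x, y)$ and combining the per-simplex estimate simplex-by-simplex with the uniform control of $\lambda_S$ from Step 2. The matching lower bound follows by symmetry, applying the same argument to the simplicial isomorphism $f^{-1}\colon Y \to X$ between $M$-quasiconformal simplicial complexes. The quasisymmetric conclusion with $\eta(t) = C(L, M)^2\, t$ is then immediate from the two-sided bi-Lipschitz bound on ratios of distances. I expect the main obstacle to be the intrinsic-metric comparison in the third step, since a priori a short string between two nearby points of $\st_L(v)$ could wander far from $v$; ruling this out requires the combined force of \ref{a:4} and the rigidity imposed by the quasiconformal simplicial structure.
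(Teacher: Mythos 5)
Your proposal is correct and follows essentially the same route as the paper: the same per-simplex argument (the conjugated map is a linear automorphism of the standard simplex, hence an isometry), the same use of Lemma \ref{lemma:simplicial:lower}, \ref{a:3}, and \ref{a:4} to control simplex diameters near $v$ and to confine short strings, and the same symmetry trick of applying the upper bound to $f^{-1}$ for the lower bound. The paper merely packages the string analysis differently---splitting on whether the trace of a given string leaves $\st_M(u)$ rather than factoring through an intrinsic-metric comparison on a subcomplex $N$---and your one cosmetic slip (bounding $d_N(x,y)$ by $\diam_{d_X}(N)$ rather than by the intrinsic diameter of $N$, which is what is actually $\lesssim_{L,M} r_v$ and requires the chain-connectedness of $N$) is harmless.
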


\begin{proof}
We use the notation $x'=f(x)$ for points $x\in X$ and analogous notation for images of simplices and strings. Let $S\subset X$ be an $n$-simplex, $n\in \N$, and let $S'=f(S)$. We claim that 
\begin{align}\label{lemma:simplicial:approximation:simplex}
M^{-2}\frac{d_S(x,y)}{\diam_{d_S}(S)} \leq \frac{d_{S'}(x',y')}{\diam_{d_{S'}}(S')}\leq M^{2}\frac{d_S(x,y)}{\diam_{d_S}(S)}. 
\end{align}
To see this, note that by \ref{qcsc:2} there exist linear maps $\tau,\tau'$ from $S,S'$, respectively, onto the standard $n$-simplex in $\R^n$ satisfying the inequality stated in \ref{qcsc:2}.  The composition $\tau' \circ f\circ \tau^{-1}$ is a linear automorphism of the standard $n$-simplex, so it is an isometry. This suffices for \eqref{lemma:simplicial:approximation:simplex}.

To prove the inequalities in the statement of the lemma, it suffices to show the right inequality and then apply it to $f^{-1}$ to obtain the left one. Suppose that $x,y\in \Ast_L(v)$ for some $v\in V$ and $x\neq y$. Then there exists $u\in V$ with $k(u,v)<L$ such that  $x\in U_u$. By \ref{a:3} we have 
\begin{align}\label{lemma:simplicial:approximation:isomorphism:radius}
r_u\simeq_{L,M} r_v
\end{align}
Let $\Sigma=(z_0,z_1,\dots,z_m)$ be a string in $X$ connecting $x$ and $y$, where $z_0=x$ and $z_m=y$. Also, since $x\neq y$, we can assume that there exist non-degenerate simplices $S(i)\subset X$, $i\in \{1,\dots,m\}$, such that $z_{i-1},z_i\in S(i)$ for $i\in \{1,\dots,m\}$. We consider two cases.

\smallskip
\noindent
\textit{Case 1:} $|\Sigma|$ is not contained in $\Ast_M(u)$. Since $|\Sigma|$ intersects $U_u$, by \ref{a:4} we have $\ell(\Sigma)\gtrsim_M r_u$.  On the other hand, $x',y'\in \A'st_L(v)$, so $d_Y(x',y') \lesssim_{L,M} r_v'$ by \ref{a:2} and \ref{a:3}. Altogether, combining the above with \eqref{lemma:simplicial:approximation:isomorphism:radius}, we obtain 
$$\frac{d_Y(f(x),f(y))}{r_v'}\lesssim_{L,M} 1\lesssim_{L,M}\frac{\ell(\Sigma)}{r_v}.$$

\smallskip
\noindent
\textit{Case 2:} $|\Sigma|$ is contained in $\Ast_M(u)$. Note that $\Ast_M(u)$ is contained in the union of simplices one vertex of which has combinatorial distance less than $M$ from $u$. Thus, for each $i\in \{1,\dots,m\}$ the simplex $S(i)$ intersects one of those simplices. In other words, there exists $v_i\in V$ with $k(u,v_i)<M$ such that $S(i)$ intersects a simplex $T(i)$ that has $v_i$ as a vertex. By Lemma \ref{lemma:simplicial:lower}, $r_{v_i}=\varepsilon(p_{v_i})\simeq_M \diam_{d_{T(i)}}(T(i))$. By \ref{qcsc:3}, we have $\diam_{d_{T(i)}}(T(i))\simeq_M \diam_{d_{S(i)}}(S(i))$. Altogether, $\diam_{d_{S(i)}}(S(i))\simeq_M r_{v_i}$. Also, $r_u\simeq_{M} r_{v_i}$ by \ref{a:3} and $r_v\simeq_{L,M} r_{v_i}$ by \eqref{lemma:simplicial:approximation:isomorphism:radius}. The analogous statements are true for the images under $f$. By the above and  \eqref{lemma:simplicial:approximation:simplex}, we have
\begin{align*}
\frac{d_Y(x',y')}{r_v'} &\leq \frac{\ell(\Sigma')}{r_v'} \simeq_{L,M}
\sum_{i=1}^m \frac{d_{S'(i)} (z_{i-1}',z_i')}{\diam_{d_{S'(i)}}(S'(i))}\\
&\simeq_{L,M} \sum_{i=1}^m \frac{d_{S(i)} (z_{i-1},z_i)}{\diam_{d_{S(i)}}(S(i))}
\simeq_{L,M} \frac{\ell(\Sigma)}{r_v}.
\end{align*}
Combining both cases and infimizing over all strings $\Sigma$ connecting $x$ and $y$, we obtain the conclusion.
\end{proof}

\subsection{Constructions}
We include an elementary construction of a quasiconformal simplicial complex whose boundary is a triangle that has prescribed side lengths.

\begin{lemma}\label{lemma:triangle}
Let $d_1,d_2,d_3$ be positive numbers such that $d_i\leq d_j+d_k$ for all distinct $i,j,k\in \{1,2,3\}$. Let $M\geq 1$ such that
$$M^{-1}\leq \frac{d_i}{d_j}\leq M$$
for each $i,j\in \{1,2,3\}$. Let $S$ be the standard $2$-simplex with edges $e_1,e_2,e_3$. Then there exists a $C(M)$-quasiconformal simplicial complex $K$ consisting of three $2$-simplices and a simplicial complex $S'$ that arises by connecting the barycenter of $S$ to the vertices with the following properties: there exists a simplicial isomorphism $f\colon S'\to K$ that maps the edge $e_i$ of $S'$ to an edge of $K$ of length $d_i$ for $i\in \{1,2,3\}$.
\end{lemma}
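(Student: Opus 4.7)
My plan is to build $K$ by an explicit gluing of three isoceles Euclidean triangles, rather than by trying to realize a single triangle with sides $d_1,d_2,d_3$ (which could be degenerate and would in any case only produce one $2$-simplex, not three). Set $A:=\max\{d_1,d_2,d_3\}$; by the ratio hypothesis, $A/M\leq d_i\leq A$ for every $i$. For each $i\in\{1,2,3\}$, take an isoceles Euclidean $2$-simplex $T_i$ with one edge of length $d_i$ and two edges of length $A$; this is a non-degenerate triangle since $d_i\leq A<2A$. Label the vertices of $T_i$ as $v_j,v_k,p$ (with $\{j,k\}=\{1,2,3\}\setminus\{i\}$) so that $[v_j,v_k]$ is the short edge of length $d_i$ and $[p,v_j],[p,v_k]$ are the long edges of length $A$. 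Glue the three triangles by identifying the four copies of each vertex label, which is consistent on each shared edge $[p,v_\ell]$ because that edge has length $A$ in both simplices containing it. The resulting abstract simplicial complex is $K$. By design, it is combinatorially isomorphic to $S'$: both have four vertices, three $2$-simplices arranged around a single central vertex, and three boundary edges. So the simplicial isomorphism $f\colon S'\to K$ sending the barycenter to $p$ and each vertex of $S$ to the corresponding $v_\ell$ is forced, and $f$ maps $e_i$ (the edge of $S$ opposite to $v_i'$) onto $[v_j,v_k]$, an edge of $K$ of length $d_i$.

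The remaining task is to check that $K$ is $C(M)$-quasiconformal. Property \ref{qcsc:1} is trivial since $K$ consists of only three $2$-simplices, so $\#S(x)\leq 3$ everywhere. Property \ref{qcsc:3} is also immediate: every $2$-simplex $T_i$ has $\diam_{d_{T_i}}(T_i)=A$ (because $d_i\leq A$), so diameters of any two simplices meeting at a vertex have ratio exactly $1$; edges and vertices in $S(x)$ contribute only constants and are comparable to $A$ in diameter as well. Property \ref{qcsc:2} is the one substantive calculation: place $T_i$ in the plane with vertices $(0,0)$, $(d_i,0)$, $(d_i/2, h_i)$ where $h_i=\sqrt{A^2-d_i^2/4}$, and take $\tau_i$ to be the affine map onto the standard equilateral triangle, which in these coordinates is diagonal scaling with factors $1/d_i$ and $\sqrt{3}/(2h_i)$. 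Since $A/M\leq d_i\leq A$ and $A\sqrt{3}/2\leq h_i\leq A$, both factors are comparable to $1/A$ up to a factor depending only on $M$, so \ref{qcsc:2} holds with constant $C(M)$ after normalizing by $\diam_{d_{T_i}}(T_i)=A$.

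This is really the only technical step, and I expect no genuine obstacle. The triangle inequality hypothesis on $d_1,d_2,d_3$ is not used in the construction (it is only needed in the geometric settings where this lemma is applied), because we never attempt to realize the $d_i$ as the three sides of a single Euclidean triangle; each individual $T_i$ has a trivially satisfied triangle inequality due to the bounded-ratio hypothesis.
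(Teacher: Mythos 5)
Your construction is correct, but it is genuinely different from the one in the paper. The paper normalizes $d_1=1$, realizes a (possibly degenerate) planar triangle $T$ with side lengths $d_1,d_2,d_3$ --- this is where the hypothesis $d_i\leq d_j+d_k$ is used --- lifts a point $z_0$ to unit height above the barycenter of $T$, and takes the three cones $\widetilde K_i$ over the sides of $T$ from $z_0$; verifying \ref{qcsc:2} then requires an area-versus-angle estimate to show the angles of each $\widetilde K_i$ are bounded below in terms of $M$. Your version replaces this with three isoceles triangles of base $d_i$ and common leg length $A=\max_i d_i$, glued along the legs. This buys two things: the triangle-inequality hypothesis becomes superfluous (as you correctly note --- it is not implied by the ratio condition, so this is a genuine, if minor, weakening of the hypotheses), and the bi-Lipschitz comparison with the standard simplex reduces to an explicit diagonal scaling with both factors comparable to $1/A$ up to constants depending on $M$. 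What the paper's construction buys instead is a concrete realization of $K$ as a ``tent'' in $\R^3$ over the planar triangle when $T$ is non-degenerate (Figure \ref{figure:lemma:triangle}), which is expository rather than essential. Since the later gluing of the complexes $K_S$ in the proof of Theorem \ref{theorem:main:approximation} only uses the lengths $d_i$ of the boundary edges and the uniform quasiconformality constant, your $K$ serves equally well there; one would only add, as the paper does, that the three triangles should be given consistent orientations so that the subsequent identifications produce an oriented surface.
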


\begin{proof}
By scaling, we may assume that $d_1=1$, so we have $M^{-1}\leq d_i\leq M$ for $i\in \{1,2,3\}$. Consider a possibly degenerate triangle $T$ in the plane with sides $K_1,K_2,K_3$ of length $d_1,d_2,d_3$, respectively. The distance of the barycenter of $T$ to each vertex or side is bounded above by $2M/3\leq M$. Consider a point $z_0\in \R^2\times \{1\}$ that projects to the barycenter of $T$. The distance of $z_0$ to each vertex is bounded above by $\sqrt{1+ 4M^2/9}\leq 2M$ and below by $1$.  Denote by $\widetilde K_i$ the convex hull of $K_i$ and $z_0$. Then $\widetilde K_i$ is a triangle of side lengths in the interval $[M^{-1},2M]$. Also, the height of the triangle $\widetilde K_i$ from $z_0$ to the side $K_i$ is bounded below by $1$. If $\theta$ is an angle between two sides of $\widetilde K_i$, then 
$$\frac{1}{2M}\leq \frac{1}{2}d_i\cdot 1\leq \area(\widetilde K_i) \leq \frac{1}{2} (2M)^2\sin \theta.$$ 
This implies that each angle of $\widetilde K_i$ is bounded from below away from $0$, depending on $M$. Thus, there exists a linear and $C(M)$-bi-Lipschitz map from $\widetilde K_i$ onto the standard $2$-simplex.

We now consider a simplicial complex $K$ arising by considering the disjoint union $\widetilde K_1\sqcup \widetilde K_2\sqcup\widetilde K_3$ and identifying, for each pair of distinct $i,j\in \{1,2,3\}$, the edge of $\widetilde K_i$ with the edge of $\widetilde K_j$ that connect the common vertex of $K_i$ and $K_j$ to $z_0$. Note that if $T$ is a non-degenerate triangle, then $K$ is realized as a subset of $\R^3$ obtained by taking the union of $\widetilde K_1,\widetilde K_2,\widetilde K_3$, as subsets of $\R^3$; see Figure \ref{figure:lemma:triangle}. By the above, $K$ is a $C(M)$-quasiconformal simplicial complex. We give consistent orientations to the triangles $\widetilde K_i$, $i\in \{1,2,3\}$.

\begin{figure}
\begin{tikzpicture}
\begin{scope}[shift={(6,0)}]
\coordinate (A) at (4,0,0);
\coordinate (B) at (2,0,1.5);
\coordinate (C) at (2,4,0.3);
\coordinate (O) at (0,0,0);

\fill (O) circle (1.5pt);
\fill (A) circle (1.5pt);
\fill (B) circle (1.5pt);

\draw[dashed] (O) --(A);
\draw  (O) -- (B) -- (A);
\draw[color=black, fill=black!20, fill opacity=0.5] (O)-- (B) -- (C) -- cycle;
\draw (A)--(B)--(C)--cycle;

\fill (C) circle (1.5pt) node[anchor=west] {$z_0$};
\node[anchor=north east] at ($0.5*(O)+0.5*(B)$) {$K_i$};
\node at ($0.33*(O)+0.33*(B)+0.33*(C)$) {$\widetilde K_i$};
\end{scope}

\draw[->] (3.5,2) to[out=30,in=150](5.5,2);
\node at (4.5,2.6) {$f$};

\begin{scope}[shift={(-6,0)}]
\coordinate (a) at (6,0);
\coordinate (b) at (9,0);
\coordinate (c) at (7.5, {3*sqrt(3)*0.5});
\coordinate (d) at ($0.333*(a)+0.333*(b)+0.333*(c)$);

\draw (a) -- (b) -- (c) -- cycle;
\draw(d)--(c);
\draw[color=black, fill=black!10] (a)node[xshift=1.5cm,below]{$e_i$}--(b) --(d)--cycle;
\fill (a) circle (1.5pt);
\fill (b) circle (1.5pt);
\fill (c) circle (1.5pt);
\fill (d) circle (1.5pt);
\end{scope}
\end{tikzpicture}
\caption{The simplicial complex $K$ constructed in Lemma \ref{lemma:triangle}.}\label{figure:lemma:triangle}
\end{figure}
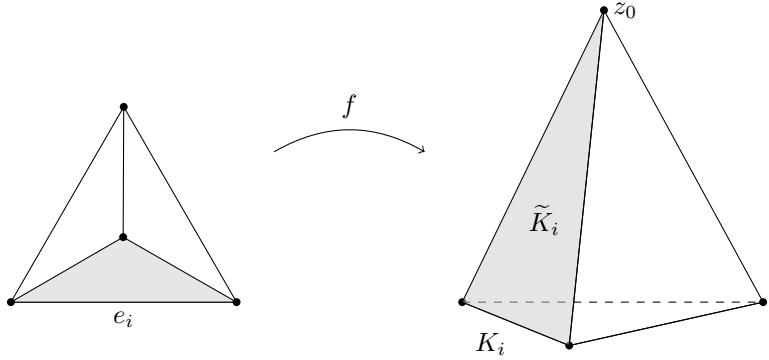

Consider an equilateral triangle $S$ of unit side length with edges $e_1,e_2,e_3$. We subdivide $S$ into three triangles $S_1,S_2,S_3$ by connecting the barycenter to the vertices, so that $e_i\subset S_i$. Thus, we obtain a simplicial complex $S'$ consisting of these three triangles. For $i\in \{1,2,3\}$, consider an orientation-preserving linear map from $S_i$ onto $\widetilde K_i$ so that the edge $e_i$ of $S_i$ is mapped to the edge $K_i$ of $\widetilde K_i$. 
Pasting together these three maps gives rise to the desired simplicial isomorphism from $S'$ onto $K$. 
\end{proof}

\section{Simplicial and smooth approximation of manifolds}\label{section:proof}

This section is devoted to the proofs of Theorem \ref{theorem:main:approximation} and Theorem \ref{theorem:main:approximation:bilip}.

\subsection{Gromov--Hausdorff convergence}
Let $(X,d_X)$ be a metric space and let $E\subset X$ and $\varepsilon>0$. We say that $E$ is \textit{$\varepsilon$-dense} (in $X$) if for each $x\in X$ we have $\dist_{d_X}(x,E)<\varepsilon$ or equivalently $N_{d_X}(E,\varepsilon)=X$. A map $f \colon X \to Y$ (not necessarily continuous) between metric spaces is an \textit{$\varepsilon$-isometry} if $f(X)$ is $\varepsilon$-dense in $Y$ and 
$$|d_X(x,y) - d_Y(f(x),f(y))| < \varepsilon$$
for each $x,y \in X$.

We define the \textit{Hausdorff distance} of two sets $E,F\subset X$ to be the {infimal value} $r>0$ such that $E\subset N_{d_X}(F,r)$ and $F\subset N_{d_X}(E,r)$. We denote the Hausdorff distance by $d_H(E,F)$. A sequence of sets $E_n\subset X$ \textit{converges in the Hausdorff sense} to a set $E\subset X$ if $d_H(E_n,E)\to 0$ as $n\to\infty$.

The \textit{Gromov--Hausdorff distance} between two metric spaces $X,Y$ is defined as the infimal value $r>0$ such that there is a metric space $Z$ with subsets $\widetilde{X}, \widetilde{Y} \subset Z$ such that $X$ and $Y$ are isometric to $\widetilde{X}$ and $\widetilde{Y}$, respectively, and $d_H(\widetilde{X}, \widetilde{Y}) < r$. This is denoted by $d_{GH}(X,Y)$. We note that if there exists an $\varepsilon$-isometry from $X$ to $Y$ for some $\varepsilon>0$, then $d_{GH}(X,Y)<2\varepsilon$ \cite{BuragoBuragoIvanov:metric}*{Corollary 7.3.28}. We say that a sequence of metric spaces $X_n$, $n\in \N$, \textit{converges in the Gromov--Hausdorff sense} to a metric space $X$ if $d_{GH}(X_n,X) \to 0$ as $n \to \infty$. By \cite{BuragoBuragoIvanov:metric}*{Corollary 7.3.28}, this is equivalent to the requirement that there exists a sequence of $\varepsilon_n$-isometries $f_n\colon X_n\to X$, where $\varepsilon_n>0$ and $\varepsilon_n\to 0$ as $n\to\infty$. In this case, we say that $f_n$, $n\in \N$, is an \textit{approximately isometric sequence}. See \cite{BuragoBuragoIvanov:metric}*{Section 7} and  \cite{Petrunin:metric_geometry}*{Section 5} for more background.

\begin{lemma}[\cite{KeithLaakso:Assouaddimension}*{Lemma 2.4.7}]\label{lemma:keithlaakso}
Let $\{(X_n,d_n)\}_{n\in \N}$ and $\{(Y_n,\rho_n)\}_{n\in \N}$ be sequences of compact metric spaces that converge in the Gromov--Hausdorff sense to compact metric spaces $(X,d)$ and $(Y,\rho)$, respectively. Let $f_n\colon X_n\to Y_n$, $n\in \N$, be an $\eta$-quasisymmetric homeomorphism, where $\eta$ is a fixed distortion function. Furthermore, suppose that there exist points $a_n,b_n\in X_n$, $n\in \N$, and $C\geq 1$ such that
\begin{align*}
C^{-1}\leq d_n(a_n,b_n)\leq C\quad \text{and}\quad C^{-1}\leq \rho_n(f_n(a_n),f_n(b_n))\leq C
\end{align*}
for all $n\in \N$. Then the sequence $\{f_n\}_{n\in \N}$ has a subsequence that converges to an $\eta$-quasisymmetric homeomorphism $f\colon X\to Y$.
\end{lemma}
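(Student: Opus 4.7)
The plan is to run an Arzel\`a--Ascoli-type argument transferred through approximately isometric sequences, following the standard Gromov--Hausdorff recipe. Fix $\varepsilon_n$-isometries $\phi_n\colon X_n\to X$ and $\psi_n\colon Y_n\to Y$ with $\varepsilon_n\to 0$, and a countable dense set $D=\{x_k\}_{k\in \N}\subset X$. For each $k$ and $n$, choose $x_{k,n}\in X_n$ with $d_X(\phi_n(x_{k,n}),x_k)<\varepsilon_n$, so that $d_n(x_{k,n},x_{l,n})\to d_X(x_k,x_l)$ as $n\to\infty$ for every pair $k,l\in \N$.

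First I would establish uniform boundedness of $\{\psi_n(f_n(x_{k,n}))\}_n$ in $Y$ for each fixed $k$. By compactness of $X$, the distances $d_n(x_{k,n},a_n)$ are uniformly bounded in $n$, and applying the quasisymmetric inequality
\begin{align*}
\frac{\rho_n(f_n(x_{k,n}),f_n(a_n))}{\rho_n(f_n(a_n),f_n(b_n))}\leq \eta\!\left(\frac{d_n(x_{k,n},a_n)}{d_n(a_n,b_n)}\right),
\end{align*}
together with the normalization $\rho_n(f_n(a_n),f_n(b_n))\leq C$ and $d_n(a_n,b_n)\geq C^{-1}$, yields a uniform bound on $\rho_n(f_n(x_{k,n}),f_n(a_n))$. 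Since $Y$ is compact, a diagonal extraction then produces a subsequence along which $\psi_n(f_n(x_{k,n}))$ converges in $Y$ to a point $f(x_k)$ for every $k$.

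Second, I would pass to the limit in the $\eta$-quasisymmetric inequality evaluated at triples from $D$. Since $\phi_n$ and $\psi_n$ are $\varepsilon_n$-isometries with $\varepsilon_n\to 0$, this yields the same inequality for $f$ on $D$; applying quasisymmetry with the roles of $y$ and $z$ interchanged supplies a positive lower bound for the ratios $d_Y(f(x_k),f(x_l))/d_Y(f(x_k),f(x_m))$ whenever $x_k,x_l,x_m$ are distinct, so $f|_D$ is injective and the limiting inequality remains meaningful. Quasisymmetry encodes a uniform modulus of continuity for $f|_D$ relative to any fixed reference pair with nondegenerate image, so $f$ extends continuously to a map $f\colon X\to Y$, and the $\eta$-inequality persists on all of $X$ by continuity.

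Third, to upgrade $f$ to a homeomorphism I would apply the same argument to the inverse sequence $f_n^{-1}\colon Y_n\to X_n$, which is $\eta'$-quasisymmetric for some $\eta'$ determined by $\eta$, normalized now by the pair $f_n(a_n),f_n(b_n)$. Extracting a further subsequence produces a limit $g\colon Y\to X$ satisfying $g\circ f=\id$ on $D$; continuity upgrades this to $g\circ f=\id_X$ on $X$, and symmetrically $f\circ g=\id_Y$. This also forces $\psi_n\circ f_n\circ\phi_n^{-1}$ to converge to $f$ along the subsequence in the sense required by the lemma. The main obstacle is the extension from $D$ to $X$ while quantitatively preserving the $\eta$-quasisymmetric inequality: the reference distances appearing in denominators of ratios must stay bounded away from zero in the limit. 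This is exactly where the nondegenerate normalization via $a_n,b_n$ together with the compactness of $X$ and $Y$ are indispensable, ruling out a collapse at which the limiting quasisymmetric structure would degenerate.
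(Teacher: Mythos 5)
This lemma is imported from Keith--Laakso (their Lemma 2.4.7); the paper gives no proof and even remarks that it will not make precise the sense in which $f_n\to f$, so there is no in-paper argument to compare against. Your proposal is the standard (and essentially the cited reference's) proof: transfer everything to the limit spaces via $\varepsilon_n$-isometries, extract limits on a countable dense set by compactness and a diagonal argument, pass the $\eta$-inequality to the limit, use the normalization by $a_n,b_n$ to rule out collapse and to get a uniform modulus of continuity for the extension, and run the same argument on $f_n^{-1}$ to obtain the inverse. The logic is sound, and you correctly isolate the two points where the hypotheses are actually used: the normalized pair prevents the image distances in denominators from degenerating, and compactness supplies the subsequential limits. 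Two routine details worth making explicit in a full write-up: you should also pass to a subsequence along which $\phi_n(a_n),\phi_n(b_n)$ and $\psi_n(f_n(a_n)),\psi_n(f_n(b_n))$ converge, so that a genuine nondegenerate reference pair $(a,b)$ with $d(a,b)\geq C^{-1}$ and $\rho(f(a),f(b))\geq C^{-1}$ exists in the limit spaces; and the statement ``$\psi_n\circ f_n\circ\phi_n^{-1}\to f$'' needs the uniform equicontinuity of the $f_n$ (which your quasisymmetry-plus-normalization bound provides) to guarantee that the limit is independent of the choice of approximating points, which is what makes the convergence of maps between varying spaces well defined.
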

The convergence of $\{f_n\}_{n\in \N}$ has to be interpreted appropriately, but we do not go into detail for the sake of brevity.

\begin{proof}[Proof of Theorem \ref{theorem:main:approximation} \ref{theorem:main:approximation:2} $\Rightarrow$ \ref{theorem:main:approximation:1}]
Suppose that \ref{theorem:main:approximation:2} is true. Consider an approximately isometric sequence $f_k\colon (X,d_k)\to (X,d_X)$, $k\in \N$, and a sequence of $\eta$-quasisymmetries $h_k\colon (X,d_g)\to (X,d_k)$, $k\in \N$. By the definition of an approximately isometric sequence, for sufficiently large $k\in\N$ we have 
$$\frac{1}{2}\leq \frac{\diam_{d_k}(X)}{\diam_{d_X}(X)}\leq \frac{3}{2}.$$  
We fix points $a,b\in X$. By Lemma \ref{lemma:qs_heinonen} we have
\begin{align*}
\frac{d_k(h_k(a),h_k(b))}{\diam_{d_k}(X)} \geq \frac{1}{2\eta\left( \frac{\diam d_g(X)}{d_g(a,b)}\right)}.
\end{align*}
Therefore, for sufficiently large $k\in \N$, we have
\begin{align*}
\frac{\diam_{d_X}(X)}{4\eta\left( \frac{\diam d_g(X)}{d_g(a,b)}\right)}\leq d_k(h_k(a),h_k(b)) \leq \frac{3}{2}\diam_{d_X}(X).
\end{align*}
By Lemma \ref{lemma:keithlaakso}, there exists an $\eta$-quasisymmetric homeomorphism $h\colon (X,d_g)\to (X,d_X)$.
\end{proof}

\subsection{Simplicial approximations and smoothing}
We will use two recent results of Bowditch and Cattalani regarding simplicial approximation of Riemannian manifolds and smoothing of simplicial complexes, respectively. 

\begin{theorem}[\cite{Bowditch:triangulation}*{Theorems 1.2 and 1.3}]\label{theorem:bowditch}
Let $X$ be a compact Riemannian $n$-manifold with boundary. Then there exists $\eta_0>0$ such that for each $t\in (0,\eta_0)$ there exists a locally finite simplicial complex $Z$ whose $n$-simplices are copies of the standard $n$-simplex scaled by $t$, and there exists a $c(n)$-bi-Lipschitz homeomorphism $\tau\colon Z \to X$. 
\end{theorem}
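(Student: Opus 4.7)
The plan is to produce the triangulation via a two-step procedure: first construct an approximate triangulation whose simplices are nearly equilateral, and then rigidify it by replacing each simplex with an exact copy of the scaled standard simplex.

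First, I would use compactness of $X$ to extract uniform bounds: a lower bound on the injectivity radius (interior and, via a collar, near the boundary) and uniform two-sided bounds on sectional curvature. These give a scale $\rho>0$ at which every geodesic ball of radius at most $\rho$ is $(1+\delta)$-bi-Lipschitz to a Euclidean ball, with $\delta$ arbitrarily small at the cost of shrinking $\rho$. For any $t<\rho/C(n)$ I would first triangulate $\partial X$ by the $(n-1)$-dimensional version of the theorem (by induction on dimension, with $n=1$ trivial), and build a collar $\partial X\times[0,t]$ whose triangulation is obtained by subdividing the prisms over the boundary simplices into copies of the scaled standard $n$-simplex in a fixed combinatorial pattern.

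Second, on the interior I would take a maximal $t$-separated net $\{p_i\}$ and form a Delaunay-type complex using the exponential charts at each $p_i$; because the metric is nearly Euclidean at scale $t$, the resulting simplices are close to regular Euclidean simplices, provided the net is in sufficiently general position. A controlled perturbation of the net of size much smaller than $t$ (deterministic, using a fixed grid pattern in normal coordinates) ensures that every simplex is non-degenerate and has shape uniformly close, in a sense depending only on $n$, to the standard simplex. This produces an auxiliary simplicial complex $\widetilde Z$ together with a natural bi-Lipschitz map $\widetilde\tau\colon\widetilde Z\to X$ whose distortion depends only on $n$.

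Third, to replace $\widetilde Z$ by a complex $Z$ whose cells are exact copies of $t\cdot S_{\mathrm{std}}$, I would reattach each top-dimensional simplex of $\widetilde Z$ to its neighbors via the same combinatorial face identifications, using an isometric copy of $tS_{\mathrm{std}}$ in place of the original simplex. Since each simplex of $\widetilde Z$ is uniformly bi-Lipschitz to $tS_{\mathrm{std}}$ (in the sense of condition \ref{qcsc:2} of Section \ref{section:simplicial}), the resulting face identifications are bi-Lipschitz, and the intrinsic length metric on $Z$ is bi-Lipschitz equivalent to the pullback metric on $\widetilde Z$ with constant $c(n)$. Composing with $\widetilde\tau$ yields $\tau\colon Z\to X$ with the required properties.

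The main obstacle is the \emph{sliver problem}: for $n\geq 3$ the Delaunay complex of an arbitrary well-spaced point set can contain arbitrarily degenerate simplices, and the usual perturbation arguments do not eliminate them with controls depending only on $n$. Overcoming this requires either a carefully chosen perturbation scheme adapted to the curvature scale, or a direct construction of a nearly regular simplicial subdivision inside each exponential chart that is designed to match along overlaps. The uniformity of $c(n)$ — i.e.\ its independence of the particular manifold $X$ — ultimately rests on the uniformity of the curvature and injectivity-radius bounds provided by compactness, which is precisely why compactness of $X$ appears as an essential hypothesis.
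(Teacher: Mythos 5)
This statement is not proved in the paper at all: it is imported verbatim from Bowditch (Theorems 1.2 and 1.3), with Boissonnat--Dyer--Ghosh cited as related work, so there is no internal argument to compare against. Judged on its own terms, your proposal is a reasonable outline of the standard strategy (uniform curvature and injectivity-radius bounds from compactness, a $t$-net, a Delaunay-type complex in normal coordinates, then rigidification by swapping each cell for an exact copy of $tS_{\mathrm{std}}$), and the final rigidification step is sound: since all replacement faces are regular $(n-1)$-simplices of the same size, the regluings are isometric and the length metric on $Z$ is $c(n)$-bi-Lipschitz to that of $\widetilde Z$.

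However, the proposal has a genuine gap, and it is exactly the one you name yourself: the sliver problem. For $n\geq 3$ a Delaunay complex built on a maximal $t$-separated net can contain simplices of arbitrarily bad shape, and worse, without a thickness (or ``protection'') guarantee the Delaunay complex need not even be an embedded triangulation of the manifold. Producing a deterministic perturbation of size $o(t)$ that makes every simplex uniformly thick with constants depending only on $n$ is the entire technical content of the cited theorems; saying that overcoming the obstacle ``requires either a carefully chosen perturbation scheme \dots or a direct construction'' is a statement of the problem, not a solution. Until that step is supplied, condition \ref{qcsc:2} for $\widetilde Z$ is unjustified and the bi-Lipschitz constant $c(n)$ cannot be extracted. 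A second, more routine gap: you triangulate $\partial X$ by induction and build a prism collar, but you never explain how the interior Delaunay complex is made combinatorially and metrically compatible with the inner face of that collar; some interpolation region with its own shape guarantees is needed there as well.
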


See also the relevant work of Boissonnat--Dyer--Ghosh \cite{BoissonnatDyerGhosh:triangulation}. 

\begin{remark}\label{remark:degree}
The degree of each vertex in the $1$-skeleton of $Z$ is bounded above, depending only on the dimension $n$. Indeed, if a number $m$ of $n$-simplices of $Z$ meet at a vertex $x$ of $Z$, then the ball $B(x,r)$ for small $r$ has volume $C(n)m r^n$. Since $\tau$ is $c(n)$-bi-Lipschitz, the image of the ball $B(x,r)$ under $\tau$ has volume comparable to $m r^n$  and is contained in a ball of radius comparable to $r$ in the Riemannian metric of $X$. If $r$ is small enough, then we see that $m$ has to be uniformly bounded.
\end{remark}

\begin{theorem}[\cite{Cattalani:smoothing}*{Theorem 2}]\label{theorem:cattalani}
Let $M\geq 1$ and $X$ be an $M$-quasiconformal simplicial complex that is homeomorphic to a $2$-manifold. Then there exists a $C(M)$-bi-Lipschitz homeomorphism from $X$ onto a Riemannian manifold. 
\end{theorem}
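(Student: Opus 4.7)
The plan is to construct an explicit smooth Riemannian metric $g$ on the underlying topological $2$-manifold of $X$ whose associated length metric $d_g$ is $C(M)$-bi-Lipschitz equivalent to the simplicial metric $d_X$. The key observation is that $d_X$ is already locally Euclidean away from the $0$-skeleton: in the interior of each $2$-simplex it is flat by definition, and along the interior of each edge shared by exactly two $2$-simplices (as $X$ is a $2$-manifold) one can unfold the two adjacent triangles into a common Euclidean plane, producing a local isometry with an open planar set. Thus the only singularities of $d_X$ are cone singularities at the vertices.

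I would begin by computing, for each vertex $v$, the cone angle $\theta_v=\sum_i \alpha_i$, where the $\alpha_i$ are the angles at $v$ of the triangles meeting at $v$. Using conditions \ref{qcsc:1} and \ref{qcsc:2}, the number of such triangles is at most $M$, and each individual angle is bounded below by a constant depending only on $M$ (each triangle is $M$-bi-Lipschitz to the standard equilateral simplex, whose angles are $\pi/3$). Hence $\theta_v$ lies in a fixed interval $[c_1(M),\,c_2(M)]$ with $c_1(M)>0$.

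The main step would be to resolve each cone singularity in a bi-Lipschitz manner. Fix a vertex $v$ and choose a radius $\rho_v$ smaller than one third of the shortest edge at $v$; the metric ball $B(v,\rho_v)$ is then isometric to the abstract planar cone of opening $\theta_v$ and radius $\rho_v$. The map $\psi_v\colon(\rho,\phi)\mapsto(\rho,\tfrac{2\pi}{\theta_v}\phi)$ sends this cone bijectively onto a flat Euclidean disk of radius $\rho_v$, and a direct calculation comparing the cone metric $d\rho^2+\rho^2\,d\phi^2$ with the pullback $d\rho^2+(2\pi/\theta_v)^2\rho^2\,d\phi^2$ shows that $\psi_v$ is bi-Lipschitz with constant $\max\{\theta_v/(2\pi),\,2\pi/\theta_v\}$, which is uniformly bounded in terms of $M$. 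The candidate Riemannian metric $g$ is then defined by declaring it equal to $\psi_v^*g_{\mathrm{Euc}}$ on $B(v,\rho_v/2)$, equal to the ambient flat simplicial metric outside $\bigcup_v B(v,\rho_v)$, and a smooth convex combination of the two on each annular region $B(v,\rho_v)\setminus B(v,\rho_v/2)$.

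The principal technical obstacle I anticipate is verifying that this interpolation preserves $C(M)$-bi-Lipschitz control. On the annular region both candidate metrics are smooth and flat, and by the calculation above (applied on the annulus, which does not contain the singular vertex) they are pointwise $C(M)$-bi-Lipschitz equivalent; since pointwise bi-Lipschitz equivalence of symmetric positive-definite tensors is preserved by arbitrary smooth convex combinations, the resulting $g$ is everywhere $C(M)$-bi-Lipschitz comparable to the original flat tensor field, whence the $C(M)$-bi-Lipschitz equivalence of the associated path metrics follows by standard length-space arguments. An alternative, perhaps more conceptual route would be to cover $X$ by the stars of its vertices, pull back the Euclidean metric through $\psi_v$ on each star, and paste these Riemannian metrics together via a partition of unity subordinate to the cover, using the bounded cover multiplicity from \ref{qcsc:1} to keep the final bi-Lipschitz constant controlled solely by $M$.
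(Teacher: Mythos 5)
The paper does not prove this statement at all: it is imported verbatim as \cite{Cattalani:smoothing}*{Theorem 2} and used as a black box, so there is no internal proof to compare against. Your proposal is therefore an independent argument, and in the two-dimensional case it is essentially correct and complete in outline. The key structural facts you use are right: by the definition of a simplicial complex the metric $d_X$ is locally Euclidean on the interiors of $2$-simplices and across interior edges (unfolding two adjacent triangles), so the only singularities are cone points at vertices; by \ref{qcsc:1} and \ref{qcsc:2} each cone angle $\theta_v$ lies in $[c_1(M),c_2(M)]$ with $c_1(M)>0$; the angle-rescaling map $(\rho,\phi)\mapsto(\rho,\tfrac{2\pi}{\theta_v}\phi)$ (as opposed to the conformal map $z\mapsto z^{2\pi/\theta_v}$, which is \emph{not} bi-Lipschitz) distorts the quadratic form $d\rho^2+\rho^2\,d\phi^2$ by exactly the factor $\max\{1,(2\pi/\theta_v)^{\pm2}\}$, hence is $C(M)$-bi-Lipschitz on length metrics; and pointwise two-sided comparability of positive-definite tensors is indeed preserved by convex combinations, so the interpolated metric stays $C(M)$-comparable to the flat one and the identity map is the desired bi-Lipschitz homeomorphism.

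Three points deserve an explicit sentence each to make this airtight. First, before speaking of a ``smooth convex combination'' you must fix a smooth atlas on the underlying topological manifold: the flat charts away from the vertices together with the charts $\psi_v$ near the vertices do the job, since their transition maps are defined on punctured overlaps and are smooth there; without this the statement that $\psi_v^*g_{\mathrm{Euc}}$ and the flat tensor are both ``smooth'' on the annulus has no meaning. Second, in the application in Section 5 the complex $Y$ may carry boundary (the ambient surface is a compact $2$-manifold with boundary), so you also need the sector version at boundary vertices, $(\rho,\phi)\mapsto(\rho,\tfrac{\pi}{\theta_v}\phi)$, with the same angle bounds; interior points of boundary edges are already flat half-plane points and need no modification. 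Third, when passing from the pointwise tensor comparison to the comparison of length metrics, note that the flat tensor is undefined on the (discrete) vertex set; since a rectifiable path can only spend a null set of parameter time there, the lengths of all paths are still $C(M)$-comparable, and both $d_X$ and $d_g$ are length metrics, which closes the argument. With these additions your proof is a legitimate, elementary replacement for the cited result in the two-dimensional setting used in this paper.
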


\subsection{Proof of Theorem \ref{theorem:main:approximation} \ref{theorem:main:approximation:1} $\Rightarrow$ \ref{theorem:main:approximation:2}}

Let $X$ be a compact Riemannian $2$-manifold with boundary, equipped with a Riemannian metric $g$ that gives rise to a length metric $d_g$ on $X$. Let $d_X$ be a metric on $X$ and suppose that the identity map from $(X,d_g)$ to $(X,d_X)$ is an $\eta$-quasisymmetric homeomorphism (as we may suppose). We invite the reader to study the proof in parallel with Figure \ref{figure:sketch}.

\subsubsection*{Simplicial approximation of the smooth manifold $X$}
By Theorem \ref{theorem:bowditch}, for each sufficiently small $t>0$ there exists a metric simplicial complex $(Z,d_Z)$ whose $2$-simplices are copies of an equilateral triangle of side length $t$ and there exists a uniformly bi-Lipschitz homeomorphism $\tau \colon (Z,d_Z)\to (X,d_g)$. Moreover, by Remark \ref{remark:degree}, the graph $(V,\sim)$ of the $1$-skeleton of $Z$ has uniformly bounded degree. Thus, $(Z,d_Z)$ is a uniformly quasiconformal simplicial complex. 

\subsubsection*{The simplices of $Z$}
Let $S$ be a $2$-simplex of $Z$ with vertices $p_1, p_{2},p_{3}$. For $i\in \{1,2,3\}$ we define
$$d_i=\alpha \cdot d_X(\tau(p_{i}),\tau(p_{{i+1}})),$$
where $p_{4}\coloneqq p_{1}$ and $\alpha>0$ is a parameter to be specified. By Lemma \ref{lemma:simplicial:lower} we have $d_Z(p_i,p_j)\simeq t$ for $i,j\in \{1,2,3\}$. Since $\tau\colon (Z,d_Z)\to (X,d_g)$ is uniformly bi-Lipschitz and the identity map from $(X,d_g)$ to $(X,d_X)$ is $\eta$-quasisymmetric, we conclude from Lemma \ref{lemma:qs_heinonen} that 
$$d_i\simeq_{\eta} d_j$$
for $i,j\in \{1,2,3\}$. By Lemma \ref{lemma:triangle} there exist a $C(\eta)$-quasiconformal simplicial complex $K_S$ consisting of three $2$-simplices and a uniformly quasiconformal simplicial complex $S'$, which arises by connecting the barycenter of the equilateral triangle $S$ to its vertices, with the following properties: there exists a simplicial isomorphism from $S'$ onto $K_S$ so that the edge of $S'$ between $p_{i}$ and $p_{{i+1}}$ is mapped to an edge of $K_S$ of length $d_i$, $i\in \{1,2,3\}$. See Figure \ref{figure:lemma:triangle} for an illustration.

The subdivision of each $2$-simplex $S$ of $Z$ into three $2$-simplices gives rise to a metric simplicial complex $(Z',d_{Z'})$ that is isometric to $(Z,d_Z)$. Moreover, since the $1$-skeleton of $Z$ has bounded degree, we conclude that $(Z',d_{Z'})$ has the same property and is a uniformly quasiconformal simplicial complex.

\subsubsection*{Construction of a simplicial complex $Y$}
Whenever two simplicial complexes $S_1',S_2'$ (as above) of $Z'$ share an edge we glue isometrically the simplicial complexes $K_{S_1}$ and $K_{S_2}$ along the corresponding edges. The orientation of the gluing is specified by the simplicial isomorphisms between $S_i'$ and $K_{S_i}$, $i=1,2$. This process gives rise to a metric simplicial complex $(Y,d_Y)$. Equivalently one may think of ``replacing" each equilateral triangle $S$ of $Z$ with the simplicial complex $K_S$. The simplicial complex $Y$ is $C(\eta)$-quasiconformal.

\subsubsection*{The isomorphism between $Z'$ and $Y$}
Consider the simplicial map $f\colon (Z',d_{Z'})\to (Y,d_Y)$ arising by gluing the simplicial isomorphisms from  each simplicial complex $S'$ of $Z'$ onto the corresponding complex $K_{S}$. We first prove some distortion estimates based on the construction of the simplicial complexes.

Let $T\subset Y$ be a $2$-simplex and $f(p),f(q)$ be vertices of $T$. By Lemma \ref{lemma:simplicial:lower}, $\diam_{d_{Z'}}(f^{-1}(T))\simeq d_{Z'}(p,q)$. Since $\tau\colon (Z',d_{Z'})\to (X,d_X)$ is a composition of a uniformly bi-Lipschitz and an $\eta$-quasisymmetric map, there exists a constant $M_1\geq 1$ that depends only on $\eta$ such that
\begin{align}\label{main:x}
M_1^{-1}\leq  \frac{d_X(\tau(p),\tau(q))}{\diam_{d_X}(\tau (f^{-1}(T)))}\leq M_1.
\end{align} 

Next, let $x\in Y$ be a vertex that lies on a $2$-simplex $T\subset Y$. By construction, $T$ is contained in a simplicial complex $K_S=f(S')$ and one side of $T$ with endpoints $f(p),f(q)$ has length equal to $\alpha d_X(\tau(p),\tau(q))$. By Lemma \ref{lemma:simplicial:lower}, there exists a constant $M_2\geq 1$ depending only on $\eta$ such that
\begin{align}\label{main:epsilon_dt}
M_2^{-1}\leq \frac{\varepsilon(x)}{\diam_{d_T}(T)}\leq M_2
\end{align}
Moreover, by the definition of a quasiconformal simplicial complex, there exists a constant $M_3\geq 1$ depending only on $\eta$ such that 
\begin{align}\label{main:dt_dx}
M_3^{-1}\leq \frac{\diam_{d_T}(T)}{\alpha d_X(\tau(p),\tau(q))}\leq M_3.
\end{align}

\subsubsection*{Approximations of the metric spaces $(Z',d_{Z'})$ and $(Y,d_Y)$}

We start verifying the assumptions of Theorem \ref{theorem:approximation_quasisymmetric} for the map $f\colon (Z',d_{Z'})\to (Y,d_Y)$, which will allow us to conclude that $f$ is quasisymmetric after modifying appropriately the metric of $Y$.

First, note that both spaces $(Z',d_{Z'}), (Y,d_Y)$ are length spaces and in particular they have uniformly bounded turning as required in Theorem \ref{theorem:approximation_quasisymmetric} \ref{tqs:bt}. 

By Lemma \ref{lemma:simplicial:approximation}, there is a uniform constant $L\geq 1$ and a natural $(L,L)$-approxi\-ma\-tion $\mathcal A=((V,\sim),\mathfrak p,\mathfrak r,\mathcal U)$ of $(Z',d_{Z'})$. Specifically, $(V,\sim)$ is the graph of the $1$-skeleton of $Z'$, $\mathfrak p$ is a bijection between $V$ and the vertices of the $1$-skeleton of $Z'$, $\mathfrak r(v)=\varepsilon (\mathfrak{p}(v))$, and $\mathcal U(v)=\inter( \bigcup S(\mathfrak p(v)))$ for $v\in V$. Analogously, by the same lemma, there is an $(L,C(\eta))$-approximation $\mathcal A'=((V,\sim),\mathfrak p',\mathfrak r',\mathcal U')$ of $(Y,d_Y)$. The approximations $\mathcal A$ and $\mathcal A'$ are fine, as long as the side length $t$ of the triangles of $Z$ is sufficiently small.

By Lemma \ref{lemma:simplicial:lower}, for any two distinct vertices $u,v\in V$ we have $d_{Z'}(p_u,p_v)\geq \varepsilon(p_u)=r_u$. This verifies condition \ref{tqs:sep} of Theorem \ref{theorem:approximation_quasisymmetric}. By Lemma \ref{lemma:simplicial:approximation:isomorphism}, for each $v\in V$ the map $f|_{\st_{2L+1}(v)}\colon (\st_{2L+1}(v),d_{Z'})\to (f(\st_{2L+1}(v)), d_Y)$ is $C(\eta)$-quasisymmetric. Therefore, assumption \ref{tqs:star} of Theorem \ref{theorem:approximation_quasisymmetric} holds.

\subsubsection*{The vertex set of $Y$}
It remains to verify Theorem \ref{theorem:approximation_quasisymmetric} \ref{tqs:vert} for a suitable metric $d_{\mathcal S}$ on the set vertex $\mathcal S=f(\mathfrak p(V))$ of $Y$. For $u,v\in V$ we define 
$$d_{\mathcal S}(f(p_u),f(p_v))= d_X( \tau(p_u),\tau(p_v)).$$
Since $\tau\colon (Z',d_{Z'})\to (X,d_g)$ is uniformly bi-Lipschitz and the identity map from $(X,d_g)$ to $(X,d_X)$ is $\eta$-quasisymmetric, we conclude that $f\colon (\mathfrak p(V), d_{Z'})\to (\mathcal S, d_{\mathcal S})$ is $C(\eta)$-quasisymmetric. Finally, we show the inequalities that are required in Theorem \ref{theorem:approximation_quasisymmetric} \ref{tqs:vert} between $d_Y$ and $d_{\mathcal S}$.

Suppose that $u,v\in V$ and $u\sim v$. Then $f(p_u),f(p_v)$ are vertices of a $2$-simplex $T\subset Y$. By \eqref{main:dt_dx} we have
\begin{align*}
d_Y( f(p_u),f(p_v))&\leq \diam_{d_Y}(T) \leq \diam_{d_T}(T) \leq M_3 \alpha d_X(\tau(p_u),\tau(p_v))\\
&=M_3\alpha d_{\mathcal S}(f(p_u),f(p_v)).
\end{align*}
This gives the ultimate inequality appearing in Theorem \ref{theorem:approximation_quasisymmetric} \ref{tqs:vert}.

It remains to ensure that $d_{\mathcal S}\leq d_Y$. Let $u,v\in V$ with $k(u,v)\geq L$. We apply Lemma \ref{lemma:quasiconvex} to $(Y,d_Y)$ with the $(L,C(\eta))$-approximation $\mathcal A'$. As a consequence, there exist vertices $u=w_0,w_1,\dots,w_n=v$, $n\geq 0$, with $w_{i-1}\sim w_i$ for each $i\in \{1,\dots,n\}$, such that
$$\sum_{i=0}^n r_{w_i}' \leq M_4 d_Y(f(p_u),f(p_v)),$$
where $M_4$ is a positive constant that depends only on $\eta$. For $i\in \{1,\dots,n\}$, let $S_i$ be a $2$-simplex of $Z'$ that contains $p_{w_{i-1}}$ and $p_{w_i}$. By \eqref{main:x}, \eqref{main:dt_dx}, and \eqref{main:epsilon_dt}, we have
\begin{align*}
d_{\mathcal S}(f(p_u),f(p_v)) &= d_X(\tau(p_u),\tau(p_v))\leq \sum_{i=0}^n \diam_{d_X}(\tau (S_i))\\
&\leq \alpha^{-1}M_1M_2M_3 \sum_{i=0}^n \varepsilon(f(p_{w_i}))=\alpha^{-1}M_1M_2M_3\sum_{i=0}^n r_{w_i}'\\
&\leq \alpha^{-1}M_1M_2M_3M_4 d_Y(f(p_u),f(p_v)).
\end{align*}
Upon choosing $\alpha \geq M_1M_2M_3M_4$ we have $d_{\mathcal S}(f(p_u),f(p_v))\leq d_Y(f(p_u),f(p_v))$. 

Next, suppose that $k(u,v)<L$ and that $u\neq v$. By Lemma \ref{lemma:simplicial:lower} we have 
\begin{align}\label{main:dy_ru}
d_Y(f(p_u),f(p_v))\geq \varepsilon (f(p_u))=r_u'.
\end{align}
Now, consider vertices $u=w_0,w_1,\dots,w_n=v$, $1\leq n<L$, with $w_{i-1}\sim w_i$ for each $i\in \{1,\dots,n\}$. Arguing as above, we have
\begin{align*}
d_{\mathcal S}(f(p_u),f(p_v))=d_X(\tau(p_u),\tau(p_v))\leq \alpha^{-1}M_1M_2M_3 \sum_{i=0}^n r_{w_i}'.
\end{align*}
By property \ref{a:3} of an approximation (see Section \ref{section:approximations:definition}), there exists a constant $M_5\geq 1$ depending only on $\eta$ such that  $r_{w_i}'\leq M_5 r_u'$ for each $i\in \{0,\dots,n\}$. Therefore, by \eqref{main:dy_ru} we have
\begin{align*}
d_{\mathcal S}(f(p_u),f(p_v)) \leq \alpha^{-1}M_1M_2M_3 L M_5 r_u' \leq \alpha^{-1}M_1M_2M_3 L M_5 d_Y(f(p_u),f(p_v)).
\end{align*}
We finally choose $\alpha=M_1M_2M_3\max\{M_4, LM_5\}$ and we have completed the verification of condition \ref{tqs:vert} of Theorem \ref{theorem:approximation_quasisymmetric}.

\subsubsection*{Smoothing of the simplicial complex $Y$}
In order to smoothen the simplicial complex $Y$, we will use the result of Cattalani, Theorem \ref{theorem:cattalani}. This yields a Riemannian metric $h$ on $Y$ and the corresponding intrinsic metric $d_h$ such that
\begin{align}\label{main:a}
A^{-1}d_h\leq d_Y\leq d_h
\end{align}
for some constant $A\geq 1$ that depends only on $\eta$. We have verified that the map $f\colon (Z',d_{Z'})\to (Y,d_Y)$ satisfies the assumptions of Theorem \ref{theorem:approximation_quasisymmetric}. It is immediate that assumptions \ref{tqs:bt}, \ref{tqs:sep}, and \ref{tqs:star} of the theorem are satisfied also for the map $f\colon (Z',d_{Z'})\to (Y,d_h)$. Since $d_Y\simeq_\eta d_h$ and $d_Y\leq d_h$, we see that assumption \ref{tqs:vert} is also true. 

\subsubsection*{Construction of the space $(Y,\widetilde d)$}
We consider the metric $\widetilde d$ on $Y$ arising by gluing $d_h$ with $d_{\mathcal S}$ as in Lemma \ref{lemma:glue}. Then $(Y,\widetilde d)$ is locally isometric to the length space $(Y,d_h)$, in general we have $\widetilde d\leq d_h$, and the metric $\widetilde d$ agrees with $d_{\mathcal S}$ on the vertex set $f(\mathfrak p(V))$. By Theorem \ref{theorem:approximation_quasisymmetric}, the map $f\colon (Z',d_{Z'})\to (Y,\widetilde d)$ is $\eta'$-quasisymmetric for some distortion function $\eta'$ that depends only on $\eta$. The composition $f\circ \tau^{-1}\colon (X,d_g)\to (Y,\widetilde d)$ is $\widetilde \eta$-quasisymmetric for some distortion function $\widetilde \eta$ that depends only on $\eta$. 

\subsubsection*{Gromov--Hausdorff distance to $(X,d_X)$}
In order to complete the proof of Theorem \ref{theorem:main:approximation}, we will demonstrate that the map $\phi=\tau\circ f^{-1}\colon (Y,\widetilde d)\to (X,d_X)$ is an $\varepsilon$-isometry for an appropriate small $\varepsilon$. Note that if $T$ is a $2$-simplex of $Y$, by \eqref{main:a}, \eqref{main:dt_dx}, and \eqref{main:x} we have
\begin{align}\label{main:tilde_dx}
\diam_{\widetilde d}(T) &\leq \diam_{d_h}(T)\leq A\diam_{d_Y}(T)\\\notag
&\leq A\diam_{d_T}(T)\leq A\alpha M_1M_3 \diam_{d_X}(\phi(T)).
\end{align}
Since the identity map from $(X,d_g)$ to $(X,d_X)$ is $\eta$-quasisymmetric, by Lemma \ref{lemma:qs_heinonen} we have
\begin{align}\label{main:dx_dg}
\diam_{d_X}(\phi(T)) \leq \diam_{d_X}(X)\cdot  \eta \left( \frac{2\diam_{d_g}(\phi(T))}{\diam_{d_g}(X)}\right).
\end{align}
Since $\tau\colon (Z',d_{Z'})\to (X,d_g)$ is uniformly bi-Lipschitz, we have
\begin{align}\label{main:dg_dz}
\diam_{d_g}(\phi(T))\leq C \diam_{d_{Z'}}(f^{-1}(T)).
\end{align}
Finally, by construction, the $2$-simplices of $Z$ have side length $t$ and the $2$-simplices of $Z'$ have diameter bounded above by $t$, where $t$ is fixed at the beginning of the proof. Therefore, combining \eqref{main:tilde_dx}, \eqref{main:dx_dg}, and \eqref{main:dg_dz}, we have
\begin{align}\label{main:t_bound}
\max\{\diam_{\widetilde d}(T), \diam_{d_X}(\phi(T))\}\leq C_1 \eta(C_2t) 
\end{align}
where $C_1,C_2$ are positive constants that depend on $\eta$, $d_g$, and $d_X$, but not on $t$.

Now, let $y_1,y_2\in Y$ and suppose that they lie in $2$-simplices $T_1,T_2\subset Y$, respectively. Let $f(p_{u})$ be a vertex of $T_1$ and $f(p_v)$ be a vertex of $T_2$. We have
\begin{align*}
\widetilde d(y_1,y_2)&\leq \widetilde d(y_1,f(p_u)) +\widetilde d(f(p_u),f(p_v)) +\widetilde d(f(p_v),y_2)\\
&\leq \diam_{\widetilde d}(T_1)+ d_{\mathcal S}(f(p_u),f(p_v))+ \diam_{\widetilde d}(T_2)\\
&\leq 2C_1\eta(C_2t)+ d_X(\tau(p_u),\tau(p_v))\\
&\leq 2C_1\eta(C_2t)+ d_X(\tau(p_u), \phi(y_1)) +d_X(\phi(y_1),\phi(y_2)) + d_X(\phi(y_2),\tau(p_v))\\
&\leq 2C_1\eta(C_2t)+ \diam_{d_X}(\phi(T_1)) +d_X(\phi(y_1),\phi(y_2)) + \diam_{d_X}(\phi(T_2))\\
&\leq 4C_1\eta(C_2t)+ d_X(\phi(y_1),\phi(y_2)).
\end{align*}
Similarly, 
$$d_X(\phi(y_1),\phi(y_2))\leq 4C_1\eta(C_2t)+\widetilde d(y_1,y_2).$$
We have shown that $\phi$ is an $\varepsilon$-isometry for $\varepsilon=4C_1\eta(C_2t)$. Observe that $\varepsilon\to 0$ as $t\to 0$. This shows that the space $(Y,\widetilde d)$ can be taken as close to $(X,d_g)$ as we wish in the Gromov--Hausdorff distance. This completes the proof of Theorem \ref{theorem:main:approximation}.\qed

\subsection{Proof of Theorem \ref{theorem:main:approximation:bilip}}
We use the exact same notation and construction from the previous proof. We show that if the identity map from $(X,d_g)$ onto $(X,d_X)$ is $\lambda$-bi-Lipschitz for some $\lambda\geq 1$, then the map $f\circ \tau^{-1}\colon (X,d_g)\to (Y,\widetilde d)$ is $C(\lambda)$-bi-Lipschitz. 

Note that the identity map from $(X,d_g)$ onto $(X,d_X)$ is $\eta$-quasisymmetric for $\eta(t)=\lambda^2t$. Since $\tau \colon (Z,d_Z)\to (X,d_X)$ is $C(\lambda)$-bi-Lipschitz, for each $2$-simplex of $Z$ of side length $t$ and with vertices $p_1,p_2,p_3$, the distances $d_X(\tau(p_i),\tau(p_{i+1}))$, $i=1,2,3$, are comparable to $t$, depending only on $\lambda$. Thus, the constructed space $(Y,d_Y)$ is a $C(\lambda)$-quasiconformal simplicial complex with the additional property that each triangle of $Y$ can be mapped to an equilateral triangle of side length $t$ with a $C(\lambda)$-bi-Lipschitz linear map. We conclude that the simplicial map $f\colon (Z',d_{Z'})\to (Y,d_Y)$, restricted to each $2$-simplex of $Z'$, is linear and $C(\lambda)$-bi-Lipschitz. Thus, $f\colon (Z',d_{Z'})\to (Y,d_Y)$ is $C(\lambda)$-bi-Lipschitz; recall the definition of the metric of a simplicial complex from Section \ref{section:simplicial}. As a consequence of the above and \eqref{main:a}, the map $f\circ \tau^{-1}\colon (X,d_g)\to (Y,d_h)$ is $C(\lambda)$-bi-Lipschitz. 

It remains to show that the identity map from $(Y,d_h)$ onto $(Y,\widetilde d)$ is bi-Lipschitz. The metric $\widetilde d$ arises by gluing $d_h$ with $d_{\mathcal S}$ as in Lemma \ref{lemma:glue}. Recall that $\mathcal S$ is the vertex set of $Y$ and by definition, for $p,q\in \mathcal S$ we have
$$d_{\mathcal S}(p,q) = d_X( \tau(f^{-1}(p)),\tau(f^{-1}(q))).$$
Since the maps $id\colon (X,d_g)\to (X,d_X)$ and $f\circ \tau^{-1}\colon (X,d_g)\to (Y,d_h)$ are $C(\lambda)$-bi-Lipschitz, we have
$$d_{\mathcal S}(p,q) \simeq_{\lambda} d_{g}( \tau(f^{-1}(p)),\tau(f^{-1}(q))) \simeq_{\lambda} d_h(p,q).$$
By Lemma \ref{lemma:glue} \ref{g:lambda}, the identity map from $(Y,d_h)$ onto $(Y,\widetilde d)$ is $C(\lambda)$-bi-Lipschitz, as desired.\qed

\section{Metric spheres of finite area}\label{section:finite_area}

This section contains the proofs of Theorem \ref{theorem:bac}, Theorem \ref{theorem:reciprocal}, and Theorem \ref{theorem:qc_qs}. The material is independent of the rest of the paper.

\subsection{Geometric notions}\label{section:geometric_notions}
Let $(X,d,\mu)$ be a metric measure space. For a set $G\subset X$ and disjoint sets $E,F\subset G$ we define $\Gamma(E,F;G)$ to be the family of curves in $G$ joining $E$ and $F$. Let $\Gamma$ be a family of curves in $X$. A Borel function $\rho\colon X\to [0,\infty]$ is \textit{admissible} for $\Gamma$ if $\int_\gamma \rho\, ds\geq 1$ for all rectifiable paths $\gamma\in \Gamma$. Let $Q\geq 1$. We define the \textit{$Q$-modulus} of $\Gamma$ as
$$\Mod_Q \Gamma=\inf_\rho \int_X \rho^Q \, d\mu,$$
where the infimum is taken over all admissible functions $\rho$ for $\Gamma$. We say that $X$ is a \textit{$Q$-Loewner space} if there exists a decreasing function $\phi\colon (0,\infty)\to (0,\infty)$ such that for every pair of disjoint non-degenerate continua $E,F\subset X$ we have
$$\Mod_Q\Gamma(E,F;X)\geq \phi(\Delta(E,F)),$$
where $\Delta(E,F)$ is the \textit{relative distance} of $E$ and $F$, which is defined by 
$$ \frac{\dist(E,F)}{\min\{\diam (E),\diam(F)\}}.$$
We say that $X$ is \textit{Ahlfors $Q$-regular} if there exists a constant $M\geq 1$ such that 
$$M^{-1}r^Q\leq \mu(B(x,r))\leq Mr^Q$$
for each $x\in X$ and $0<r<\diam (X)$.

Let $(X,d)$ be a metric space. We say that $X$ is \textit{linearly locally connected} ($\llc$) if there exists a constant $M\geq 1$ such that for each ball $B(a,r)$ in $X$ the following two conditions hold.
\begin{enumerate}[label=\normalfont(LLC$_{\arabic*}$)]
\item\label{llc1} For every $x,y\in B(a,r)$ there exists a connected set $E\subset B(a,Mr)$ that contains $x$ and $y$.
\item\label{llc2} For every $x,y\in X\setminus B(a,r)$ there exists a connected set $E\subset X\setminus B(a,r/M)$ that contains $x$ and $y$.
\end{enumerate}
In that case we say that $X$ is $M$-LLC. We say that $X$ is \textit{doubling} if there exists a constant $M\geq 1$ such that for every $R>0$, every ball of radius $R$ can be covered by at most $M$ balls of radius radius $R/2$. In that case we say that $X$ is $M$-doubling.

\subsection{Quasiconformal maps}
A \textit{metric surface} is a $2$-dimensional topological manifold with a metric inducing its topology. In this section, metric surfaces are always considered as metric measure spaces endowed with the Hausdorff $2$-measure (or else area measure). Let $X,Y$ be metric surfaces of locally finite Hausdorff $2$-measure. A homeomorphism $h\colon X\to Y$ is \textit{quasiconformal} if there exists $K\geq 1$ such that 
$$K^{-1} \Mod_2\Gamma\leq \Mod_2 h(\Gamma)\leq K\Mod_2\Gamma$$
for every curve family $\Gamma$ in $X$. In this case we say that $h$ is $K$-quasiconformal. We say that a map $h\colon X\to Y$ is \textit{weakly quasiconformal} if $h$ is the uniform limit of a sequence of homeomorphisms and  there exists $K>0$ such that for every curve family $\Gamma$ in $X$ we have
\begin{align*}
\Mod_2 \Gamma\leq K \Mod_2 h(\Gamma).
\end{align*}
In this case we say that $h$ is weakly $K$-quasiconformal.

%A continuous map between topological spaces is \textit{cell-like} if the preimage of each point is a continuum that is contractible in each of its open neighborhoods; in the plane, such sets coincide with non-separating continua. A continuous, surjective, proper, and cell-like map $h\colon X\to Y$ is \textit{weakly quasiconformal} if  there exists $K>0$ such that for every curve family $\Gamma$ in $X$ we have
%\begin{align}\label{definition:wqc}
%\Mod_2 \Gamma\leq K \Mod_2 h(\Gamma).
%\end{align}
%In this case, we say that $h$ is weakly $K$-quasiconformal. We will restrict attention to the case that $X$ and $Y$ are homeomorphic to the $2$-sphere. Then a map $h\colon X\to Y$ is weakly quasiconformal if and only if it is a uniform limit of homeomorphisms and satisfies \eqref{definition:wqc}; see  \cite{Youngs:monotone} or \cite{Daverman:decompositions}*{Corollary 25.1A}.

Rajala \cite{Rajala:uniformization} introduced the notion of a \textit{reciprocal} metric surface, which we do not define here for the sake of brevity, and proved that a metric $2$-sphere $X$ of finite area is quasiconformally equivalent to the Riemann sphere $\widehat \C$ if and only if $X$ is reciprocal. 

More generally, it was shown in \cite{NtalampekosRomney:nonlength} that any metric surface of locally finite Hausdorff $2$-measure admits a weakly quasiconformal parametrization by a Riemannian surface of the same topological type. The following special case is sufficient for our purposes.

\begin{theorem}[\cite{NtalampekosRomney:nonlength}*{Theorem 1.2}]\label{theorem:wqc}
    Let $X$ be a metric $2$-sphere of finite Hausdorff $2$-measure. Then there exists a weakly $(4/\pi)$-quasiconformal map from $\widehat \C$ onto $X$. 
\end{theorem}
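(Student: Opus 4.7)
The plan is to approximate $(X,d_X)$ by a sequence of reciprocal metric spheres, apply Rajala's uniformization theorem to each approximation to obtain quasiconformal parametrizations by $\widehat{\C}$, and then extract a limiting weakly quasiconformal map through standard compactness arguments.

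First I would construct a regularization of $(X,d_X)$: a sequence of metrics $d_n$ on the underlying topological sphere such that (i) each $(X,d_n)$ is reciprocal, (ii) $d_n \to d_X$ uniformly as $n\to\infty$, and (iii) the Hausdorff $2$-measures satisfy $\mathcal{H}^2_{d_n}(X) \to \mathcal{H}^2_{d_X}(X)$. A natural candidate is $d_n := d_X + \varepsilon_n \rho$ for a fixed smooth Riemannian background metric $\rho$ on $S^2$ and $\varepsilon_n \searrow 0$, or else a sausage-type enlargement of $X$ by a small Euclidean factor; the Riemannian contribution enforces enough infinitesimal non-degeneracy to cross the reciprocality threshold, while the vanishing parameter $\varepsilon_n$ keeps the area contribution controlled.

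By Rajala's theorem, reciprocality of $(X,d_n)$ yields quasiconformal homeomorphisms $h_n \colon \widehat{\C} \to (X,d_n)$, and the sharp form of that theorem gives the one-sided bound $\Mod_2 \Gamma \leq (4/\pi)\, \Mod_2 h_n(\Gamma)$ for every curve family $\Gamma$ in $\widehat{\C}$. Normalize the $h_n$ by post-composing with M\"obius transformations so that three fixed points of $\widehat{\C}$ are mapped to three prescribed well-separated points of $X$. Uniform distortion estimates for quasiconformal maps, together with the uniform area bound, yield equicontinuity of $\{h_n\}$, so Arzela--Ascoli extracts a subsequence converging uniformly to a continuous surjection $h\colon \widehat{\C}\to X$. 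Topological rigidity of $S^2$ forces $h$ to be monotone.

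To conclude, we must propagate the inequality $\Mod_2 \Gamma \leq (4/\pi)\, \Mod_2 h_n(\Gamma)$ to the limit. Given a curve family $\Gamma$ in $\widehat{\C}$ and an admissible function $\rho$ for $h(\Gamma)$, build almost-admissible functions $\rho_n$ for $h_n(\Gamma)$ using uniform convergence $h_n \to h$ and then apply lower semicontinuity of the $2$-modulus combined with the area convergence $\mathcal{H}^2_{d_n}(X)\to \mathcal{H}^2_{d_X}(X)$. The main obstacle will be producing approximations with a \emph{uniform} reciprocality constant that yields exactly the sharp distortion $4/\pi$ in the limit; any slackness propagates into a worse constant. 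This forces the regularization to be chosen so that it respects the infinitesimal area comparisons underlying Rajala's bound. A secondary difficulty is handling the possibility that the limit $h$ fails to be a homeomorphism: lifting admissible functions across non-injective fibers of $h$ must be carried out via the monotone structure of $h$ on $S^2$, which is precisely the reason the result is phrased in terms of \emph{weak} rather than genuine quasiconformality.
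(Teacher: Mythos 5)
The statement you are proving is not established in this paper at all: Theorem \ref{theorem:wqc} is imported verbatim from the reference \cite{NtalampekosRomney:nonlength}, so there is no internal proof to compare against; your proposal has to be judged against the actual argument in that reference, whose overall architecture (approximate, uniformize the approximations, pass to the limit) your sketch does resemble. The decisive gap is the regularization step, which is exactly where the real content of the cited theorem lies. You assert that $d_n = d_X + \varepsilon_n\rho$ (or a ``sausage'' enlargement) is reciprocal because ``the Riemannian contribution enforces enough infinitesimal non-degeneracy,'' but no such mechanism is known. Reciprocality of a finite-area sphere can fail because a curve family through a single point, or through a small continuum, has positive $2$-modulus --- this is what happens in the non-reciprocal examples of Romney and Ntalampekos--Romney, obtained by collapsing sets of positive area --- and adding $\varepsilon_n\rho$ only increases distances and hence the Hausdorff $2$-measure, so it does not repair this. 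The one broadly applicable sufficient condition for reciprocality, the upper mass bound $\mathcal H^2(B(x,r))\leq Cr^2$ from Rajala's work, is also not achieved by your perturbation, since the perturbed measure dominates the original (possibly wildly irregular) one. The proof in \cite{NtalampekosRomney:nonlength} does not perturb the metric; it builds polyhedral approximations by decomposing the surface combinatorially into cells of controlled geometry and replacing each cell by a Euclidean polygon, and these polyhedral surfaces are reciprocal for trivial reasons (they are locally Euclidean). Constructing such approximations with uniform constants for an arbitrary non-length metric of finite area is the hard part of that paper.

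Two further points would need work even if the approximation existed. First, the equicontinuity of $\{h_n\}$ does not follow from ``uniform distortion estimates'' in the abstract: the targets $(X,d_n)$ are not uniformly Loewner or $\llc$, and the correct argument uses the reverse modulus inequality applied to the family of \emph{separating} curves of an annulus, together with the uniform area bound and a three-point normalization, to force $\diam h_n(B(x,r))\to 0$ uniformly; you should make this explicit rather than appeal to distortion. Second, the constant $4/\pi$ is sharp and arises in the cited proof from a precise comparison between Hausdorff $2$-measure and the pulled-back area under the polyhedral approximation; your own remark that ``any slackness propagates into a worse constant'' correctly identifies that your scheme, as written, would at best produce a weakly $K$-quasiconformal map for some unspecified $K$, not the stated $4/\pi$. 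As it stands the proposal identifies the right skeleton but leaves unproved precisely the step that makes the theorem nontrivial.
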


%Here $\widehat \C$ denotes the Riemann sphere, equipped with the spherical metric and the spherical measure. 
When the target surface is reciprocal, a weakly quasiconformal map is upgraded to a genuine quasiconformal map, according to the next lemma.

\begin{lemma}[\cite{MeierNtalampekos:rigidity}*{Lemma 2.11}]\label{lemma:wqc_qc}
Let $X,Y$ be metric $2$-spheres with finite Hausdorff 2-measure such that $Y$ is reciprocal. Then every weakly quasiconformal map $f\colon X \to Y$ is a quasiconformal homeomorphism, quantitatively.
\end{lemma}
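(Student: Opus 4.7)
The plan is to reduce the problem to the Riemann sphere via the reciprocity of $Y$, then compose with the weakly quasiconformal parametrization from Theorem~\ref{theorem:wqc} to manufacture a self-map of $\widehat{\C}$, and finally exploit the smooth structure of the Riemann sphere to upgrade the one-sided modulus inequality to a two-sided one.

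First, by Rajala's uniformization theorem, the reciprocity of $Y$ furnishes a $K_1$-quasi\-con\-for\-mal homeomorphism $g\colon Y\to\widehat{\C}$ with $K_1$ quantitative. The composition $\tilde f=g\circ f\colon X\to\widehat{\C}$ inherits the weakly quasiconformal property, because it is the uniform limit of the homeomorphisms $g\circ f_n$ (where $f_n$ approximates $f$), and the modulus inequality for $\tilde f$ with constant $KK_1$ follows by composing the modulus inequalities for $f$ and for $g^{-1}$. It therefore suffices to prove that $\tilde f$ is a quasiconformal homeomorphism, since $f=g^{-1}\circ\tilde f$ inherits the same property with a quantitatively controlled constant.

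Next, I would take the weakly $(4/\pi)$-quasiconformal map $\phi\colon\widehat{\C}\to X$ provided by Theorem~\ref{theorem:wqc} and consider $h=\tilde f\circ\phi\colon\widehat{\C}\to\widehat{\C}$. Since both $\tilde f$ and $\phi$ are uniform limits of homeomorphisms (approximated by $\tilde f_n$ and $\phi_n$), so is $h$, via the diagonal sequence $\tilde f_n\circ\phi_n$; moreover, chaining the modulus inequalities gives $\Mod_2\Gamma'\leq (4/\pi)KK_1\cdot\Mod_2 h(\Gamma')$ for every curve family $\Gamma'$ in $\widehat{\C}$. As a uniform limit of sphere self-homeomorphisms, $h$ is a monotone surjection. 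If some fiber $h^{-1}(y_0)$ had positive diameter, I would choose a disjoint continuum $F\subset\widehat{\C}$ and consider $\Gamma'=\Gamma(h^{-1}(y_0),F;\widehat{\C})$, which has strictly positive planar $2$-modulus by classical estimates, while $h(\Gamma')$ consists of curves emanating from the single point $y_0$ and therefore has vanishing $2$-modulus since points have zero capacity on $\widehat{\C}$. This contradicts the modulus inequality for $h$, so $h$ must be injective, hence a homeomorphism. Injectivity of $h$ then forces $\phi$ to be injective, and thus $\phi$ is a homeomorphism, and consequently $\tilde f=h\circ\phi^{-1}$ is a homeomorphism as well.

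It remains to promote $h$ to a genuine quasiconformal map and transfer the reverse modulus inequality to $\tilde f$. Since $h$ is a self-homeomorphism of the smooth Riemannian sphere satisfying a one-sided geometric modulus bound, classical quasiconformal theory applies: the modulus inequality yields Sobolev regularity $h\in W^{1,2}_{\mathrm{loc}}$ and uniformly bounded distortion, so $h$ is analytically $K''$-quasiconformal and in particular satisfies $\Mod_2 h(\Gamma')\leq K''\Mod_2\Gamma'$. For a curve family $\Gamma$ in $X$, setting $\Gamma'=\phi^{-1}(\Gamma)$ and combining the $K''$-quasiconformality of $h$ with the weak quasiconformality of $\phi$ gives
\begin{align*}
\Mod_2\tilde f(\Gamma)=\Mod_2 h(\Gamma')\leq K''\Mod_2\Gamma'\leq (4/\pi)K''\Mod_2\Gamma,
\end{align*}
which, in conjunction with the weakly quasiconformal inequality already enjoyed by $\tilde f$, shows that $\tilde f$ (and hence $f$) is quasiconformal with constant depending only on $K$ and the reciprocal constant of $Y$. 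The principal obstacle is the step that deduces analytic quasiconformality of the self-map $h$ from the one-sided geometric modulus bound: one must extract Sobolev regularity from the modulus upper bound, say via energy minimization or capacity estimates on the smooth sphere, and then invoke the equivalence of the geometric and analytic definitions of quasiconformality to conclude.
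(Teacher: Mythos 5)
The paper cites this lemma from \cite{MeierNtalampekos:rigidity} without reproducing the argument, so there is no internal proof to compare against; I evaluate your proposal on its own terms. The overall architecture is sound and is, I believe, the natural one: uniformize the reciprocal target $Y$ to $\widehat{\C}$ by Rajala's theorem, precompose $\tilde f=g\circ f$ with the weakly quasiconformal parametrization $\phi\colon\widehat{\C}\to X$ from Theorem~\ref{theorem:wqc}, observe that $h=\tilde f\circ\phi$ is a weakly quasiconformal self-map of the Riemann sphere, upgrade $h$ to a genuine quasiconformal homeomorphism using the smooth structure of $\widehat{\C}$, and transfer the reverse modulus inequality back to $\tilde f$ and hence to $f$.

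Two steps should be tightened. First, what you flag as the ``principal obstacle'' is in fact a direct citation: a homeomorphism $h$ of $\widehat{\C}$ satisfying $\Mod_2\Gamma'\leq C\Mod_2 h(\Gamma')$ for every curve family is exactly the statement that $h^{-1}$ has finite outer dilatation, and by V\"ais\"al\"a's equivalence theorems a homeomorphism of a planar domain with finite outer dilatation is (analytically, hence two-sidedly geometrically) quasiconformal, quantitatively; there is no by-hand Sobolev extraction required. Second, your argument for injectivity of $h$ presupposes that $h$ is monotone, so that the fiber $h^{-1}(y_0)$ is a continuum and the Loewner estimate applies; while it is true that a uniform limit of sphere self-homeomorphisms is monotone, this is a nontrivial topological fact that deserves a citation. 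A cleaner option, already available in the paper, is to note that on $\widehat{\C}$ every family of nonconstant curves through a fixed point has zero $2$-modulus and to invoke \cite{NtalampekosRomney:length}*{Theorem 7.4} (used in the proof of Theorem~\ref{theorem:upgrade} for exactly this purpose) to conclude directly that the weakly quasiconformal map $h$ is a homeomorphism. With these two references supplied, your argument is complete.
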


Tyson \cite{Tyson:lusin} proved that quasisymmetric maps from Euclidean space into metric surfaces of finite area are weakly quasiconformal. We state a special case of that result.
\begin{theorem}[\cite{Tyson:lusin}*{Theorem 3.13}]\label{theorem:tyson}
Let $X$ be a metric $2$-sphere of finite Hausdorff $2$-measure. Then every quasisymmetric map from $\widehat \C$ onto $X$ is weakly quasiconformal, quantitatively.
\end{theorem}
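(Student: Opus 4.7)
The plan is to establish the modulus inequality $\Mod_2 \Gamma \le K \Mod_2 f(\Gamma)$ by the standard \emph{pullback} strategy: from an admissible density for $f(\Gamma)$ on $X$, construct a competitor density on $\widehat{\C}$ that is admissible for $\Gamma$ and has controlled $L^2$ norm. Fix $\rho$ admissible for $f(\Gamma)$ and, assuming $f$ is $\eta$-quasisymmetric, I would set
$$\tilde\rho(x) = (\rho \circ f)(x) \cdot L_f(x), \qquad L_f(x) = \limsup_{r \to 0^+} \frac{\sup_{|y-x| \le r} d_X(f(x),f(y))}{r}.$$

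First I would establish a Jacobian comparison $L_f(x)^2 \le K \, J_f(x)$ almost everywhere on $\widehat{\C}$, where $J_f$ is the Radon--Nikodym derivative of the pullback measure $f^*\mathcal{H}^2_X$ with respect to spherical Lebesgue measure. The $\eta$-quasisymmetry forces $f(B(x,r))$ to be quantitatively round: it is trapped between two concentric balls of comparable radii $r' \simeq_\eta \inf_{|y-x|=r} d_X(f(x), f(y))$. Combined with the doubling of $\widehat{\C}$ (transferred via a Vitali covering applied to the images of balls $B(x,r/2)$), this yields the area lower bound $\mathcal{H}^2(f(B(x,r))) \gtrsim_\eta L(x,r)^2$ for small $r$, where $L(x,r) = \sup_{|y-x|\le r} d_X(f(x),f(y))$. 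The Lebesgue differentiation theorem in $\widehat{\C}$ then delivers $L_f^2 \le K J_f$ almost everywhere.

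Next I would show that $\tilde\rho$ is admissible for a subfamily $\Gamma' \subset \Gamma$ with $\Mod_2(\Gamma \setminus \Gamma') = 0$. This is a Lusin/absolute-continuity step: for modulus-almost every $\gamma \in \Gamma$, the image $f \circ \gamma$ is rectifiable and $\ell(f \circ \gamma) \le \int_\gamma L_f \, ds$. Fuglede's lemma, applied to the pointwise approximants $L^{(r)}(x) = \sup_{|y-x|\le r} d_X(f(x),f(y))/r$ that limit to $L_f$ up to bounded multiplicative errors, allows me to discard a zero $2$-modulus exceptional family along which length could blow up. For $\gamma \in \Gamma'$ one then obtains
\begin{align*}
\int_\gamma \tilde\rho \, ds \;=\; \int_\gamma (\rho \circ f) L_f \, ds \;\ge\; \int_{f\circ\gamma} \rho \, ds \;\ge\; 1.
\end{align*}

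The final step is the $L^2$ bound: using the Jacobian comparison and the change of variables formula,
$$\int_{\widehat{\C}} \tilde\rho^2 \, dm \;=\; \int (\rho \circ f)^2 L_f^2 \, dm \;\le\; K \int (\rho \circ f)^2 J_f \, dm \;\le\; K \int_X \rho^2 \, d\mathcal{H}^2_X,$$
where the finiteness of $\mathcal{H}^2(X)$ ensures $J_f \in L^1$ and rules out pathologies in the change of variables. Infimizing over admissible $\rho$ yields the desired inequality with $K = K(\eta)$. The hard part will be the Lusin/upper-gradient step: controlling $\ell(f \circ \gamma)$ by $\int_\gamma L_f \, ds$ outside a zero-modulus family is delicate precisely because $X$ is not a priori Ahlfors $2$-regular or even doubling with controlled constants; this is the Lusin-type theorem on absolute continuity of quasisymmetric maps that is the main content of \cite{Tyson:lusin}.
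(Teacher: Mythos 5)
The paper offers no proof of this statement: it is imported verbatim from Tyson as \cite{Tyson:lusin}*{Theorem 3.13} and used as a black box, so there is no internal argument to compare yours against. On its own terms, your outline has the right general shape (pull back an admissible density by $(\rho\circ f)\,L_f$, prove a Jacobian comparison $L_f^2\le K J_f$, handle admissibility on all but a $2$-modulus-null family via Fuglede), and you are upfront that the absolute-continuity step is the main content of the cited reference.

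There is, however, a genuine gap at the step you present as routine: the lower volume bound $\mathcal H^2(f(B(x,r)))\gtrsim_\eta L(x,r)^2$. Quasisymmetric roundness, the doubling of $\widehat{\C}$, and a Vitali covering are all statements about the source and cannot by themselves produce a \emph{lower} bound on the target measure of $f(B(x,r))$. The implicit ingredient you need is a lower mass bound in $X$, and since $X$ is only a metric $2$-sphere of finite area --- not Ahlfors $2$-regular --- the bound $\mathcal H^2(B_X(y,s))\gtrsim s^2$ is simply false: for the boundary surface of a capped cylinder of radius $\varepsilon\ll 1$ and height $1$, a ball of radius $1/4$ centered on the tube has area about $\pi\varepsilon\ll 1/16$. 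The correct statement is that quasisymmetric images of Euclidean balls (not arbitrary balls of $X$) satisfy $\mathcal H^2(f(B(x,r)))\ge c(\eta)\,\diam(f(B(x,r)))^2$, and its proof genuinely uses the surface structure and the quasisymmetry together --- for instance via the coarea inequality applied to $y\mapsto \dist(y, f(\br{B}(x,r)))$ on the image annulus $f(B(x,2r)\setminus \br{B}(x,r))$, combined with the topological fact that each level set separates the two image boundary circles and hence contains a continuum of diameter $\gtrsim_\eta \diam f(B(x,r))$. This volume lemma and the Lusin/Fuglede step you defer to the reference are essentially the entire content of the theorem, so as written your proposal reduces the statement to its two hard lemmas without supplying either.
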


Conversely, weakly quasiconformal maps can be upgraded to quasisymmetric maps if one imposes some geometric assumptions on the spaces, namely the ones appearing in the statement of Theorem \ref{theorem:smooth}.

\begin{theorem}\label{theorem:upgrade}
Let $X,Y$ be metric $2$-spheres of finite Hausdorff $2$-measure and let $f\colon X\to Y$ be a weakly quasiconformal map. For $i\in \{1,2,3\}$ let $x_i\in X$ and suppose there exists $L\geq 1$ such that that for $i\neq j$ we have
\begin{align*}
{d_X(x_i,x_j)}\geq L^{-1}\diam (X) \quad \text{and} \quad d_Y(f(x_i), f(x_j)) \geq L^{-1}{\diam (Y)}.
\end{align*}  
If $X$ satisfies \ref{smooth:loewner} and $Y$ satisfies \ref{smooth:llc-mod}, then $f$ is a quasisymmetric homeomorphism.
\end{theorem}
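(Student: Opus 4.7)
The plan is to promote the weakly quasiconformal map $f$ to a quasisymmetric homeomorphism in three stages: first showing $f$ is a homeomorphism, then establishing a weak form of quasisymmetry, and finally upgrading via a standard criterion.

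\textbf{Stage 1 (homeomorphism).} The map $f$ is a uniform limit of homeomorphisms between topological $2$-spheres, so each fiber $f^{-1}(y)$ is a continuum. If some fiber $E_0 := f^{-1}(y_0)$ were non-degenerate, I would pair it with any non-degenerate continuum $F \subset X\setminus E_0$: the $2$-Loewner condition in $X$ forces $\Mod_2\Gamma(E_0,F;X) > 0$, while the image family $f(\Gamma(E_0,F;X))$ consists of curves through the single point $y_0$, whose $2$-modulus in the finite-area space $Y$ is zero. This contradicts the weak quasiconformality bound $\Mod_2\Gamma \le K \Mod_2 f(\Gamma)$, so $f$ is injective, and hence a homeomorphism between compact Hausdorff spaces.

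\textbf{Stage 2 (weak quasisymmetry).} The central step is to produce $H \ge 1$, depending only on $K$, $L$, and the constants in \ref{smooth:loewner} and \ref{smooth:llc-mod}, such that $d_X(a,b) \le d_X(a,c)$ implies $d_Y(f(a),f(b)) \le H \cdot d_Y(f(a),f(c))$ for every distinct triple. Arguing by contradiction, assume $t := d_Y(f(a),f(b)) > Hs$ with $s := d_Y(f(a),f(c))$, and use pigeonhole on the three-point condition to pick $k\in\{1,2,3\}$ with $d_X(a,x_k) \gtrsim L^{-1}\diam(X)$ and $d_Y(f(a),f(x_k)) \gtrsim L^{-1}\diam(Y)$. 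Via LLC$_1$ of $Y$, build a continuum $E' \subset B_Y(f(a), M_Y s)$ containing $f(a)$ and $f(c)$; via LLC$_2$, build a continuum $F' \subset Y \setminus B_Y(f(a), t/(2M_Y))$ containing $f(b)$ and $f(x_k)$. For $H$ large, $E'$ and $F'$ are separated by an annulus to which \ref{smooth:modulus} applies, giving $\Mod_2\Gamma(E',F';Y) \le M'$. Pulling back via weak quasiconformality yields $\Mod_2\Gamma(E,F;X) \le KM'$, where $E=f^{-1}(E')$ and $F=f^{-1}(F')$. A direct contradiction via the Loewner lower bound is blocked by the fact that $\diam(E) \ge d_X(a,c)$ may be much smaller than $\diam(X) \gtrsim \diam(F)$, so the relative distance $\Delta(E,F)$ is not a priori bounded. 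The remedy is a telescoping construction across dyadic scales $2^j s$ in $Y$: at each scale one extracts a definite positive modulus contribution in $X$ via Loewner, and the $\log(H/C)$ intermediate scales between $s$ and $t$ accumulate enough total modulus to exceed $KM'$ once $H$ is large, giving the desired contradiction.

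\textbf{Stage 3 (upgrade) and main obstacle.} With $f$ weakly $H$-quasisymmetric and $X$ connected and doubling, Heinonen's criterion (\emph{Lectures on Analysis on Metric Spaces}, Theorem 10.19) upgrades $f$ to an $\eta$-quasisymmetric homeomorphism with $\eta$ depending only on $H$ and the doubling constant of $X$, completing the proof with the quantitative dependence claimed. The main obstacle is the telescoping in Stage 2: exchanging the Loewner lower bound (sensitive to the relative distance $\Delta(E,F)$ in $X$) against the scale-invariant annular modulus upper bound in $Y$ requires careful bookkeeping across scales, and plays the role that Ahlfors $2$-regularity plays in the classical Heinonen--Koskela treatment of the smooth case.
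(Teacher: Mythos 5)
Your overall scheme (homeomorphism, then weak quasisymmetry via LLC continua and modulus comparison, then Heinonen's Theorem 10.19 using doubling) is the standard one that the paper follows, but two steps as written are genuinely flawed. In Stage 1, the assertion that the family of curves through the single point $y_0$ has zero $2$-modulus ``in the finite-area space $Y$'' is false in general: finite Hausdorff $2$-measure does not prevent a point from carrying positive modulus, and this is precisely the phenomenon behind the non-reciprocal quasispheres of finite area discussed in the paper. You must invoke hypothesis \ref{smooth:llc-mod}\ref{smooth:modulus} on $Y$, telescoped across scales as in Lemma \ref{lemma:modulus_log}, to conclude that curves of definite diameter through $y_0$ have modulus $O\bigl((\log(R/r))^{-1}\bigr)\to 0$; only then does your Loewner-versus-image-modulus contradiction go through. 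The conclusion is true under the stated hypotheses, so this is fixable, but the justification you give is wrong and skips the one ingredient that makes the theorem nontrivial for non-smooth $Y$.

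In Stage 2 the difficulty is misdiagnosed and the proposed remedy would not work. The relative distance $\Delta(E,F)=\dist(E,F)/\min\{\diam(E),\diam(F)\}$ \emph{is} a priori bounded: $\dist(E,F)\leq d_X(a,b)\leq d_X(a,c)\leq \diam(E)$, and choosing $x_k$ (by pigeonhole among the three normalization points) far from both $a$ and $b$ gives $\diam(F)\geq d_X(b,x_k)\gtrsim_L \diam(X)\geq \dist(E,F)$, so $\Delta(E,F)\leq C(L)$ and the Loewner bound applies directly, once, yielding $\Mod_2\Gamma(E,F;X)\geq \phi(C(L))>0$. The real issue is that a single application of \ref{smooth:llc-mod}\ref{smooth:modulus} only gives the fixed upper bound $M'$ on the $Y$-side, which is no contradiction; the quantitative mechanism is that the annulus modulus in $Y$ decays like $(\log(t/s))^{-1}$ as $H=t/s\to\infty$, which is exactly Lemma \ref{lemma:modulus_log} (the serial rule applied to the nested annuli in $Y$). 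Your telescoping is placed on the wrong side: ``extracting a definite positive modulus contribution in $X$ via Loewner at each scale'' and summing cannot produce a lower bound for $\Mod_2\Gamma(E,F;X)$, because lower bounds for the moduli of subfamilies supported in disjoint annuli do not add up to a lower bound for the family of curves crossing all of them --- the serial/overflow inequalities for modulus give upper bounds, not lower bounds, in that situation. Once you move the telescoping to $Y$ (i.e., use Lemma \ref{lemma:modulus_log}) and note that $\Delta(E,F)$ is bounded, the contradiction $\phi(C(L))\leq K\cdot C(\log H)^{-1}$ for large $H$ is immediate, and Stage 3 then goes through (noting that Heinonen's upgrade also requires $Y$ doubling, which \ref{smooth:llc-mod}\ref{smooth:llc} provides).
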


To prove this, we use the following consequence of \ref{smooth:llc-mod}\ref{smooth:modulus}.

\begin{lemma}\label{lemma:modulus_log}
Let $Y$ be a metric surface of locally finite Hausdorff $2$-measure. Suppose that there exist constants $L>1$ and $M>0$ such that for every ball $B(a,r)\subset X$ we have 
$$\Mod_2 \Gamma(\br B(a,r), Y\setminus B(a,L r);Y) <M.$$
Then for each $a\in Y$, $r>0$ and $R\geq Lr$ we have
$$\Mod_2\Gamma(\br B(a,r) , Y\setminus B(a,R);Y)\leq C(L,M) \left(\log \frac{R}{r}\right)^{-1}.$$
\end{lemma}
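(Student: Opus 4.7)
The plan is to chain together the single-scale annulus modulus bound via the classical series law for conformal modulus, which automatically produces logarithmic decay.

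\medskip
\noindent\emph{Setup.} I would set $n := \lfloor \log_L(R/r) \rfloor$, which is $\geq 1$ since $R \geq Lr$, and consider the concentric balls $B(a, L^k r)$ for $k = 0, 1, \ldots, n$, noting that $B(a, L^n r) \subset B(a, R)$. Applying the hypothesis at scale $L^k r$ yields
$$\Mod_2 \Gamma_k < M, \qquad \Gamma_k := \Gamma\bigl(\br B(a, L^k r),\, Y \setminus B(a, L^{k+1} r);\, Y\bigr),$$
for $k = 0, \ldots, n-1$.

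\medskip
\noindent\emph{Series law.} Every curve $\gamma$ in the target family $\Gamma := \Gamma(\br B(a,r), Y \setminus B(a,R); Y)$ crosses each of the pairwise disjoint open annular shells $A_k := B(a, L^{k+1} r) \setminus \br B(a, L^k r)$ for $k = 0, \ldots, n-1$. Specifically, the portion of $\gamma$ between its last visit to $\br B(a, L^k r)$ and the first subsequent exit from $B(a, L^{k+1} r)$ is a subcurve $\gamma_k \in \Gamma_k$ whose interior lies in $A_k$. Since the $A_k$ are pairwise disjoint, the standard series law applies: for each $\varepsilon>0$ I pick an admissible density $\sigma_k$ for $\Gamma_k$, arranged so that $\sigma_k$ is supported in $A_k$ and $\int_Y \sigma_k^2\, d\mathcal H^2 < M$, and set $\sigma := \sigma_0 + \cdots + \sigma_{n-1}$. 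Then $\int_\gamma \sigma \, ds \geq n$ for every $\gamma \in \Gamma$, while disjointness of supports gives
$$\int_Y \sigma^2 \, d\mathcal H^2 = \sum_{k=0}^{n-1} \int_Y \sigma_k^2 \, d\mathcal H^2 < nM.$$
Thus $\sigma/n$ is admissible for $\Gamma$, and $\Mod_2 \Gamma \leq M/n$.

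\medskip
\noindent\emph{Conclusion and main technical point.} Since $R \geq Lr$, a short case check (separating $\log_L(R/r) \in [1,2)$ from $\log_L(R/r) \geq 2$) gives $n \geq \tfrac{1}{2} \log_L(R/r) = \tfrac{\log(R/r)}{2 \log L}$, so $\Mod_2 \Gamma \leq (2M\log L)/\log(R/r)$ and the lemma holds with $C(L,M) := 2M\log L$. There is no substantial obstacle: the argument is a routine application of the series law. The only technical point is that the densities $\sigma_k$ can be taken with supports in the open shells $A_k$ while remaining admissible for the subcurves $\gamma_k$; this is standard because each $\gamma_k$ has its interior in $A_k$ and its two endpoints on $\partial A_k$ contribute a set of arc length zero to the path integrals.
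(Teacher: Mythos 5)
Your proof is correct and follows essentially the same route as the paper's: both superpose admissible densities for the single-scale annulus families, restricted to the pairwise disjoint shells, and divide by the number $\asymp \log_L(R/r)$ of shells, yielding the same constant $2M\log L$. The only (immaterial) difference is that you index the shells outward from radius $r$ while the paper indexes them inward from radius $R$.
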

\begin{proof}
Let $N\in \N$ such that $L^Nr\leq R$ and $L^{N+1}r>R$. By assumption we have
$$\Mod_2 \Gamma_k<M,$$
where $\Gamma_k=\Gamma( \br B(y,L^{-k}R) , Y\setminus B(y, L^{-k+1}R); Y)<M$
for $k\in \{1,\dots,N\}$. For $k\in \{1,\dots,N\}$, let $\rho_k\colon Y\to[0,\infty]$ be a Borel function that is admissible for $\Gamma_k$ such that $\|\rho\|_{L^2(Y)}^2<M$. Let $A_k=B(y,L^{-k+1}R)\setminus \br B(y,L^{-k}R)$ and define 
$$\rho= \frac{1}{N}\sum_{k=1}^N \rho_k\chi_{A_k}.$$
Each curve $\gamma\in \Gamma=\Gamma(\br B(a,r) , Y\setminus B(a,R);Y)$ has a subcurve $\gamma_k\in \Gamma_k$ whose trace lies in $A_k$ except for the endpoints. Thus, $\int_{\gamma_k}\rho_k\chi_{A_k}\, ds\geq 1$ for $k\in \{1,\dots,N\}$ and
$$\int_{\gamma}{\rho}\, ds \geq \frac{1}{N} \sum_{k=1}^N \int_{\gamma_k}\rho_k\chi_{A_k}\, ds\geq 1.$$
This shows that $\rho$ is admissible for $\Gamma$ and
\begin{align*}
\Mod_2\Gamma \leq \int \rho^2\, d\mathcal H^2= \frac{1}{N^2} \sum_{k=1}^N \int_{A_k} \rho_k^2\, d\mathcal H^2 \leq \frac{M}{N}\leq 2M\log L \left( \log \frac{R}{r}\right)^{-1}.
\end{align*}
This completes the proof. 
\end{proof}

\begin{proof}[Proof of Theorem \ref{theorem:upgrade}]
We first show that $f$ is a homeomorphism. By \ref{smooth:llc-mod}\ref{smooth:modulus}  and Lemma \ref{lemma:modulus_log}, for each $y\in Y$ and $\delta>0$, the $2$-modulus of the family of curves with diameter at least $\delta$ that pass through $y$ is zero. By the subadditivity of modulus, for each $y\in Y$ the family of non-constant curves passing through $y$ has $2$-modulus zero. Now, based on this property, \cite{NtalampekosRomney:length}*{Theorem 7.4} implies that the weakly quasiconformal map $f$ must be a homeomorphism.

The argument to show that $f$ is quasisymmetric is quite standard and can be found in \cite{LytchakWenger:parametrizations}*{Theorem 2.5} or \cite{Rajala:uniformization}*{Proof of Corollary 1.7}, so we omit it and we restrict ourselves to some comments. In these results the space $Y$ is assumed to be Ahlfors $2$-regular and condition \ref{smooth:llc-mod}\ref{smooth:modulus} is derived as a consequence. In our setting, the three-point normalizations on $X$ and $Y$, the assumption that $X$ is $2$-Loewner, and the assumptions that $Y$ is $\llc$ as in \ref{smooth:llc-mod}\ref{smooth:llc} and satisfies \ref{smooth:llc-mod}\ref{smooth:modulus} (or equivalently the conclusion of Lemma \ref{lemma:modulus_log}) imply that the weakly quasiconformal homeomorphism $f$ is \textit{weakly quasisymmetric} (see \cite{Heinonen:metric}*{Section 10} for the definition). Then the doubling assumptions on $X$ and $Y$ imply that $f$ is actually quasisymmetric. 
\end{proof}

\begin{proof}[Proof of Theorem \ref{theorem:bac}]
Let $X$ be a metric $2$-sphere of finite Hausdorff $2$-measure. Suppose that \ref{smooth:llc-mod} is true. By Theorem \ref{theorem:wqc} there exists a weakly $(4/\pi)$-quasicon\-formal map $h\colon \widehat\C\to X$. By Theorem \ref{theorem:upgrade}, $h$ is a homeomorphism. We fix three points $x_i\in X$, $i\in \{1,2,3\}$, with $d_X(x_i,x_j)\geq \diam(X)/2$ for $i\neq j$. We may precompose $h$ with a M\"obius transformation of $\widehat{\C}$ so that the preimages $h^{-1}(x_i)$, $i\in \{1,2,3\}$, have mutual distances at least $\diam (\widehat{\C})/2$ for $i\neq j$. Note that $\widehat \C$ is a doubling and $2$-Loewner space \cite{Heinonen:metric}*{Chapter 8}, so it satisfies \ref{smooth:loewner}. By Theorem \ref{theorem:upgrade}, the normalized map $h$ is quasisymmetric, quantitatively. This proves that \ref{smooth:quasisphere} is true. 

Next, we assume that there exists an $\eta$-quasisymmetric map $h\colon \widehat \C\to X$ for some distortion function $\eta$ as in \ref{smooth:quasisphere}. This implies that $X$ is doubling \cite{Heinonen:metric}*{Theorem 10.18}, quantitatively. We show that $X$ is a $2$-Loewner space, quantitatively. By Theorem \ref{theorem:tyson}, $h$ is weakly $K$-quasiconformal, where $K$ depends only on $\eta$. Let $E',F'\subset X$ be disjoint continua and set $E=h^{-1}(E')$, $F=h^{-1}(F')$. Since $\widehat \C$ is a $2$-Loewner space, there exists a decreasing function $\phi\colon (0,\infty)\to (0,\infty)$ such that 
$$ \Mod_2 \Gamma(E,F;\widehat \C) \geq \phi(t),$$
where $t=\Delta(E,F)$. It is elementary to show from the definition of a quasisymmetric map that the relative distance $t'\coloneqq \Delta(E',F')$ satisfies $t\leq \eta'(2t')$, where $\eta'(s)=1/\eta^{-1}(s^{-1})$, $s>0$; see \cite{Rehmert:thesis}*{Proposition 2.1.9} for an argument. Therefore, 
$$ \Mod_2\Gamma(E',F';X) \geq K^{-1}\Mod_2 \Gamma(E,F;\widehat \C) \geq K^{-1}\phi(t) \geq K^{-1}\phi(\eta'(2t')).$$
This completes the proof that $X$ satisfies \ref{smooth:loewner}.
\end{proof}

For the proof of Theorem \ref{theorem:qc_qs} we will use the following result of Williams.

\begin{theorem}[\cite{Williams:qc}*{Theorem 1.2}] \label{theorem:williams}
Let $X,Y$ be separable metric measure spaces with locally finite Borel regular outer measures $\mu,\nu$, respectively, such that $\nu$ is doubling. Let $f\colon X\to Y$ be a homeomorphism and $Q>1$. The following are quantitatively equivalent. 
\begin{enumerate}[label=\normalfont(\roman*)]
	\item There exists $K>0$ such that for every family of curves $\Gamma$ in $X$ we have
	$$\Mod_Q\Gamma\leq K \Mod_Q f(\Gamma).$$
	\item\label{theorem:williams:ii}  There exist $L>1$ and $H>0$ such that for all $y\in Y$ we have
	$$\liminf_{r\to 0^+} \frac{\Mod_Q f^{-1}(\Gamma(\br B(y,r), Y\setminus B(y,Lr);Y))  }{\nu(B(y,r)) r^{-Q}} \leq H.$$
\end{enumerate}
\end{theorem}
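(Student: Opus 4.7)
The plan is to prove the two implications separately; (i) $\Rightarrow$ (ii) is a short modulus estimate while (ii) $\Rightarrow$ (i) is the substantive direction and requires a Vitali-type covering argument.

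For (i) $\Rightarrow$ (ii), fix $y \in Y$ and $r > 0$. The test function $\rho_0 = \frac{1}{(L-1)r}\chi_{B(y,Lr)\setminus B(y,r)}$ is admissible for $\Gamma(\br B(y,r), Y\setminus B(y,Lr); Y)$, so
$$\Mod_Q \Gamma(\br B(y,r), Y\setminus B(y,Lr); Y) \leq \frac{\nu(B(y,Lr))}{((L-1)r)^Q} \lesssim \frac{\nu(B(y,r))}{r^Q},$$
with the final bound coming from the doubling property of $\nu$. Hypothesis (i) applied to the family $f^{-1}(\Gamma(\br B(y,r), Y\setminus B(y,Lr); Y))$ then yields the quotient bound of (ii) for \emph{every} $r>0$, which is stronger than the liminf condition.

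For (ii) $\Rightarrow$ (i), fix a curve family $\Gamma$ in $X$ and an admissible function $\sigma \colon Y \to [0,\infty]$ for $f(\Gamma)$ with $\|\sigma\|_{L^Q(\nu)}$ finite. Let $\epsilon>0$. For each $y \in Y$, hypothesis (ii) combined with Lebesgue differentiation of $\sigma^Q\,d\nu$ allows one to select arbitrarily small radii $r_y>0$ with
$$\Mod_Q f^{-1}(\Gamma_y^{r_y}) \leq (H+\epsilon)\frac{\nu(B(y,r_y))}{r_y^Q} \quad \text{and} \quad \frac{1}{\nu(B(y,Lr_y))}\int_{B(y,Lr_y)}\sigma^Q\, d\nu \leq \sigma(y)^Q+\epsilon,$$
where $\Gamma_y^r = \Gamma(\br B(y,r), Y \setminus B(y,Lr); Y)$. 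The $5r$-covering lemma (available because $\nu$ is doubling) extracts a disjoint subcollection $\{B(y_i, r_i)\}$ whose fivefold enlargements cover $Y$. For each $i$ choose $\rho_i \colon X \to [0,\infty]$ admissible for $f^{-1}(\Gamma_{y_i}^{r_i})$ with $\|\rho_i\|_{L^Q(\mu)}^Q \leq (H+2\epsilon)\nu(B(y_i,r_i))/r_i^Q$, and set $\rho = \sum_i \alpha_i \rho_i$ for weights $\alpha_i$ calibrated to the $L^Q$-mean of $\sigma$ on $B(y_i,Lr_i)$. One then verifies admissibility of $\rho$ for $\Gamma$ by a chaining argument along the cover, and bounds $\|\rho\|_{L^Q(\mu)}^Q$ via H\"older's inequality together with the bounded-overlap property (following from doubling) of the enlarged cover.

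The main obstacle is the simultaneous calibration of the weights $\alpha_i$ and the verification that admissibility of $\sigma$ along $f(\gamma)$ transfers, via the chaining, to admissibility of $\rho$ along $\gamma$. This amounts to proving that, for almost every $\gamma \in \Gamma$, one has $\int_\gamma \rho\, ds \gtrsim \int_{f(\gamma)}\sigma\, ds \geq 1$, with a universal constant. The Lebesgue-point selection of the radii $r_i$ ensures that the compounded near-admissibility constants close up quantitatively, leading to a final constant $K$ in (i) depending only on $H$, $L$, $Q$, and the doubling constant of $\nu$.
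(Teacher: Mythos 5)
First, note that the paper does not prove this statement: it is quoted from Williams \cite{Williams:qc}*{Theorem 1.2} and used as a black box in the proof of Theorem \ref{theorem:qc_qs}, so your argument can only be judged on its own terms. Your implication (i) $\Rightarrow$ (ii) is correct: the test function $((L-1)r)^{-1}\chi_{B(y,Lr)\setminus B(y,r)}$ together with the doubling property of $\nu$ bounds $\Mod_Q \Gamma(\br B(y,r), Y\setminus B(y,Lr);Y)$ by $C\nu(B(y,r))r^{-Q}$, and applying (i) to the preimage family gives (ii) for every $r$.

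The direction (ii) $\Rightarrow$ (i) has a genuine gap, located exactly where you flag ``the main obstacle'': the admissibility of $\rho=\sum_i\alpha_i\rho_i$. From the construction, all a curve $\gamma\in\Gamma$ yields is $\int_\gamma\rho\,ds\geq\sum_{i\in I(\gamma)}\alpha_i$, where $I(\gamma)$ indexes the balls whose annuli $f(\gamma)$ crosses; with your calibration $\alpha_i\simeq r_i\bigl(\nu(B_i^*)^{-1}\int_{B_i^*}\sigma^Q\,d\nu\bigr)^{1/Q}$, admissibility would require the Riemann-sum inequality $\sum_{i\in I(\gamma)}\alpha_i\gtrsim\int_{f(\gamma)}\sigma\,ds$. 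This compares an arclength integral of $\sigma$ over the trace of $f(\gamma)$ --- a $\nu$-null set in general --- with solid averages of $\sigma$ over balls, and it is simply false for an arbitrary admissible $\sigma$: make $\sigma$ enormous on $|f(\gamma)|$ and small elsewhere, and the averages never see it. The Lebesgue-point selection of radii does not repair this, because Lebesgue points control the averages by the pointwise values (the direction needed for your norm estimate), not the values of $\sigma$ along a fixed curve by the averages. The inequality can only be salvaged for $\Mod_Q$-almost every curve in $Y$ (e.g.\ via Fuglede's lemma applied to the discretized averages of $\sigma$), and transporting the resulting exceptional family of zero modulus in $Y$ to a zero-modulus family in $X$ is essentially the inequality (i) you are trying to prove. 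This circularity is why the known proof does not run a direct Vitali-plus-chaining argument but instead passes through the analytic characterization: condition (ii) is upgraded to a Newtonian--Sobolev upper-gradient inequality for the map by a maximal-function argument, and the full modulus inequality then follows from the upper-gradient machinery. Two further unaddressed points: curves with $\diam f(\gamma)$ below the mesh of your cover have $I(\gamma)=\emptyset$, so one must exhaust $\Gamma$ by subfamilies with image-diameter bounded below and send the mesh to zero with uniform constants; and the $5r$-covering lemma guarantees only that the enlarged balls cover $f(\gamma)$, not that $f(\gamma)$ enters the cores $\br B(y_i,r_i)$, which is what membership in $\Gamma_{y_i}^{r_i}$ requires, so the chain must be built from balls centered on the curve, at which point the disjointness underlying your bounded-overlap norm estimate is lost.
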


\begin{proof}[Proof of Theorem \ref{theorem:qc_qs}]
Let $h\colon \widehat{\C}\to X$ be an $\eta$-quasisymmetry for some distortion function $\eta$. Since $h^{-1}$ is $\eta'$-quasisymmetric, where $\eta'$ depends only on $\eta$, for each ball $B(a,r)\subset X$ there exist $r_1>0$ and $L_1>1$, where $\eta'(1/L_1)=1/2$, such that 
\begin{align}\label{theorem:qc_qs:qs_inverse}
h^{-1}(B(a,r)) \subset B(h^{-1}(a),r_1) \subset B(h^{-1}(a), 2r_1) \subset h^{-1}(B(a,L_1r))
\end{align}
and given $L>1$, there exist exist $r_2>0$ and $L_2>1$, where $L_2=\eta'(L)$, such that 
\begin{align}\label{theorem:qc_qs:qs_inverse2}
B(h^{-1}(a),r_2) \subset h^{-1}(B(a,r)) \subset h^{-1}(B(a,Lr)) \subset B(h^{-1}(a), L_2r_2).
\end{align}

Suppose that $h$ is $K$-quasiconformal for some $K\geq 1$ as in part \ref{qc_qs:1} of the theorem. Thus, for each ball $B(a,r)\subset X$, by \eqref{theorem:qc_qs:qs_inverse} there exists $L_1>0$ that depends only on $\eta$  and there exists $r_1>0$ such that
\begin{align*}
\Mod_2 \Gamma( \br B(a,r), & X\setminus B(a,L_1r);X)\\
&\leq K\Mod_2 \Gamma( h^{-1}(\br B(a,r), \widehat{\C}\setminus h^{-1}(B(a,L_1r)) ;\widehat{\C})\\
&\leq K \Mod_2 \Gamma( \br B(h^{-1}(a), r_1), \widehat{\C} \setminus B(h^{-1}(a),2r_1); \widehat{\C}) < C
\end{align*}
where the latter bound is uniform and follows from \cite{Heinonen:metric}*{Lemma 7.18}. This proves \ref{qc_qs:3}. Note that \ref{qc_qs:3} trivially implies \ref{qc_qs:2}.

Finally, assume that there exist constants $L>1$ and $M>0$ such that 
$$\liminf_{r\to 0^+}\Mod \Gamma(\br B(a,r), X\setminus B(a,L r);X) <M$$
for every $a\in X$, as in \ref{qc_qs:2}. By \eqref{theorem:qc_qs:qs_inverse2}, there exists $L_2>1$ that depends only on $\eta,L$ and there exists $r_2>0$  such that
\begin{align*}
\Mod_2 h(\Gamma( \br B(h^{-1}(a), r_2), &\widehat{\C}\setminus B(h^{-1}(a), L_2r_2);\widehat{\C})) \\
&\leq \Mod_2 h(\Gamma( h^{-1}(\br B(a,r)), \widehat{\C}\setminus h^{-1}(B(a,Lr)); \widehat{\C}))\\
&=\Mod_2  \Gamma(\br B(a,r), X\setminus B(a,L r);X).
\end{align*}
By \eqref{theorem:qc_qs:qs_inverse2} we note that $r_2\to 0$ as $r\to 0$. Taking liminf as $r\to 0$ shows that condition \ref{theorem:williams:ii} of Theorem \ref{theorem:williams} is satisfied for the map $h^{-1}$. Therefore, there exists $K>0$ such that
\begin{align*}
\Mod_2 h(\Gamma)\leq K\Mod_2 \Gamma 
\end{align*}
for every family of curves $\Gamma$ in $\widehat{\C}$. By Lemma \ref{lemma:wqc_qc} we conclude that $h$ is quasiconformal, quantitatively, as required in \ref{qc_qs:1}.
\end{proof}

\begin{proof}[Proof of Theorem \ref{theorem:reciprocal}]
Let $X$ be a reciprocal metric $2$-sphere. Suppose that $X$ is a quasisphere as in \ref{smooth:quasisphere}. We will show that \ref{smooth:llc-mod} is true. Note that $X$ is doubling \cite{Heinonen:metric}*{Theorem 10.18} and it is elementary to show from the definitions that a metric space that is quasisymmetric to an $\llc$ space is also $\llc$; see \cite{Rehmert:thesis}*{Proposition 2.1.7} for an argument. Let $h\colon \widehat{\C}\to X$ be a quasisymmetric homeomorphism. By Theorem \ref{theorem:tyson} and Lemma \ref{lemma:wqc_qc}, $h$ is quasiconformal, quantitatively. By Theorem \ref{theorem:qc_qs}, condition \ref{smooth:llc-mod}\ref{smooth:modulus} is true.

Next, suppose that $X$ is doubling and is a $2$-Loewner space, as in \ref{smooth:loewner}. Consider a quasiconformal map $h\colon \widehat{\C}\to X$, provided by Rajala's theorem \cite{Rajala:uniformization}. Note that the map $h^{-1}\colon X\to \widehat{\C}$ is a quasiconformal map from a space satisfying \ref{smooth:loewner} onto a space satisfying \ref{smooth:llc-mod}. We can normalize this map appropriately via postcomposition with a M\"obius transformation of $\widehat \C$, as required in the assumptions of Theorem \ref{theorem:upgrade}. As a consequence, $h^{-1}$ is quasisymmetric, quantitatively.
\end{proof}

\bibliography{../../biblio} 

\end{document}